\documentclass[12pt]{amsart}
\usepackage[top=1in, bottom=1in, left=1in, right=1in]{geometry}
\usepackage{dsfont}
\usepackage{amsmath}
\usepackage{amsthm}
\usepackage{amssymb}
\usepackage{arydshln} 
\usepackage{mathtools}
\usepackage{times}
\usepackage[utf8]{inputenc}
\usepackage{indentfirst}  
\usepackage{graphicx}
\usepackage{color}
\usepackage[colorlinks=true]{hyperref}
\hypersetup{colorlinks=true, citecolor=green, linkcolor=green, filecolor=magenta, urlcolor=cyan}
\usepackage[shortlabels]{enumitem}
\usepackage{mdframed}
\usepackage{tcolorbox}
\newtcolorbox{mybbox}{colback=blue!5!white,
colframe=blue!75!black}
\newtcolorbox{mygbox}{colback=green!5!white,
colframe=green!75!black}
\newtcolorbox{myrbox}{colback=red!5!white,
colframe=red!75!black}
\usepackage{hyperref}
\hypersetup{
  colorlinks   = true, 
  urlcolor     = blue, 
  linkcolor    = blue, 
  citecolor   = purple 
}
\usepackage[textwidth=30mm]{todonotes}
\usepackage{float}
\usepackage{adjustbox}

\numberwithin{equation}{section}

\newtheorem{thm}{Theorem}[section]
\newtheorem{prop}[thm]{Proposition}
\newtheorem{Rem}[thm]{Remark}
\newtheorem{lem}[thm]{Lemma}
\newtheorem{cor}[thm]{Corollary}
\theoremstyle{definition}
\newtheorem{defn}[thm]{Definition}
\theoremstyle{definition}

\theoremstyle{plain}

\theoremstyle{remark}

\newtheorem*{remark*}{Remark}

\newenvironment{feqn*}{\begin{mdframed}\begin{equation*}}{\vspace{1mm}
\end{equation*}\end{mdframed}}

\newcommand{\N}{\mathbb{N}}

\newcommand{\bs}\boldsymbol{}

\renewcommand{\geq}{\geqslant}
\renewcommand{\leq}{\leqslant}
\renewcommand{\hat}{\widehat}
\renewcommand{\tilde}{\widetilde}
\renewcommand{\bar}[1]{\overline{#1}}

\renewcommand{\Re}{{\rm Re}}
\renewcommand{\Im}{{\rm Im}}

\definecolor{blue}{rgb}{.2,.6,.75}
\definecolor{green}{rgb}{.4,.7,.4}
\definecolor{red}{rgb}{1,0,0}

\begin{document}

\title[Explicit bounds for the prime number theorem]{Explicit bounds for the prime number theorem}

\author{Andrew Fiori and Mikko Jaskari}

\address{Department of Mathematics and Statistics, University of Lethbridge, Canada}

\email{andrew.fiori@uleth.ca}

\thanks{Andrew Fiori thanks and acknowledges the University of Lethbridge for their financial support as well as the support of NSERC Discovery Grant RGPIN-2020-05316.}

\address{Department of Mathematics and Statistics, University of Turku, 20014 Turku, Finland}

\email{mikko.m.jaskari@utu.fi}

\thanks{Mikko Jaskari thanks and acknowledges the Research Council of Finland grants no. 346307, 364214 and 370133 and the University of Turku Graduate School UTUGS for their financial support.}


\date{\today}

\begin{abstract}
We study the problem of finding tight explicit bounds on sums over zeros of $L$-functions near the $1$-line with a variety of zero-free regions and zero-density estimates.
As an application to demonstrate the techniques, we apply them to the case of the Riemann zeta function and the prime number theorem to obtain new best estimates on the error terms in the prime number theorem. We show that for $x\geq21$ we have $|\psi(x)-x| < 0.239 x \exp\left(-0.1982767\left(1+\frac{\log\log\log(x)}{15\log\log(x)}\right) \frac{\log(x)^{3/5}}{\log\log(x)^{1/5}}\right).$
\end{abstract}
\maketitle

\section{Introduction}
The Chebyshev prime counting function
\begin{align*}
\psi(x) = \sum_{p^n \leq x} \log(p)
\end{align*}
and the study of its bounds have been important topics in number theory ever since the proof of the prime number theorem. The prime number theorem states that
$\psi(x) \sim x$
and in terms of the error term
\begin{align}
E_{\psi}(x) = \left| \frac{\psi(x)-x}{x} \right|,
\end{align}
this is equivalent to
\begin{align}
\lim_{x\to \infty}E_{\psi}(x) = 0.
\end{align}
The well-known von Mangoldt formula, valid for $x>1$ not a prime power,
\begin{align*}
\psi(x) = x - \sum_{\substack{\rho \\ \zeta(\rho)=0, \\ 0<\Re(\rho)<1}} \frac{x^{\rho}}{\rho} -\frac{1}{2}\log\left(1-x^{-2}\right)-\log(2\pi),
\end{align*}
where the sum is over the nontrivial zeros of the Riemann zeta function and is understood as $\lim_{T\to\infty}\sum_{|\Im(\rho)|<T}$ (it is not absolutely convergent). This formula expresses the relation between the Chebyshev function and the nontrivial zeros of the Riemann zeta function. Hence in order to understand the bounds of $E_{\psi}(x)$ we essentially have to bound the sum
\begin{align*}
\sum_{\substack{\rho \\ \zeta(\rho)=0, \\ 0<\Re(\rho)<1}} \frac{x^{\rho-1}}{\rho}.
\end{align*}
In 1899, de la Vallée Poussin proved that, for some $c'>0$,
\begin{align*}
E_{\psi}(x)=O(\exp(-c'\sqrt{\log(x)}))
\end{align*}
by using his zero-free region
\begin{align*}
\zeta(s) \not= 0 \textrm{ for } \Re(s) > 1-\frac{1}{R'\log(|\Im(s)|+2)}, \textrm{ for some } R'>0.
\end{align*}
The best current explicit version of this result is due to Fiori, Kadiri and Swidinsky \cite{FKS1}, where the bound
\begin{align}\label{eq:fksresult}
E_{\psi}(x) < 9.22022(\log(x))^{3/2}\exp(-0.8476836\sqrt{\log(x)}) \textrm{ for }  x >2
\end{align}
was obtained. 
We note that there have been subsequent improvements to both the zero-free region (see  \cite{MTYZFR} and \cite{BTY-ZFR}) and zero-density (see \cite{GF-Thesis}) results used.

More generally, in \cite{ING} it was shown that a zero-free region
\[  \zeta(s)\not=0 \textrm{ for } \Re(s) > 1 - v(\log(|\Im(s)|)) \]
leads, for every $\epsilon>0$, to an error term
\[E_{\psi}(x)=O(\exp((-1/2+\epsilon)\omega(x))) \]
where $\omega(x) = \min_{u\geq  w_0}(v(u)\log(x)+u)$, the minimum being taken over the range $u\geq w_0$ on which the zero-free region is valid (and $v$ is defined and positive).
For example, for the Korobov-Vinogradov zero-free region
\[ v(u) = cu^{-2/3}\log(u)^{-1/3} \]
one has
\[ \omega(x) \sim  \left(\frac{c^3 5^6}{2^2 3^4}\right)^{1/5} \frac{\log(x)^{3/5}}{\log\log(x)^{1/5}}. \]

In \cite{PIN} it was shown that incorporating zero-density estimates can improve the bound, again for every $\epsilon>0$, to
\[E_{\psi}(x)=O(\exp((-1+\epsilon)\omega(x))). \]
An explicit version of this bound for a Korobov-Vinogradov type zero-free region can be found in  \cite{JY-PNT}, where the authors obtain explicitly:
\begin{align} E_{\psi}(x) < 0.026 (\log(x))^{1.801}\exp(-0.1853 (\log(x))^{3/5}\log\log(x)^{-1/5} ) \textrm{ for } x \geq 23. \label{JYprevious} \end{align}
The problem of removing the $\epsilon$ from the error terms asymptotically has been examined very recently in 
\cite{JZDET} and \cite{BellottiNewZFR}. In particular they work to obtain results of the form 
\[E_{\psi}(x)=O(\exp(-\omega(x))). \]
The work of \cite{JZDET} explains how this relates to zero-density results, whereas \cite{BellottiNewZFR} establishes a zero-density result which would give this.

Recognizing that a bound of the form given in \cite{JY-PNT} is cleaner and likely easier to use in applications, in this article we take a slightly different approach from that of either \cite{JY-PNT} or \cite{BellottiNewZFR}. That is, in the context of Korobov-Vinogradov type zero-free regions we are specifically aiming for a fully explicit bound
\[ E_{\psi}(x)=O\left(\exp\left(-\left(\frac{c^3 5^6}{2^2 3^4}\right)^{1/5} \frac{\log(x)^{3/5}}{\log\log(x)^{1/5}}\right)\right).\]
Our also establishes tighter, explicit bounds of the shape predicted by \cite{JZDET}, that is, for some explicit $q>0$, of the form
\[ E_{\psi}(x)=O\left(\omega(x)^{q}\exp(-\omega(x))\right). \]
We expect these to be used less in future applications since $\omega(x)$ in general has no closed form.

The main technical contribution of this paper is to provide explicit and effective bounds on sums over zeros of $L$-functions near the $1$-line under the hypothesis of having both a zero-free region and a zero-density estimate. These sums arise in many different applications of explicit formulas, for many classes of $L$-functions, and we aim for our results to be directly applicable in these applications even though we illustrate them primarily with bounds for $\psi(x)$.
In this article we treat fairly general forms of both zero-free regions and zero-density estimates including in particular the forms which have been established for large classes of functions.

The sums we consider are of the form
\begin{align} \sum_{\rho \in \mathcal{R}} \frac{x^{\beta-1}}{ |\rho|^m} \end{align}
for $m>0$, some region $\mathcal{R}$ as below, and where we write $\rho=\beta+it$ for the zeros and restrict to positive ordinates $t>0$ (the contribution of the conjugate zeros being identical in absolute value).

The regions that need to be considered have parameters depending on the precise sum being considered and are typically of the form:
\begin{enumerate}
\item  $\mathcal{R}_1=\{ \rho = \beta + it\;|\; \beta > \sigma_1(x),\; 0<t<T(x) \}$;
\item $\mathcal{R}_2= \{ \rho = \beta + it\;|\; \beta > \sigma_0,\;t > T(x) \}$;
\item $\mathcal{R}_3=  \{ \rho = \beta + it\;|\; \sigma_1(x) \geq \beta > \sigma_0,\;0<t < T(x) \}$;
\item $\mathcal{R}_4=  \{ \rho = \beta + it\;|\; \beta \leq \sigma_0,\; 0<t < T(x) \}$; and
\item $\mathcal{R}_5=  \{ \rho = \beta + it\;|\; \beta \leq \sigma_0,\; t > T(x) \}$.
\end{enumerate}
The functions $T(x)$ and $\sigma_1(x)$ would typically be chosen so as to balance out the contribution of the sums from the various regions. Particularly $\sigma_1(x)$ balances the contributions from $\mathcal{R}_1$ and $\mathcal{R}_3$, and $T(x)$ either balances the contributions of $\mathcal{R}_1$ and $\mathcal{R}_2$, if $\mathcal{R}_2$ needs to be considered, or balances the contribution from $\mathcal{R}_1$ against an error term arising from either smoothing an explicit formula or Perron's formula depending on the method being used.

We show how to apply our work in computing actual bounds in the case of an explicit Korobov-Vinogradov zero-free region by improving the bound \eqref{JYprevious}; we prove the following theorem.
\begin{thm}\label{OurMainTheorem}
For $x\geq x_0 \geq e^3$ we have 
    \[ E_{\psi}(x) < L(x_0)\exp\Bigg(-D\bigg(1+\frac{\log\log\log(x)}{15\log\log(x)}\bigg)\log(x)^{3/5} (\log\log(x))^{-1/5}\Bigg), \]
where
\begin{equation}
    D = \frac{5}{2} \, \left(\frac{5}{3}\right)^{\frac{1}{5}} \left(\frac{2000}{161967}\right)^{\frac{3}{5}} \approx 0.1982767
    \label{eq:D1}.
\end{equation} 
\begin{table}[H]
\centering
\caption{Values for $L(x_0)$}
\begin{tabular}{|c|c|l|c|c|l|c|c|}
\hline
$x_0$ & $L(x_0)$ & & $x_0$ & $L(x_0)$ & & $x_0$ & $L(x_0)$ \\
\hline
$\exp(3)$  & $0.2390$  & & $\exp(20)$    & $0.001013$              & & $\exp(20000)$  & $2.544 \times 10^{-24}$  \\
$\exp(4)$  & $0.2096$  & & $\exp(1500)$  & $0.0001630$             & & $\exp(30000)$  & $3.471 \times 10^{-30}$  \\
$\exp(5)$  & $0.1326$  & & $\exp(2000)$  & $0.00001100$            & & $\exp(40000)$  & $4.975 \times 10^{-35}$  \\
$\exp(6)$  & $0.08369$ & & $\exp(2500)$  & $1.012 \times 10^{-6}$  & & $\exp(100000)$ & $4.174 \times 10^{-55}$  \\
$\exp(7)$  & $0.05275$ & & $\exp(3000)$  & $1.166 \times 10^{-7}$  & & $\exp(500000)$ & $1.323 \times 10^{-114}$ \\
$\exp(8)$  & $0.03319$ & & $\exp(4000)$  & $2.517 \times 10^{-9}$  & & $\exp(10^6)$   & $1.416 \times 10^{-153}$ \\
$\exp(9)$  & $0.02085$ & & $\exp(5000)$  & $8.638 \times 10^{-11}$ & & $\exp(10^8)$   & $2.792 \times 10^{-157}$ \\
$\exp(10)$ & $0.01309$ & & $\exp(10000)$ & $1.805 \times 10^{-16}$ & & $\exp(10^9)$  &  $3.366 \times 10^{-558}$                  \\
\hline
\end{tabular}
\end{table}
\noindent and for $x_0 > \exp(10^9)$ we have $L(x_0)$ equal to the upper bound of $\mathcal{E}(x_0)$ given in Table \ref{tableBound}.
\end{thm}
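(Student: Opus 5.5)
The proof splits into a range treated by known results and a range treated analytically. For $x$ in a finite initial range, up to about $\exp(1500)$ or so, I would not use the zero-sum machinery at all. Instead I would compare the claimed right-hand side with existing explicit bounds. Concretely, I would use the computational bounds for $E_\psi(x)$ for $x\le 10^{19}$ (Büthe-type results) and the de la Vallée Poussin-type bound \eqref{eq:fksresult} of \cite{FKS1}. I would check numerically that these lie below $L(x_0)\exp(-D(\dots))$ on each interval $[x_0, x_0']$ between consecutive table entries. The right-hand side is monotone in $x$ on each such piece, so it suffices to compare endpoints on a fine grid.

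For large $x$ the main route is a smoothed (or truncated Perron) explicit formula. This gives
\[
E_\psi(x) \le \sum_{|t|<T}\frac{x^{\beta-1}}{|\rho|}\,(\text{smoothing weight}) + \text{error}(x,T),
\]
where the error term is of size roughly $x^{-1}T\log^2 T$ in the truncated case, or a weighted analogue in the smoothed case. I would split the zeros into the regions $\mathcal{R}_1,\dots,\mathcal{R}_5$ from the introduction and bound each with the general sum estimates developed later in the paper.

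The inputs for the region sums are as follows.
\begin{itemize}
\item For $\mathcal{R}_1$: the explicit Korobov--Vinogradov zero-free region $\beta\le 1-1/(R\log^{2/3}t(\log\log t)^{1/3})$ together with a Riemann--von Mangoldt count $N(T)$.
\item For $\mathcal{R}_3$: an explicit zero-density estimate $N(\sigma,T)\ll T^{B(1-\sigma)^{3/2}}\log^{C}T$, integrated by parts against $x^{\sigma-1}$.
\item For $\mathcal{R}_4$ and $\mathcal{R}_5$: trivial bounds using $\beta\le\sigma_0$, which give a saving of $x^{\sigma_0-1}$.
\item For $\mathcal{R}_2$: the tail of $\sum t^{-m}$.
\end{itemize}

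The parameter choices drive the constant. I would choose $\log T = \omega$-type quantities, that is, $\log T \asymp (\log x)^{3/5}(\log\log x)^{-1/5}$, minimizing $v(u)\log x + u$ for $v(u)=c u^{-2/3}\log(u)^{-1/3}$. The constant $c$ comes from the explicit zero-free region; the fraction $2000/161967$ in $D$ should be exactly $c$ after matching normalizations. A careful expansion of the minimizer, keeping the second-order $\log\log\log x/\log\log x$ term, produces the factor $1+\frac{\log\log\log x}{15\log\log x}$. I would choose $\sigma_1(x)$ so that the density-weighted integral over $\mathcal{R}_3$ peaks at the boundary and is therefore dominated by the $\mathcal{R}_1$ contribution times a polynomial factor in $\omega$. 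Any leftover polynomial factor $\omega^q$ is absorbed into $L(x_0)$, using $\omega^q e^{-\epsilon\omega}$ decay or simply its monotone decrease past $x_0$, together with the second-order term. Finally I would evaluate each bound at $x_0$ and take the maximum over $x\ge x_0$ to get the tabulated $L(x_0)$.

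The main obstacle is this last absorption step. The clean exponent $-D(1+\frac{\log\log\log x}{15\log\log x})\log^{3/5}x(\log\log x)^{-1/5}$ must dominate the true $-\omega(x)$ plus all secondary terms: the density contribution, the $\log^C T$ factors, and the smoothing loss. The gain available for this is only second order. I would first try choosing $T$ and $\sigma_1$ slightly off-optimal so that $\omega(x)\ge D(1+\dots)\log^{3/5}x(\log\log x)^{-1/5}+q\log\omega(x)$ for $x\ge x_0$. I would verify this inequality analytically for huge $x$ (beyond $\exp(10^9)$, via Table \ref{tableBound}) and numerically on the intermediate range.
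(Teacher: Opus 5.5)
\textbf{The approach has a genuine gap in your treatment of $\mathcal{R}_1$, and as written it cannot reach the constant $D$.}

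You bound the zeros with $\beta>\sigma_1(x)$, $\gamma\le T$ using only the zero-free region and the total count $N(T)$. Since $dN(t)\approx\frac{1}{2\pi}\log t\,dt$, the weight $1/\gamma$ is cancelled by the density of zeros. In the variable $u=\log t$ the sum is about $\int^{\log T}e^{-v(u)\log x}\,u\,du\approx x^{-v(\log T)}\log^2T$, which is dominated by the top height.

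You must balance this against the truncation (or smoothing) error, which is at least of order $1/T$ (here $2M_{\omega,\alpha}\log(x)^{1/5}/T$). The best you can then get is $\exp(-U)$ with $U=v(U)\log x$. For $v(u)=cu^{-2/3}\log(u)^{-1/3}$ this gives $U\sim(3/2)^{3/5}\tilde u_{1,0}(x)\approx 0.51\,\omega(x)$. That is an exponent constant near $0.51D\approx0.10$: the Ingham-type bound recalled in the introduction, and weaker even than \eqref{JYprevious}.

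The loss is in the leading constant. No adjustment of $T$ or $\sigma_1$, and no absorption of second-order terms, can repair it. Putting the density estimate only into $\mathcal{R}_3$ does not help either. There it is integrated against $x^{\sigma-1}$ at a fixed height, so it never sees the height-dependent saving of the zero-free region.

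\textbf{The missing idea is to use the zero-density estimate jointly with the zero-free region near the $1$-line,} as in Section \ref{sec:zeros1}. Bound $\sum_{\beta>\sigma_1(x),\,\gamma\le T}x^{\beta-1}/\gamma\le\int_{H_0}^{T}x^{-v(\log t)}t^{-1}\,dN(\sigma_1(x),t)$. Then integrate by parts against $\tilde N(\sigma_1,t)=\mathcal{U}t^{\frac83(1-\sigma_1)}(\log t)^{5-2\sigma_1}+\mathcal{V}(\log t)^2$.

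Because $\frac83(1-\sigma_1(x))\ll\log(x)^{-2/5}\to0$, this count is only $t^{o(1)}$. So the factor $1/t=e^{-u}$ survives, and the integrand becomes $\exp(-(1-p)u-v(u)\log x+q\log u)$. It has a unique peak at $u_{1-p,q}(x)\ge u_1(x)$, where $\log(x)v'(u_1)=-1$ (Propositions \ref{prop:tm} and \ref{prop:fx}). The Laplace-type bounds of Propositions \ref{prop:mainintegralabstract}, \ref{prop:tailintegralabstract} and \ref{prop:Cvalue} then give $\exp(-\omega(x))$, up to a factor polynomial in $u_1(x)$ and the loss from $t^{p}$.

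This is where $\min_u(v(u)\log x+u)$ really comes from: it is the location of a peak, not a truncation height. It yields $D=\frac52\big((5/3)^{1/3}\tfrac23c\big)^{3/5}$, so $2000/161967=\frac23c$, not $c$ as you guessed. It also lets $T(x)=t_1(x)^3$ sit far beyond the peak, which makes the Perron error negligible rather than a balancing term. The paper then chooses $\sigma_1(x)$ so that the $\mathcal{R}_3$ bound $\ll x^{\sigma_1-1}$ matches $e^{-\tilde u_{1,0}-v(\tilde u_{1,0})\log x}$ up to a factor $\tilde u_{1,0}^4$.

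\textbf{A secondary point concerns the tabulated $L(x_0)$.} For $x_0\le\exp(10^8)$ these values come from \cite{Buthe}, \cite{FKS2} and \eqref{eq:fksresult}, which the paper uses up to $\log x\approx1.238\cdot10^8$. The paper's analytic bound is proved only for $\log x\ge10^4$, with coefficient about $5216$ there. Switching to an analytic bound near $\exp(1500)$ would therefore not reproduce the table.
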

The proof will be given in Section \ref{sec:prooftheorem1}.
\begin{Rem}
In comparison to \eqref{JYprevious}, we note that $-D = -0.1982767\dots$ is stronger than $-0.1853$ and we have an extra term $\frac{\log\log\log(x)}{15\log\log(x)}$ strengthening the result. We also do not have a logarithmic leading factor such as $\log(x)^{1.801}$ but rather a decreasing factor $L(x)$.

It is also fair to mention that the authors in \cite{JY-PNT} used an explicit Korobov-Vinogradov zero-free region with $c=1/57.54$ from \cite{FordZFR} to derive the bound \eqref{JYprevious} while we used stronger $c=1/53.989$ from \cite{BellottiZFR}. However, according to \eqref{MainConstantFormula}, even in the case of $c=1/57.54$ we would have obtained a constant around $-0.1908$, which would still have been better than $-0.1853$.

 We note that the preprints \cite{JZDET} and \cite{BellottiNewZFR} both express an exact formula
\[ \omega(x)=  \left(\frac{c^3 5^6}{2^2 3^4}\right)^{1/5} \frac{\log(x)^{3/5}}{\log\log(x)^{1/5}} \] in the case of the Korobov-Vinogradov  zero-free region when they should write
\[ \omega(x)\sim \left(\frac{c^3 5^6}{2^2 3^4}\right)^{1/5} \frac{\log(x)^{3/5}}{\log\log(x)^{1/5}}.
\]
This error does not appear to impact their remaining analysis as they make no essential use of the claim. But we want to emphasize that $f(x) \sim g(x)$ does not imply $\exp(f(x))\sim \exp(g(x))$ as this difference is a core feature of how we obtain our result. 
\end{Rem}
\begin{Rem}
For $x_0 \geq \exp(10^9)$ the bound in Theorem \ref{OurMainTheorem} is stronger than \eqref{eq:fksresult}.
\end{Rem}

\subsection{Acknowledgments}

This project was initiated as part of the Inclusive Paths in Explicit Number Theory Summer School hosted at BIRS-UBCO and supported through the PIMS Collaborative Research Group on L-Functions in Analytic Number Theory. We are grateful to the organizers of that event, as well as PIMS, BIRS, and UBCO and the event's other sponsors.

We would also like to thank Nizar Bou Ezz for early discussions on the project  which helped direct us, including specifically his ideas for how to structure the argument of Proposition \ref{prop:fx}.

Finally, we would like to thank Sebastian Zuniga-Alterman for his input into the project.

\subsection{AI Use Declaration}

All of the ideas, theorems, and most of the proofs in this paper were developed by human authors.

AI systems, particularly Claude Opus 5, GPT-5.6 Sol and GPT-6 Astra were used to proofread the document, review the original code (Astra) and generate code to independently recompute displayed numeric values (Opus 5) and to help in some derivations. AI assisted with deriving the bounds in Lemma \ref{lem:sigmabounds} and also the expression \eqref{H-inc} in Lemma \ref{hat-lemma} to help with the stage of the proof where that expression was used.

\section{Background and Notation}\label{sec:BN}

The non-trivial zeros of an $L$-function shall be denoted by $\rho=\beta+i\gamma$, and, when the ordinate is being used as a variable of integration, also by $\rho=\beta+it$. The non-triviality refers to the assumption that $0<\beta<1$. Zeros are always counted with multiplicity.

In order to consider sums over the zeros of an $L$-function we shall wish to have access to various explicit results concerning the number and location of these zeros. We describe each of these below.

\subsection{Zero-Free Regions}

A zero-free region refers to a function $v(w)$ of the logarithmic height $w=\log(t)$ such that any nontrivial zero $\rho=\beta+it$ with $|t|>e^{w_0}$ satisfies
\[  \beta < 1 - v(\log( |t|)). \]
We shall need to assume that the function $v(w)$ satisfies:
\begin{enumerate}
\item $v \in C^2([w_0, +\infty))$ and $0<v(w)<1/2$ for all $w>w_0$,
\item for all $w \ge w_0$, $v'(w)<0$ and $\lim\limits_{\substack{w\to \infty}} v'(w)=0$,
\item for all $w \ge w_0$, $v''(w)>0$, $v''$ is decreasing, tending to $0$, and $\lim\limits_{\substack{w\to \infty}} v''(aw)\,\frac{w}{v'(w)}=c_a\neq 0$, for all $a\geq 1$ (note that necessarily $c_a<0$).
\end{enumerate}

There are several classical approaches to obtaining zero-free regions. These lead to zero-free regions of the following shapes, all of which satisfy the above provided $w_0$ is taken large enough:
\begin{itemize}
\item de la Vallée Poussin $v(w) = \frac{1}{Rw}$.
\item Littlewood $v(w) = \frac{\log(w)}{Rw}$.
\item Korobov-Vinogradov  $v(w) = \frac{1}{Rw^{2/3}\log(w)^{1/3}}$.
\end{itemize}

For the Riemann zeta function bounds of this form can be obtained as follows. For example, the article
\cite{MTYZFR}  gives $R=5.558691$ for de la Vallée Poussin following the argument of \cite{KadiriZFR} (with recent claimed improvements in \cite{BTY-ZFR}), the article
\cite{YangZFR} gives $R=21.233$ for the Littlewood form, and the article
\cite{BellottiZFR} gives $R=53.989$ for the Korobov-Vinogradov form building off \cite{MTYZFR} which follows the work of \cite{FordZFR}.

Work for many other types of $L$-functions has also been done. These typically provide results with similar asymptotic behaviours. For example for Dirichlet $L$-functions we have
\cite{McCurleyZFR-D},
\cite{HBZFR-D},
\cite{KadiriZFR-D}, and
\cite{KhaleZFR-D}.
For Dedekind zeta functions we have
\cite{KadiriZFR-D2} and
\cite{LeeZFR-D}.
For more general $L$-functions we have
\cite{LeungZFR} and
\cite{IPENT-ZFR-Hamieh}.

\subsection{Partial Riemann-Hypothesis Verification}

We shall denote by $H_0$ a value for which there is a partial verification of the (Generalized) Riemann-Hypothesis.
Such a verification asserts that all zeros with $| {\rm Im}(\rho) |\leq H_0$ satisfy ${\rm Re}(\rho)=1/2$.

The most recent partial verification of the Riemann-Hypothesis for zeta is due to Platt-Trudgian, \cite{PlattTrudgianZeros}, who verified this for $H_0= 3 000 175 332 800 > 3\cdot 10^{12}$.

In other contexts Platt has obtained partial verification of the Generalized Riemann-Hypothesis for Dirichlet $L$-functions \cite{PlattDirichlet}.

We note that in the absence of such a verification in order to get explicit results on prime counting functions one may want to have explicit zero-free regions which are applicable for small $t$, see for example \cite[Chapter 3]{Kphd2002}.
 In this context one would also want zero-density bounds which are applicable for small $T$, see for example \cite[Theorem 22.1]{Kphd2002}.

Related to this one may wish to rule out exceptional zeros. Recent work of \cite{LZZ} rules these out for Dirichlet $L$-functions for $q<10^{10}$.

\subsection{Counts of Zeros}

Denote by $N(T)$ the number of zeros of $L$ (counted with multiplicity) with $ 0 < {\rm Im}(\rho) \leq T$\footnote{We note that there are different conventions for this quantity which differ by a factor of $2$.}.

Such counts are important when evaluating sums over zeros in regions where strong zero-density bounds are not available, that is, near the $1/2$ line.

Typical bounds for this are of the form
\[  |N(T) - AT\log(T) - BT - C|  = O( \log(T) ). \]
For the Riemann Zeta function bounds of this form can be obtained from \cite{HSW2021}, which has recently been updated by \cite{BWZerosZeta} to obtain
\[  |N(T) -  \frac{T}{2\pi}\log(T/2\pi e) |  \leq  0.10076 \log(T) + 0.24460\log\log T + 8.08344.  \]

In the context of  Dedekind zeta functions such results can be found in \cite{HSW2021b} whereas for Dirichlet $L$-functions they can be found in \cite{BMO2021}. Some very recent improvements to the Dedekind case have been announced in \cite{amberger2025estimating}, and \cite{BWArtin} gives an explicit zero-counting formula for Artin $L$-functions which, assuming Artin's holomorphy conjecture, improves on both \cite{amberger2025estimating} and \cite{BMO2021} for sufficiently large $T$; it is unconditional for Hecke $L$-functions over any number field.

We note that whether or not there is a $C$ term appears in a result is often a consequence of the method. For $\zeta$ the ``correct'' $C$ is $7/8$.

\subsection{Zero-Density Bounds}

Denote by $N(\sigma,T)$ the number of zeros of $L$ with ${\rm Re}(\rho) > \sigma$ and $ 0 < {\rm Im}(\rho) \leq T$.
We shall need to assume that there exist bounds of the form
\[ N(\sigma,T) \leq \tilde{N}(\sigma,T)   \qquad \sigma > \sigma_0,\, T > T_0 \]
where 
\[ \tilde{N}(\sigma,T) = \sum_{i}  c_i T^{p_i(\sigma)}\log(T)^{q_i(\sigma)}. \]
 We shall further wish to assume that the sum is finite with every $c_i>0$, and that $p_i$ and $q_i$ are continuous, non-increasing, with $0\leq q_i(\sigma)$, $0 \leq p_i(\sigma)<1$, and $p_i(1)=0$.
In practice, the value $T_0$ would often be the value $H_0$, see above.

For the Riemann Zeta function bounds of this form can be obtained from \cite{KLN}. These were updated in \cite{FKS1} to give:
\[  N(\sigma,T) < 17.4194 T^{\frac{8}{3}(1-\sigma)}(\log T)^{3+2(1-\sigma)}  + 5.2954 (\log T)^2  \]
uniformly for  $ \sigma \ge \frac{5}{8}$ and $T>3$.
Subsequent work of \cite[Corollary 4.50]{GF-Thesis} gives for $\sigma\in[0.75,1]$ and $T\geq 3$ the bound
\begin{equation}\label{eq:golnoush}
N(\sigma,T) \leq  12.45321 T ^{\frac{8}{3}(1-\sigma)}(\log T)^{3+2(1-\sigma)}+ 3.86894 (\log T)^2.
\end{equation}
More generally this type of result can be found in \cite{KNZDD} for Dedekind zeta functions.
For Dirichlet $L$-functions this is studied in \cite{RZDD}, but we caution that as noted in \cite{KLN} there are some errors which make the constants obtained in \cite{RZDD} potentially unreliable.

\section{Sums over Zeros}\label{sec:zeroso}

In this section we begin by giving a quick overview of how to use standard tools to find sums over the zeros before we provide the more significant contribution of tightly estimating sums near the one line in Section \ref{sec:zeros1}. For simplicity we assume $x>4$ so that for any non-trivial zero $\rho=\beta+i\gamma$ we can use the bound $\frac{x^{\beta-1}}{|\rho|^m} < \frac{x^{\beta-1}}{\gamma^m}$ and evaluate sums of the form $\sum \frac{x^{\beta-1}}{\gamma^m}$ over different regions.

\subsection{Zeros near the Half Line}

We first consider zeros near, or to the left of, the half-line. That is, we wish to consider the sum
\[   \sum_{\substack{0 <\gamma \leq T \\  \beta \leq  \sigma }} \frac{x^{\beta-1}}{\gamma^m}. \]
Note that if $m>1$ the value $T=\infty$ is admissible. 
 Throughout this subsection we take $1/2 \leq \sigma < 1$.

For $T>H_0$ it is very natural to split and bound it as
\[   x^{-1/2}\sum_{\substack{0 <\gamma \leq H_0  }} \frac{1}{\gamma^m} +   x^{\sigma-1}\sum_{\substack{H_0 <\gamma<\leq T \\  \beta \leq  \sigma }} \frac{1}{\gamma^m}. \]
Following \cite[Proposition 3.6]{FKS1}, one can use instead (for $T_0\leq H_0$) the bound:
\[   x^{-1/2}\sum_{\substack{0 <\gamma \leq T_0 }} \frac{1}{\gamma^m}  +  x^{-1/2}\sum_{\substack{T_0 <\gamma \leq T }} \frac{1}{\gamma^m} +  x^{\sigma-1}\sum_{\substack{H_0 <\gamma \leq T \\ 1/2 < \beta \leq  \sigma }} \frac{1}{\gamma^m}. \]
An alternative slight refinement is provided by Proposition \ref{prop:lowlying} below.

Above, the first sum can be approximated using explicitly calculated lists of zeros, while the last sum, $\sum_{\substack{H_0 <\gamma\leq T \\ 1/2 < \beta \leq \sigma }} \frac{1}{\gamma^m}$, now runs over at most half of the zeros occurring in $\sum_{\substack{H_0 <\gamma \leq T \\  \beta \leq  \sigma }} \frac{1}{\gamma^m}$, provided the zeros off the critical line are symmetric about $\beta=1/2$ at each height (as is the case for $\zeta$, and for any $L$-function whose functional equation pairs $\rho$ with $1-\bar\rho$), since the zeros with $\beta\leq\sigma$ then consist of those on the critical line together with pairs $\beta, 1-\beta$ of which at most one lies in $1/2<\beta \leq \sigma$. With $\sigma>1/2$ the increase in size of the second term will be less significant than the decrease in the third.
\begin{Rem}
In most applications this optimization will not be significant.
\end{Rem}

The classical approach for such sums is to use a Riemann-Stieltjes integral
\[  \sum_{\substack{T_1 < \gamma \leq T_2 \\  \beta \leq  \sigma }} \frac{x^{\beta-1}}{\gamma^m} \leq x^{\sigma -1} \int_{T_1}^{T_2} \frac{1}{t^m} d N(t) \]
together with bounds for $N(T)$.
Using this method we have the following bound coming from \cite[Corollary 2.6]{FaberKadiri} and \cite[Lemma 7]{RosserShoenfeld}.
\begin{prop}\label{prop:K}
Suppose $m>0$, $2\pi < T_1 < T_2$ with $b_1,b_2\geq 0$, $b_3$ real, and assume that for all $T\geq T_1$ we have
\[ \left| N(T) - \left( \frac{T}{2\pi}\log \frac{T}{2\pi e} + \kappa \right) \right| \leq R_\kappa(T) := b_1\log T + b_2 \log\log T + b_3 \]
then
\[ \sum_{\substack{T_1 < \gamma \leq T_2  }} \frac{1}{\gamma^m}  \leq \left(\frac{1}{2\pi} + \frac{b_1 \log T_1 + b_2}{T_1 \log T_1 \log(T_1/2\pi)}\right) \left(\int_{T_1}^{T_2} \frac{1}{t^m}\log(t/2\pi) {\rm dt} \right) + \frac{2R_\kappa(T_1)}{T_1^m} \]
where we recall that for $m\neq 1$
\[ \int_{T_1}^{T_2} \frac{1}{t^m}\log(t/2\pi) {\rm dt} =  \left( \frac{1 + (m-1)\log(T_1/2\pi)}{(m-1)^2T_1^{m-1}} \right)-\left( \frac{1 + (m-1)\log(T_2/2\pi)}{(m-1)^2T_2^{m-1}} \right) \]
whereas for $m=1$
\[  \int_{T_1}^{T_2} \frac{1}{t^m}\log(t/2\pi) {\rm dt} = \log(T_2/T_1)\log(\sqrt{T_2T_1}/(2\pi)) \]
\end{prop}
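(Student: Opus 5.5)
We write the sum as a Riemann--Stieltjes integral and integrate by parts. Set $M(t)=\frac{t}{2\pi}\log\frac{t}{2\pi e}+\kappa$ and $E(t)=N(t)-M(t)$, so that $|E(t)|\leq R_\kappa(t)$ for $t\geq T_1$. Then
\[ \sum_{T_1<\gamma\leq T_2}\frac{1}{\gamma^m}=\int_{T_1}^{T_2}\frac{dN(t)}{t^m}=\int_{T_1}^{T_2}\frac{M'(t)}{t^m}\,dt+\int_{T_1}^{T_2}\frac{dE(t)}{t^m}, \]
and $M'(t)=\frac{1}{2\pi}\log(t/2\pi)$. The first integral is exactly $\frac{1}{2\pi}\int_{T_1}^{T_2}t^{-m}\log(t/2\pi)\,dt$, which gives the main term.

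For the second integral we integrate by parts:
\[ \int_{T_1}^{T_2}\frac{dE(t)}{t^m}=\frac{E(T_2)}{T_2^m}-\frac{E(T_1)}{T_1^m}+m\int_{T_1}^{T_2}\frac{E(t)}{t^{m+1}}\,dt. \]
We then bound each piece in absolute value by $R_\kappa$.

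\begin{itemize}
\item The boundary terms are at most $R_\kappa(T_2)T_2^{-m}+R_\kappa(T_1)T_1^{-m}$.
\item The integral is at most $m\int_{T_1}^{T_2}R_\kappa(t)t^{-m-1}\,dt$.
\item Integrating by parts once more, $m\int_{T_1}^{T_2} R_\kappa t^{-m-1}\,dt = R_\kappa(T_1)T_1^{-m}-R_\kappa(T_2)T_2^{-m}+\int_{T_1}^{T_2}R_\kappa'(t)t^{-m}\,dt$.
\end{itemize}

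Adding these, the $R_\kappa(T_2)$ terms cancel and we get $2R_\kappa(T_1)T_1^{-m}+\int_{T_1}^{T_2}R_\kappa'(t)t^{-m}\,dt$. The standing assumptions $b_1,b_2\geq0$ guarantee $R_\kappa'\geq0$ and that $R_\kappa(T_1)\ge|E(T_1)|\ge 0$, so this step is legitimate. This is the Rosser--Schoenfeld device.

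It remains to absorb $\int R_\kappa'(t)t^{-m}\,dt$ into the main term. We have $R_\kappa'(t)=\frac{b_1}{t}+\frac{b_2}{t\log t}=\frac{b_1\log t+b_2}{t\log t}$. We want
\[ R_\kappa'(t)\leq \frac{b_1\log T_1+b_2}{T_1\log T_1\log(T_1/2\pi)}\,\log(t/2\pi)\qquad (t\geq T_1). \]
Equivalently, the function
\[ g(t)=\frac{b_1\log t+b_2}{t\log t\,\log(t/2\pi)} \]
should be non-increasing on $t\geq T_1>2\pi$. Writing $g=\frac{b_1}{t\log(t/2\pi)}+\frac{b_2}{t\log t\log(t/2\pi)}$, each summand is a reciprocal of a product of positive increasing functions for $t>2\pi$. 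Hence $g(t)\leq g(T_1)$, and multiplying by $\log(t/2\pi)t^{-m}$ and integrating gives the stated coefficient. Verifying this monotonicity cleanly, so that the constant lands exactly in the form stated, is the only step needing care; the rest is bookkeeping.

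Finally, the closed forms for $\int_{T_1}^{T_2}t^{-m}\log(t/2\pi)\,dt$ follow by direct antidifferentiation. For $m\neq1$ the antiderivative is $-\frac{1+(m-1)\log(t/2\pi)}{(m-1)^2t^{m-1}}$. For $m=1$ it is $\frac12\log^2(t/2\pi)$, and the difference of squares factors as $\log(T_2/T_1)\log(\sqrt{T_1T_2}/2\pi)$.
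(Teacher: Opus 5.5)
Your proof is correct and is essentially the paper's argument. The paper substitutes the majorant $\tilde{N}=M+R_\kappa$ for $N$ directly in the integrated-by-parts expression; this produces $2R_\kappa(T_1)/T_1^m$ from the lower boundary term and leaves $\int_{T_1}^{T_2} t^{-m}\,d\tilde{N}(t)$. You instead split $N=M+E$ and integrate by parts a second time, which is the same computation arranged differently. Both then conclude with the same monotonicity of $(b_1\log t+b_2)/(t\log t\log(t/2\pi))$ for $t>2\pi$.
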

The following sketch of proof also indicates which terms must be modified for other $L$-functions.
\begin{proof}[Sketch of Proof]
If $T_1$ or $T_2$ is the ordinate of zero the result follows from the general case by continuity, so simply assume they are not.
Then write
\[ \tilde{N}(T) =  \frac{T}{2\pi}\log \frac{T}{2\pi e} +\kappa+ b_1\log T + b_2 \log\log T + b_3 \]
then we have
\begin{align*}
 \int_{T_1}^{T_2} \frac{1}{t^m} d N(t)
 &=   \frac{N(t)}{t^m}\Big|_{T_1}^{T_2} + \int_{T_1}^{T_2} \frac{m N(t)}{t^{m+1}}{\rm dt} \\
 &< \frac{2R_\kappa(T_1)}{T_1^m} + \frac{\tilde{N}(t)}{t^m}\Big|_{T_1}^{T_2} + \int_{T_1}^{T_2} \frac{m \tilde{N}(t)}{t^{m+1}}{\rm dt} \\
 &= \frac{2R_\kappa(T_1)}{T_1^m} + \int_{T_1}^{T_2} \frac{1}{t^m} d \tilde{N}(t)
 \end{align*}
 One then simply takes a crude bound on
 \[ \int_{T_1}^{T_2}  \left( \frac{b_1}{t^{m+1}} + \frac{b_2}{t^{m+1}\log(t)}\right) {\rm dt} = \int_{T_1}^{T_2}  \left( \frac{b_1\log(t) + b_2}{t\log(t)\log(t/2\pi)} \right)  \frac{\log(t/2\pi) }{t^m}{\rm dt}  \]
 using that $\left( \frac{b_1\log(t) + b_2}{t\log(t)\log(t/2\pi)} \right) $ is decreasing for $t>2\pi$, since $b_1,b_2\geq 0$, to get the shape of the final answer.
\end{proof}
\begin{Rem}\label{rem:78}
Note that $\kappa$ in the above is an artifact of the bound on  $N(T)$ and its only role is in parameterizing the shape of what was being bounded.
\end{Rem}

A refinement of the above is possible by taking a less crude estimate of the error term; however, one can do better, as in \cite{BPT}. Part of the idea is that the mechanism by which bounds on quantities like $N(T)$ are obtained often involves bounding a quantity:
\[ S(T) = \pi^{-1} {\rm arg}(\zeta(1/2+iT)), \]
where the argument is obtained by continuous variation along the segments from $2$ to $2+iT$ and from $2+iT$ to $1/2+iT$ (with the usual averaging convention if $T$ is the ordinate of a zero).
Indeed, if we set $Q(T) = N(T) - \left(\frac{T}{2\pi}\log \frac{T}{2\pi e} +\frac{7}{8}\right)$ then for $T\ge 2\pi$ we have
\[ |Q(T)- S(T) | < \frac{A_2}{T} \]
where $A_2 = \frac{1}{150}$, see \cite[Lemma 2]{BPT}.

Consequently, if one defines $S_1(T) = \int_{0}^T S(t){\rm dt}$ and proves bounds of the form
\[ | S_1(T) -c_0 | < A_0 + A_1\log(T) \]
for $T>2\pi $, where $c_0 = S_1(T_*)$ for a suitable fixed $T_*$ (in what follows $T_*=168\pi$), then one can use integration by parts to refine the above estimates.
This is the strategy of \cite{BPT}, where they had $A_0=2.067$ and $A_1=0.059$, which ultimately yields:
\begin{prop}\label{prop:BPT}
 Suppose $m>0$ and $2\pi < T_1 < T_2$, and there are constants $A_0$, $A_1$, and $A_2$ such that for all $T\geq T_1$
\[  \left| S_1(T) - S_1(168\pi) \right| < A_0 + A_1 \log(T) \]
and
\[ \left| Q(T) - S(T) \right| < \frac{A_2}{T} \]
then we have the following estimate for sums over zeros of $\zeta$:
\[ \left| \sum_{\substack{T_1 <\gamma \leq T_2  }} \frac{1}{\gamma^m}  - \frac{1}{2\pi}  \left(\int_{T_1}^{T_2} \frac{1}{t^m}\log(t/2\pi) {\rm dt} \right) - 
\frac{Q(T_2)}{T_2^m} + \frac{Q(T_1)}{T_1^m}\right| <
\frac{2A_0m + 2A_1m\log(T_1) +A_1 + A_2}{T_1^{m+1}}  \]
\end{prop}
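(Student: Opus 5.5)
We will prove Proposition \ref{prop:BPT} by writing the sum over zeros as a Riemann--Stieltjes integral against $dN(t)$, splitting off the smooth main term, and integrating the remainder by parts twice: once to bring in $Q$, and once more to bring in $S_1$. As in the sketch after Proposition \ref{prop:K}, we first assume that $T_1,T_2$ are not ordinates of zeros; the general case follows by continuity, using the averaging convention for $S$.

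\textbf{Step 1: isolating the boundary terms.} Write $N(t) = \frac{t}{2\pi}\log\frac{t}{2\pi e} + \frac{7}{8} + Q(t)$. Since $\frac{d}{dt}\left(\frac{t}{2\pi}\log\frac{t}{2\pi e}\right) = \frac{1}{2\pi}\log\frac{t}{2\pi}$, we get
\[ \sum_{T_1<\gamma\leq T_2}\frac{1}{\gamma^m} = \int_{T_1}^{T_2}\frac{dN(t)}{t^m} = \frac{1}{2\pi}\int_{T_1}^{T_2}\frac{\log(t/2\pi)}{t^m}\,{\rm dt} + \int_{T_1}^{T_2}\frac{dQ(t)}{t^m}. \]
Integrating by parts gives
\[ \int_{T_1}^{T_2}\frac{dQ(t)}{t^m} = \frac{Q(T_2)}{T_2^m} - \frac{Q(T_1)}{T_1^m} + m\int_{T_1}^{T_2}\frac{Q(t)}{t^{m+1}}\,{\rm dt}. \]
The boundary terms are exactly the ones subtracted on the left side of the claimed inequality. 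It therefore remains to bound $\left|m\int_{T_1}^{T_2} Q(t)\,t^{-m-1}\,{\rm dt}\right|$.

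\textbf{Step 2: replacing $Q$ by $S$.} Write $Q = S + (Q-S)$. By hypothesis $|Q(t)-S(t)| < A_2/t$, so the $Q-S$ part contributes at most
\[ m\int_{T_1}^{\infty}\frac{A_2}{t^{m+2}}\,{\rm dt} = \frac{m}{m+1}\,\frac{A_2}{T_1^{m+1}} \leq \frac{A_2}{T_1^{m+1}}. \]

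\textbf{Step 3: integrating $S$ by parts.} Set $F(t) = S_1(t) - S_1(168\pi)$, so that $F' = S$ almost everywhere and, for $t\geq T_1$, $|F(t)| < A_0 + A_1\log t$. Then
\[ m\int_{T_1}^{T_2}\frac{S(t)}{t^{m+1}}\,{\rm dt} = m\left[\frac{F(t)}{t^{m+1}}\right]_{T_1}^{T_2} + m(m+1)\int_{T_1}^{T_2}\frac{F(t)}{t^{m+2}}\,{\rm dt}. \]
Each piece is bounded using $|F(t)| < A_0 + A_1\log t$:
\begin{itemize}
\item The lower boundary term is at most $m(A_0+A_1\log T_1)/T_1^{m+1}$.
\item The upper boundary term is at most $m(A_0+A_1\log T_2)/T_2^{m+1}$, which is no larger than the corresponding quantity at $T_1$ provided $(A_0+A_1\log t)t^{-m-1}$ is decreasing on $[T_1,\infty)$.
\item For the integral, use
\[ \int_{T_1}^{\infty}\frac{\log t}{t^{m+2}}\,{\rm dt} = \frac{\log T_1}{(m+1)T_1^{m+1}} + \frac{1}{(m+1)^2T_1^{m+1}}, \]
so it contributes at most
\[ \frac{m(A_0+A_1\log T_1)}{T_1^{m+1}} + \frac{mA_1}{(m+1)T_1^{m+1}}. \]
\end{itemize}

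\textbf{Step 4: combining.} Adding the contributions gives
\[ \frac{2mA_0 + 2mA_1\log T_1 + \frac{m}{m+1}A_1 + A_2}{T_1^{m+1}}, \]
which is bounded by the stated expression since $\frac{m}{m+1}\leq 1$.

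\textbf{Main obstacle.} The delicate point is the upper boundary term at $T_2$. Its bound by the $T_1$ value requires $(A_0+A_1\log t)t^{-m-1}$ to be decreasing, which holds when $(m+1)(A_0+A_1\log t) > A_1$. This is immediate for $t > 2\pi$ with the stated sizes of $A_0$ and $A_1$. If needed, one can alternatively absorb the $T_2$ boundary term using the slack from the factors $\frac{m}{m+1}$ and $\frac{1}{m+1}$ in the other terms.
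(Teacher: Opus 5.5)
Your strategy (Stieltjes integral against $dN$, integrate by parts to bring in $Q$, replace $Q$ by $S$ at a cost of $\frac{m}{m+1}A_2T_1^{-m-1}$, then integrate by parts once more against $F=S_1-S_1(168\pi)$) is the same double integration by parts that the paper relies on. Your computations in Steps 1--3 are correct. The gap is in Step 4: the sum does not come out to what you claim.

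Write $g(t)=A_0+A_1\log t$. The three pieces in Step 3 are:
\begin{itemize}
\item the lower boundary term, $m\,g(T_1)T_1^{-m-1}$;
\item the upper boundary term, which you bound by that same quantity;
\item the integral over $[T_1,\infty)$, which equals $m\,g(T_1)T_1^{-m-1}+\frac{mA_1}{m+1}T_1^{-m-1}$.
\end{itemize}
Their sum has $3m(A_0+A_1\log T_1)$ in the numerator, not $2m(A_0+A_1\log T_1)$. As written, you have only proved a weaker inequality.

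Your fallback does not rescue this. The slack from the factors $\frac{m}{m+1}$ is only $\frac{A_1+A_2}{(m+1)T_1^{m+1}}$. That cannot absorb a term of size $m\,g(T_2)T_2^{-m-1}$ when $T_2$ is close to $T_1$. The monotonicity of $g(t)t^{-m-1}$, which you single out as the main obstacle, is not the real issue.

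The fix is to avoid extending the integral to $\infty$ before bounding. The discarded tail is exactly what pays for the $T_2$ boundary term. Since $\frac{d}{dt}\bigl(g(t)t^{-m-1}\bigr)=A_1t^{-m-2}-(m+1)g(t)t^{-m-2}$, you have the exact identity
\[ m(m+1)\int_{T_1}^{T_2}\frac{g(t)}{t^{m+2}}\,dt = \frac{m\,g(T_1)}{T_1^{m+1}} - \frac{m\,g(T_2)}{T_2^{m+1}} + mA_1\int_{T_1}^{T_2}\frac{dt}{t^{m+2}}. \]
After bounding $|F|\le g$ termwise in $m\bigl[F(t)t^{-m-1}\bigr]_{T_1}^{T_2}+m(m+1)\int_{T_1}^{T_2}F(t)t^{-m-2}\,dt$, the $T_2$ boundary term cancels exactly. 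The $S$-part is then at most
\[ \frac{2m\,g(T_1)}{T_1^{m+1}}+\frac{mA_1}{m+1}\bigl(T_1^{-m-1}-T_2^{-m-1}\bigr). \]
With $A_1\ge 0$ (implicit in the statement) and your Step 2, this gives the claimed bound, in fact with $\frac{m}{m+1}(A_1+A_2)$ in place of $A_1+A_2$. No monotonicity hypothesis is needed.
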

As above the case where $T_1$ or $T_2$ are ordinates of zeros follows from the general case.
\begin{Rem}
If one has an effective estimate for $N(T_1)$ then one can directly estimate $Q(T_1)$, for example with $T_1=30610046000$ we know from \cite{PlattZeros} that $N(T_1)=103800788359$ so that $Q(T_1) = 0.36876949943444761978\ldots$
and with $T_2=3 000 175 332 800$ we know from \cite{PlattTrudgianZeros} that $N(T_2) = 12 363 153 437 138$  so  $Q(T_2) = 0.90067212848167322498\ldots$.
From this we find by applying Proposition \ref{prop:BPT} that
\[ \sum_{T_1<\gamma \leq T_2} \frac{1}{\gamma} = 17.951092215437177382877787 \pm 7.53 \cdot 10^{-21}.\]

Otherwise, one can include one or both of the $\frac{Q(T_i)}{T_i^m}$ terms as an error term and bound it by $\frac{R_{7/8}(T)}{T^m}$ as in Proposition \ref{prop:K}.

For $m>1$ one may take $T_2$ to $\infty$ and we provide example calculations of the tails in Table \ref{Tab:Tails}. Note that one can obtain very precise bounds on the sum between the two  heights of that table, for $m\geq 2$, by taking differences.
\end{Rem}

\begin{table}
\caption{Precise values for reciprocal sums for zeros of $\zeta$ based on data from Platt \cite{PlattZeros}}\label{Tab:Platt}
\begin{tabular}{|c|c|l|}
\hline
$X$ & $m$ & $\sum_{0<\gamma \leq X} \frac{1}{\gamma^m} $\\
    \hline
30 610 046 000 & 1 &  $39.57976478012384076139330428195283099 \pm 4.579 \cdot 10^{-33}$\\
30 610 046 000 & 2 &  $2.3104992994237211638265334892625777 \cdot 10^{-2}\pm 2.881 \cdot 10^{-34}$\\
30 610 046 000 & 3 &  $7.295482727097042139385394205871737\cdot 10^{-4} \pm 2.202 \cdot 10^{-35}$\\
30 610 046 000 & 4 &  $3.717259928526968616486626242986273 \cdot 10^{-5} \pm 1.763 \cdot 10^{-36}$\\
30 610 046 000 & 5 &  $2.231188699502103328640628691837191 \cdot 10^{-6} \pm 1.423 \cdot 10^{-37}$\\
30 610 046 000 & 6 &  $1.441739314009732796953815560948206 \cdot 10^{-7} \pm 1.148 \cdot 10^{-38}$\\
30 610 046 000 & 7 &  $9.675344542702350408719965627461515 \cdot 10^{-9} \pm 9.159 \cdot 10^{-40}$\\
30 610 046 000 & 8 &  $6.630316802529908698732720819613551 \cdot 10^{-10} \pm 7.261 \cdot 10^{-41}$\\
\hline
\end{tabular}
\end{table}

\begin{table}
\caption{Precise values for tails of sums for zeros of $\zeta$ from Proposition \ref{prop:BPT}} \label{Tab:Tails}
\begin{tabular}{|c|c|l|}
\hline
$X$ & $m$ & $\sum_{\gamma > X} \frac{1}{\gamma^m} $\\
    \hline
30 610 046 000 & 2 & $1.211817591506684755377 \cdot 10^{-10} \pm 5.4 \cdot 10^{-31}  $\\
30 610 046 000 & 3 & $1.936979148506797204374 \cdot 10^{-21} \pm 2.398\cdot  10^{-41} $\\
30 610 046 000 & 4 & $4.187784438181266359899 \cdot 10^{-32} \pm 1.046 \cdot 10^{-51} $\\
30 610 046 000 & 5 & $1.0223041003143534479121 \cdot 10^{-42} \pm 4.255\cdot  10^{-62} $\\
30 610 046 000 & 6 & $2.665890866401849763492 \cdot 10^{-53} \pm 1.669\cdot  10^{-72} $\\
30 610 046 000 & 7 & $7.246919788858296425405 \cdot 10^{-64} \pm 6.355 \cdot 10^{-83} $\\
30 610 046 000 & 8 & $2.027133372647964607815 \cdot 10^{-74} \pm 2.374 \cdot  10^{-93} $\\
3 000 175 332 800 & 2 & $1.479620256065536827148781172\cdot 10^{-12}\pm 5.600\cdot 10^{-37}$\\
3 000 175 332 800 & 3 & $2.42168510373753161368761323\cdot 10^{-25}\pm 2.796\cdot 10^{-49}$\\
3 000 175 332 800 & 4 & $5.3484657624351158869283997\cdot 10^{-38}\pm 1.241\cdot 10^{-61}$\\
3 000 175 332 800 & 5 & $1.33294576387575411409460504\cdot 10^{-50}\pm 5.172\cdot 10^{-74}$\\
3 000 175 332 800 & 6 & $3.5477666351663426850806775\cdot 10^{-63} \pm 2.070\cdot 10^{-86}$\\
3 000 175 332 800 & 7 & $9.8422067798255919866179924\cdot 10^{-76} \pm 8.035\cdot 10^{-99}$\\
3 000 175 332 800 & 8 & $2.8094204750410392718638777\cdot 10^{-88}\pm 3.064\cdot 10^{-111}$\\
\hline
\end{tabular}
\end{table}

In summary, for the Riemann-zeta function we have:
\begin{prop}\label{prop:lowlying}
Suppose $T> H_0 = 3 000 175 332 800$, $x>1$ and $1/2\leq \sigma<1$, then we have the following bounds on the sums over zeros of $\zeta$:
\[ \sum_{0<\gamma \leq H_0} \frac{1}{\gamma} =
57.530856995561018144271091 \pm 7.53\cdot 10^{-21}
\]
and
\[ \sum_{H_0<\gamma\leq T} \frac{1}{\gamma} =
\frac{1}{2\pi}\log\left(\frac{T}{H_0}\right)\log\left(\frac{\sqrt{TH_0}}{2\pi}\right) - \frac{Q(H_0)}{H_0}
\pm \left(\frac{ R_{7/8}(T) }{T}  +  \frac{2A_0 + 2A_1\log(H_0) + A_1+A_2}{H_0^2} \right)
\]
where all constants satisfy conditions as in Proposition \ref{prop:BPT} and \ref{prop:K}.
Consequently we have
\begin{align*}
\sum_{\substack{0 <\gamma \leq T \\  \beta \leq  \sigma }} \frac{x^{\beta-1}}{\gamma}
<&  57.5308569955611 x^{-1/2}  \\
&+\left( \frac{x^{\sigma-1} + x^{-\sigma}}{2} \right)\Bigg(\frac{1}{2\pi}\log\left(\frac{T}{H_0}\right)\log\left(\frac{\sqrt{TH_0}}{2\pi}\right) \\&\qquad\qquad\qquad - \frac{Q(H_0)}{H_0} + \frac{ R_{7/8}(T) }{T}  +  \frac{2A_0 + 2A_1\log(H_0) + A_1+A_2}{H_0^2} \Bigg).
\end{align*}
\end{prop}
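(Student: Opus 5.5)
The proof combines the numerical data with Propositions \ref{prop:K} and \ref{prop:BPT}, and then uses the symmetry of off-line zeros about $\beta = 1/2$.

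\emph{The sum up to $H_0$.} I would split it at $T_1 = 30\,610\,046\,000$. The part $\sum_{0<\gamma\le T_1}1/\gamma$ is read off Table \ref{Tab:Platt} with $m=1$. The part $\sum_{T_1<\gamma\le H_0}1/\gamma$ is the value $17.951092\ldots \pm 7.53\cdot 10^{-21}$ from the preceding remark. That value came from Proposition \ref{prop:BPT} with $m=1$, $T_2=H_0$, and the exact values of $Q(T_1)$ and $Q(H_0)$ obtained from Platt's and Platt–Trudgian's zero counts. Adding the two pieces gives $57.5308569955\ldots$. The error is dominated by the $7.53\cdot10^{-21}$ term, since the Table \ref{Tab:Platt} error is negligible. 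This step is pure arithmetic.

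\emph{The sum over $H_0<\gamma\le T$.} I would apply Proposition \ref{prop:BPT} with $m=1$, $T_1=H_0$ and $T_2=T$, keeping $Q(H_0)/H_0$ exactly. The main term is the $m=1$ integral formula, namely $\log(T/H_0)\log(\sqrt{TH_0}/2\pi)$ divided by $2\pi$. The remaining term $Q(T)/T$ is not known exactly, so I would bound it using $|Q(T)|\le R_{7/8}(T)$, as in Proposition \ref{prop:K} with $\kappa=7/8$. The BPT error becomes $(2A_0+2A_1\log H_0+A_1+A_2)/H_0^{2}$. Together these give the two-sided estimate in the statement.

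\emph{The weighted sum.} I would split $\sum_{0<\gamma\le T,\ \beta\le\sigma}x^{\beta-1}/\gamma$ at height $H_0$.
\begin{itemize}
\item For $\gamma\le H_0$, every zero has $\beta=1/2$. This part is therefore $x^{-1/2}$ times the first sum, which is at most $57.5308569955611\,x^{-1/2}$.
\item For $H_0<\gamma\le T$, the zeros with $\beta\le\sigma$ are of two kinds. Some lie on the critical line and contribute weight $x^{-1/2}$. The others come in pairs $\beta$, $1-\beta$ at the same ordinate, with $\beta\ne 1/2$; since zeros of $\zeta$ are symmetric under $\rho\mapsto 1-\bar\rho$, both members appear at height $\gamma$.
\item For a pair with $1/2<\beta\le\sigma$, both members satisfy $\beta \le \sigma$. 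Its combined weight is $x^{\beta-1}+x^{-\beta}$. This function of $\beta$ is convex and even about $1/2$, hence increasing on $[1/2,\sigma]$, so the weight is at most $x^{\sigma-1}+x^{-\sigma}$.
\item For a pair with $\beta>\sigma$, only the member $1-\beta<1-\sigma$ is counted. Its weight $x^{-\beta}$ is at most $x^{-\sigma}$, since $x>1$.
\item A zero on the line has weight $x^{-1/2}$. This is at most $\tfrac12(x^{\sigma-1}+x^{-\sigma})$ by AM–GM, because $x^{\sigma-1}\cdot x^{-\sigma}=x^{-1}$.
\end{itemize}
So in every case each counted zero has weight at most $\tfrac12(x^{\sigma-1}+x^{-\sigma})$. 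Multiplying this by the upper bound for $\sum_{H_0<\gamma\le T}1/\gamma$ from the second step gives the displayed inequality.

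The main obstacle is the pairing argument in the third step. One has to organise the per-ordinate accounting of zeros with multiplicity so that each counted zero carries weight at most half of $x^{\sigma-1}+x^{-\sigma}$. The rest is direct substitution into the earlier propositions.
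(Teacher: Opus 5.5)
Your proposal is correct and follows the same route as the paper. You use the $m=1$ row of Table~\ref{Tab:Platt} plus the remark after Proposition~\ref{prop:BPT} for the first sum, and Proposition~\ref{prop:BPT} with $|Q(T)|\le R_{7/8}(T)$ for the second. The weighted sum uses the same pairing $\rho\leftrightarrow 1-\bar{\rho}$ with the same three cases (left of $1-\sigma$, on the line, $1/2<\beta\le\sigma$).

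One fix is needed in the third step. Your summary that \emph{each} counted zero has weight at most $\tfrac12(x^{\sigma-1}+x^{-\sigma})$ is false for the member at $\beta=\sigma>1/2$ of a pair. What your bullet actually proves, and all you need since both members share the ordinate $\gamma$, is that the pair's \emph{average} weight is at most this. In the case $\beta>\sigma$ you should also note that $x^{-\sigma}\le x^{\sigma-1}$ because $\sigma\ge 1/2$; alternatively, as the paper does, use that the uncounted partner still appears in $\sum_{H_0<\gamma\le T}1/\gamma$.
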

\begin{proof}
    The proof is relatively straightforward by combining the $m=1$ row of Table \ref{Tab:Platt} together with the remark following Proposition \ref{prop:BPT}.
    We briefly explain where the term $\frac{x^{\sigma-1} + x^{-\sigma}}{2}$ comes from. Broadly speaking it arises from pairing each zero with the corresponding one under the functional equation. Several cases need to be considered: zeros to the left of $1-\sigma$, zeros on the half line, and the remaining zeros. In the latter two cases that $\frac{x^{\sigma-1} + x^{-\sigma}}{2}$ is increasing in $\sigma$ when $\sigma>1/2$ is used. In the  first  case one uses that the combined contribution from the two zeros is an overestimate.
\end{proof}

\subsection{Intermediate Zeros}

Here we wish to consider the sum
\[   \sum_{\substack{0 <\gamma \leq T \\  \sigma_0  < \beta \leq  \sigma_1 }} \frac{x^{\beta-1}}{\gamma^m}, \]
where $1/2 < \sigma_0 < \sigma_1 < 1$ and where $T=\infty$ is an admissible value.

\begin{prop}
Let $m>0$, $T_2>T_1 >1$ (with $T_2=\infty$ admissible, in which case the terms involving $T_2$ are $0$), $\sigma_0>1/2$, $c_i> 0$  and $p_i<m$, and suppose that $N(\sigma,T) \leq \sum_i c_i T^{p_i}\log(T)^{q_i}$ for $\sigma\geq \sigma_0$ and $T_1\leq T  \leq T_2$. Then for $\sigma\geq\sigma_0$ we have the following bound on the sum over zeros of $\zeta$:
\[ S_{m,T_1,T_2}(\sigma) := \sum_{\substack{T_1 <\gamma \leq T_2 \\  \sigma  < \beta  }} \frac{1}{\gamma^m}  \leq  B_0(m,\sigma, T_1, T_2),\]
where $B_0(m,\sigma, T_1, T_2)$ is given by:
\begin{equation}\label{def-B4}
\sum_{i}
c_i  \left( \frac{ (\log T_2 )^{q_i }}{T_2^{m-p_i }}
+ \frac{m}{(m-p_i)^{q_i+1} } \left( \Gamma(q_i+1,(m-p_i)\log T_1)  - \Gamma(q_i+1,(m-p_i)\log T_2)\right) \right).
\end{equation}
\end{prop}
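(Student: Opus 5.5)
Fix $\sigma\geq\sigma_0$ and write $N_\sigma(t):=N(\sigma,t)$, the number of zeros with $\beta>\sigma$ and $0<\gamma\leq t$. The sum $S_{m,T_1,T_2}(\sigma)$ is then the Riemann--Stieltjes integral
\[ \int_{T_1}^{T_2} t^{-m}\,dN_\sigma(t). \]
As in the sketch of proof of Proposition \ref{prop:K}, we may assume $T_1,T_2$ are not ordinates of zeros; the general case follows by continuity. We integrate by parts:
\[ \int_{T_1}^{T_2} t^{-m}\,dN_\sigma(t) = \frac{N_\sigma(T_2)}{T_2^m} - \frac{N_\sigma(T_1)}{T_1^m} + m\int_{T_1}^{T_2}\frac{N_\sigma(t)}{t^{m+1}}\,{\rm dt}. \]
Since $N_\sigma(T_1)\geq 0$, we drop the negative boundary term.

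In the remaining two terms we insert the hypothesis $N_\sigma(t)\leq\sum_i c_i t^{p_i}\log(t)^{q_i}$, valid for $T_1\leq t\leq T_2$. This is legitimate because both terms carry a nonnegative weight. Here $p_i,q_i$ denote the values $p_i(\sigma),q_i(\sigma)$ at the fixed $\sigma$, and $p_i<m$ by hypothesis. The boundary term at $T_2$ becomes exactly $\sum_i c_i(\log T_2)^{q_i}T_2^{p_i-m}$. The integral becomes
\[ \sum_i c_i\, m\int_{T_1}^{T_2} t^{p_i-m-1}(\log t)^{q_i}\,{\rm dt}. \]

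To evaluate each integral we substitute $u=(m-p_i)\log t$. Then $t^{p_i-m-1}\,{\rm dt}=e^{-u}\,du/(m-p_i)$ and $(\log t)^{q_i}=u^{q_i}/(m-p_i)^{q_i}$, so the integral equals
\[ \frac{1}{(m-p_i)^{q_i+1}}\int_{(m-p_i)\log T_1}^{(m-p_i)\log T_2}u^{q_i}e^{-u}\,du = \frac{\Gamma(q_i+1,(m-p_i)\log T_1)-\Gamma(q_i+1,(m-p_i)\log T_2)}{(m-p_i)^{q_i+1}}. \]
Multiplying by $c_i m$ and summing over $i$, together with the boundary terms, gives exactly $B_0(m,\sigma,T_1,T_2)$ in \eqref{def-B4}.

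For $T_2=\infty$ we apply the finite case with $T_2'\to\infty$. Since $p_i<m$, we have $(\log T_2')^{q_i}T_2'^{p_i-m}\to 0$ and $\Gamma(q_i+1,(m-p_i)\log T_2')\to 0$. The partial sums increase monotonically to $S_{m,T_1,\infty}(\sigma)$, so the bound passes to the limit. Here the hypothesis is taken to hold for all $T\geq T_1$.

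The only delicate point is sign bookkeeping: we may replace $N_\sigma$ by its upper bound only where it appears with a nonnegative coefficient. This is why the lower boundary term must be discarded rather than bounded, and it is the reason the estimate is one-sided. Everything else is the routine change of variables above.
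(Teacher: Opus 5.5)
Your proof is correct and follows essentially the argument the paper intends. No separate proof is written there, but it is the same Riemann--Stieltjes integration by parts used in the sketch for Proposition~\ref{prop:K}: discard $-N(\sigma,T_1)T_1^{-m}$, insert the density bound into the two nonnegatively weighted terms, and evaluate $m\int_{T_1}^{T_2}t^{p_i-m-1}(\log t)^{q_i}\,dt$ via $u=(m-p_i)\log t$.

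One minor remark: the reduction to $T_1,T_2$ not being ordinates is unnecessary, and a literal continuity argument at $T_2$ would need the density hypothesis slightly beyond $T_2$. For right-continuous $N(\sigma,\cdot)$ and continuous $t^{-m}$, the Stieltjes integral over $[T_1,T_2]$ already equals $\sum_{T_1<\gamma\le T_2,\ \beta>\sigma}\gamma^{-m}$ exactly, and the integration by parts identity holds as written.
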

The quantities $c_i$, $p_i$, and $q_i$ are likely to depend on $\sigma$.

\begin{Rem}\label{Rem:Gammabounds}
We recall that the incomplete Gamma function is given by 
\begin{equation}\label{def-Gamma}
\Gamma(s,x) = \int_x^{\infty} t^{s-1} e^{-t} dt, \ \text{ for }\ \Re(s)>0 \text{ (and for all $s$ when $x>0$)}.
\end{equation}
and by integration by parts $\Gamma(s,x) = x^{s-1}e^{-x} + (s-1)\Gamma(s-1,x)$ so that:
\begin{equation}\label{prop-Gamma}
\begin{aligned}
\Gamma(1+a,x) &= x^ae^{-x} + a\Gamma(a,x),\\
\Gamma(2+a,x) &= \Big(x^{1+a} + (1+a)x^a\Big) e^{-x} + (1+a)a\Gamma(a,x) , \\
\Gamma(3+a,x) &= \Big(x^{2+a}+(2+a)x^{1+a} + (2+a)(1+a)x^a\Big)e^{-x} + (2+a)(1+a)a\Gamma(a,x) , \\
\Gamma(4+a,x) &= \Big(x^{3+a}+(3+a)x^{2+a} + (3+a)(2+a)x^{1+a} + (3+a)(2+a)(1+a)x^{a}\Big)e^{-x} \\&\quad+ (3+a)(2+a)(1+a)a\Gamma(a,x) , \\
\Gamma(a,x) &\leq x^{a-1}e^{-x}  \text{ for $x>0$ and $a \leq 1 $ (with equality when $a=1$)}, \text{ and }\\
\Gamma(s,x) &\sim x^{s-1} e^{-x} \ \text{as}\ x\to\infty, \ \text{for all }\ s.\\
\end{aligned}
\end{equation}
leading to upper bounds for any integer $n\geq 0$, $x>0$ and $0\leq a\leq 1$
\[ \Gamma(n+a,x) \leq \frac{P_n(a,x)}{x^{1-a}}e^{-x} \]
where $P_n(a,x) = \sum_{k=0}^{n} \Big(\prod_{j=k}^{n-1}(j+a)\Big) x^{k}$ is a degree $n$ polynomial in $x$ whose coefficients are  the rising factorials (Pochhammer symbols) $(k+a)_{n-k}$ (the restriction $a\geq0$ ensures these coefficients, and hence the coefficient $(n-1+a)\cdots a$ of $\Gamma(a,x)$, are non-negative; for $a<0$ the inequality can fail).
\end{Rem}

The above leads to the following:
\begin{lem}\label{lem:Qbound}
    Suppose that $m>0$, $T_2>T_1>1$, $q(\sigma) = \hat{q} + \tilde{q}(1-\sigma)$ where $\hat{q}$ is a non-negative integer, $\tilde{q}\geq 0$ and $0\leq \tilde{q}(1-\sigma)\leq 1$, and $p(\sigma) = \tilde{p}(1-\sigma)$ with $\tilde{p}\ge 0$ and $m-p(\sigma)>0$, then
    \[
        \frac{m\Gamma(q(\sigma)+1,(m-p(\sigma))\log T_1)}{(m-p(\sigma))^{q(\sigma)+1} }  \leq
        \frac{1}{T_1^m\log(T_1)} \frac{ Q(T_1,1-\sigma) }{ \left(\log(T_1)^{\tilde{q}} T_1^{\tilde{p}}\right)^{\sigma-1}}
    \]
    and
      \begin{align*}
       & \frac{m(\Gamma(q(\sigma)+1,(m-p(\sigma))\log T_1)-\Gamma(q(\sigma)+1,(m-p(\sigma))\log T_2))}{(m-p(\sigma))^{q(\sigma)+1} }\\ & \qquad\qquad\qquad\leq
        \frac{1}{T_1^m\log(T_1)} \frac{ Q(T_1,1-\sigma) }{ \left(\log(T_1)^{\tilde{q}} T_1^{\tilde{p}}\right)^{\sigma-1}}
        -\frac{1}{T_2^m\log(T_2)} \frac{ Q(T_2,1-\sigma) }{ \left(\log(T_2)^{\tilde{q}} T_2^{\tilde{p}}\right)^{\sigma-1}}
    \end{align*}
    where $ Q(T,a) = \frac{mP_{\hat{q}+1}(\tilde{q}a,\log(T)(m-\tilde{p}a))}{
 (m-\tilde{p}a)^{\hat{q}+2}}$. Moreover, $Q(T,a)$ and its derivatives of every order with respect to $a$ are non-negative provided $T>1$, $m-\tilde{p}a>0$, $a\geq 0$ and $\tilde{q}\ge0$.
\end{lem}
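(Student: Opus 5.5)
The plan is to reduce both inequalities to the incomplete Gamma bound in Remark~\ref{Rem:Gammabounds}, after rewriting the exponent in the right form.

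\textbf{Setting up the substitution.} Write $a=1-\sigma$, $n=\hat q+1$ and $\alpha=\tilde q a\in[0,1]$, so that $q(\sigma)+1=n+\alpha$. Also write $x_j=(m-\tilde p a)\log T_j$, which is positive by hypothesis.

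\textbf{First inequality.} Apply $\Gamma(n+\alpha,x)\le P_n(\alpha,x)x^{\alpha-1}e^{-x}$ at $x=x_1$. This is legitimate because $0\le\alpha\le1$ and $n\geq 1$ is an integer. The factor $x_1^{\alpha-1}e^{-x_1}$ equals
\[
(m-\tilde p a)^{\tilde q a-1}(\log T_1)^{\tilde q a-1}\,T_1^{-m}T_1^{\tilde p a}.
\]
Multiply by $m/(m-\tilde p a)^{\hat q+1+\tilde q a}$. The powers of $(m-\tilde p a)$ combine to $(m-\tilde p a)^{-(\hat q+2)}$, which produces exactly $Q(T_1,a)$. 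The remaining factor $(\log T_1)^{\tilde q a}T_1^{\tilde p a}/(T_1^m\log T_1)$ equals
\[
\frac{1}{T_1^m\log T_1}\Big((\log T_1)^{\tilde q}T_1^{\tilde p}\Big)^{-(\sigma-1)}.
\]
So the first inequality is just bookkeeping of exponents.

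\textbf{Second inequality (the main point).} Let $F(x)=P_n(\alpha,x)x^{\alpha-1}e^{-x}$ and $G(x)=F(x)-\Gamma(n+\alpha,x)$. By the previous step, the claimed inequality is equivalent to $G(x_1)\ge G(x_2)$, i.e.\ to $G$ being nonincreasing, since $x_1<x_2$.

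Iterating the recursion $\Gamma(s,x)=x^{s-1}e^{-x}+(s-1)\Gamma(s-1,x)$ gives
\[
\Gamma(n+\alpha,x)=\Big(\sum_{k\ge1}(k+\alpha)_{n-k}\,x^{k}\Big)x^{\alpha-1}e^{-x}+(\alpha)_n\Gamma(\alpha,x).
\]
Comparing with $P_n$, the only difference is the $k=0$ term. Hence
\[
G(x)=(\alpha)_n\big(x^{\alpha-1}e^{-x}-\Gamma(\alpha,x)\big).
\]
Here $(\alpha)_n\ge0$ because $\alpha\geq 0$. Differentiating, the derivative of the bracket is
\[
(\alpha-1)x^{\alpha-2}e^{-x}-x^{\alpha-1}e^{-x}+x^{\alpha-1}e^{-x}=(\alpha-1)x^{\alpha-2}e^{-x}\le0.
\]
So $G$ is nonincreasing. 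Multiplying by the same positive factor $m/(m-p)^{q+1}$ and rewriting exactly as in the first inequality gives the second. If $\alpha=0$ then $(\alpha)_n=0$ and $G\equiv0$, so this case causes no trouble.

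\textbf{Derivatives of $Q$.} Expand
\[
Q(T,a)=m\sum_{k=0}^{\hat q+1}\Big(\prod_{j=k}^{\hat q}(j+\tilde q a)\Big)(\log T)^k(m-\tilde p a)^{k-\hat q-2}.
\]
Each product $\prod_j(j+\tilde q a)$ is a polynomial in $a$ with nonnegative coefficients, since $\tilde q\ge0$. It is therefore nonnegative together with all its $a$-derivatives for $a\ge0$.

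Since $k\le\hat q+1$, the exponent $-(\hat q+2-k)=-r$ has $r\ge1$. The function $(m-\tilde p a)^{-r}$ has $j$-th derivative
\[
r(r+1)\cdots(r+j-1)\,\tilde p^{\,j}(m-\tilde p a)^{-r-j}\ge0.
\]
The factor $(\log T)^k$ is positive for $T>1$. By the Leibniz rule, products and sums of functions whose derivatives of every order are nonnegative retain this property, which gives the final claim.

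The step needing the most care is the monotonicity of $G$ in the second inequality; everything else is bookkeeping.
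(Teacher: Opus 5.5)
Your proposal is correct and takes essentially the paper's route. The first inequality is the incomplete-Gamma bound of Remark~\ref{Rem:Gammabounds} plus exponent bookkeeping. Your monotonicity of $G(x)=(\alpha)_n\bigl(x^{\alpha-1}e^{-x}-\Gamma(\alpha,x)\bigr)$ is exactly what ``repeat the calculations of Remark~\ref{Rem:Gammabounds} on the difference'' amounts to, and your Leibniz-rule check of the signs of the $a$-derivatives of $Q$ (which relies on the hypothesis $\tilde p\ge 0$) supplies a verification the paper leaves unstated.
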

For the final inequality above one should repeat the calculations of Remark \ref{Rem:Gammabounds} on the difference.

\begin{prop}
Let $x>1$ and suppose we have
$ S_{m,T_1,T_2}(\sigma) \leq \tilde{S}_{m,T_1,T_2}(\sigma)$ for $\sigma_0\leq\sigma\leq\sigma_1$, where $\tilde{S}_{m,T_1,T_2}$ is of bounded variation on $[\sigma_0,\sigma_1]$. Then we have the following equivalent bounds
\[
 \sum_{\substack{T_1 <\gamma \leq T_2 \\  \sigma_0  < \beta \leq \sigma_1  }} \frac{x^{\beta-1}}{\gamma^m}  \leq  x^{\sigma_1-1} \left( \tilde{S}_{m,T_1,T_2}(\sigma_1) - \int_{\sigma_0}^{\sigma_1} x^{\sigma-\sigma_1}d\tilde{S}_{m,T_1,T_2}(\sigma) \right)
 \]
 and
 \[
  \sum_{\substack{T_1 <\gamma \leq T_2 \\  \sigma_0  < \beta \leq \sigma_1  }} \frac{x^{\beta-1}}{\gamma^m}  \leq  \tilde{S}_{m,T_1,T_2}(\sigma_0)x^{\sigma_0-1}   + \int_{\sigma_0}^{\sigma_1} \tilde{S}_{m,T_1,T_2}(\sigma)\log(x)x^{\sigma-1}d\sigma.
 \]
 \end{prop}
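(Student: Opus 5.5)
We write the sum as a Stieltjes integral in the real part $\sigma$ and then integrate by parts. Define the counting function in $\sigma$,
\[ S(\sigma) := S_{m,T_1,T_2}(\sigma) = \sum_{\substack{T_1<\gamma\le T_2\\ \sigma<\beta}} \frac{1}{\gamma^m}. \]
It is non-increasing and right-continuous in $\sigma$. Its jumps sit at the real parts $\beta$ of the zeros, and the jump at $\beta$ equals minus the total weight $\gamma^{-m}$ of the zeros with that real part.

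\textbf{Step 1: the identity for the sum.} Since $-dS$ is the measure putting mass $\gamma^{-m}$ at $\beta$, we have
\[ \sum_{\substack{T_1<\gamma\le T_2\\ \sigma_0<\beta\le\sigma_1}} \frac{x^{\beta-1}}{\gamma^m} = -\int_{(\sigma_0,\sigma_1]} x^{\sigma-1}\,dS(\sigma). \]
Integration by parts, with $u=x^{\sigma-1}$ continuous and increasing, gives
\[ -\int_{(\sigma_0,\sigma_1]} x^{\sigma-1}\,dS(\sigma) = x^{\sigma_0-1}S(\sigma_0) - x^{\sigma_1-1}S(\sigma_1) + \int_{\sigma_0}^{\sigma_1} S(\sigma)\log(x)\,x^{\sigma-1}\,d\sigma. \]

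\textbf{Step 2: the second bound.} We drop the nonpositive term $-x^{\sigma_1-1}S(\sigma_1)$. We then replace $S$ by $\tilde S$ both at $\sigma_0$ and inside the integral. This is legitimate because $x>1$ makes the weights $x^{\sigma_0-1}$ and $\log(x)\,x^{\sigma-1}$ positive, and $S\le\tilde S$ holds pointwise on $[\sigma_0,\sigma_1]$. This yields the second displayed bound. No monotonicity of $\tilde S$ is needed here, only that the integral exists, which bounded variation guarantees.

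\textbf{Step 3: the first bound.} This is the main point requiring care. Integrating by parts in the reverse direction for the function $\tilde S$, which is of bounded variation, gives
\[ \int_{\sigma_0}^{\sigma_1} \tilde S(\sigma)\log(x)\,x^{\sigma-1}\,d\sigma = x^{\sigma_1-1}\tilde S(\sigma_1) - x^{\sigma_0-1}\tilde S(\sigma_0) - \int_{\sigma_0}^{\sigma_1} x^{\sigma-1}\,d\tilde S(\sigma). \]
Adding $\tilde S(\sigma_0)x^{\sigma_0-1}$ to both sides shows that the right side of the second bound equals
\[ x^{\sigma_1-1}\Bigl(\tilde S(\sigma_1) - \int_{\sigma_0}^{\sigma_1} x^{\sigma-\sigma_1}\,d\tilde S(\sigma)\Bigr). \]
So the two bounds are literally equal, which matches the word ``equivalent'' in the statement, and the first bound follows from the second.

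The only technical points are the endpoint conventions for the Riemann–Stieltjes integrals. Because $x^{\sigma}$ is continuous, the integration-by-parts formula holds for any bounded-variation integrator, regardless of how jumps at the endpoints are handled. Any jump of $S$ exactly at $\sigma_0$ corresponds to zeros with $\beta=\sigma_0$, which are excluded from the sum. This is consistent with using $S(\sigma_0)$, the count of zeros with $\beta>\sigma_0$.
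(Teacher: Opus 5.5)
Your proof is correct and follows essentially the same route as the paper: write the sum as $-\int x^{\sigma-1}\,dS_{m,T_1,T_2}(\sigma)$ and integrate by parts, then drop the nonpositive term $-x^{\sigma_1-1}S_{m,T_1,T_2}(\sigma_1)$, replace $S_{m,T_1,T_2}$ by $\tilde{S}_{m,T_1,T_2}$ using the positivity of the weights, and integrate by parts in reverse to obtain the first form. The only minor difference is that the paper gets the Stieltjes representation as an inequality, via the bound $x^{\sigma''-1}(S(\sigma')-S(\sigma''))$ on finer and finer subdivisions, whereas you read it off as an exact identity from the measure $-dS$, which is slightly cleaner.
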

\begin{proof}
First, we notice that for any $\sigma'<\sigma''$ that
\[ \sum_{\substack{T_1 <\gamma \leq T_2 \\  \sigma'  < \beta  \leq  \sigma'' }} \frac{x^{\beta-1}}{\gamma^m} \leq   x^{\sigma''-1} (S_{m,T_1,T_2}(\sigma') - S_{m,T_1,T_2}(\sigma'')).  \]
Consequently, by taking a finer and finer subdivision of  $[\sigma_0,\sigma_1]$ we obtain through Riemann-Stieltjes integration that:
\[  \sum_{\substack{T_1 <\gamma\leq T_2 \\  \sigma_0  < \beta  \leq  \sigma_1 }} \frac{x^{\beta-1}}{\gamma^m} \leq -\int_{\sigma_0}^{\sigma_1} x^{\sigma-1} d S_{m,T_1,T_2}(\sigma) \]
Integration by parts gives:
\[ -\int_{\sigma_0}^{\sigma_1} x^{\sigma-1} d S_{m,T_1,T_2}(\sigma) = -x^{\sigma_1-1} S_{m,T_1,T_2}(\sigma_1)  + x^{\sigma_0-1}S_{m,T_1,T_2}(\sigma_0) + \int_{\sigma_0}^{\sigma_1} S_{m,T_1,T_2}(\sigma) dx^{\sigma-1} \]
As $x^{\sigma-1}$ is increasing we see that we may replace various terms by their upper bounds to obtain
\[  \sum_{\substack{T_1 <\gamma\leq T_2 \\  \sigma_0  < \beta  \leq  \sigma_1 }}  \frac{x^{\beta-1}}{\gamma^m}\leq  x^{\sigma_0-1}\tilde{S}_{m,T_1,T_2}(\sigma_0) + \int_{\sigma_0}^{\sigma_1} \tilde{S}_{m,T_1,T_2}(\sigma) dx^{\sigma-1}. \]
  Reversing the integration by parts now yields
 \[
  \sum_{\substack{T_1 <\gamma\leq T_2 \\  \sigma_0  < \beta  \leq  \sigma_1 }}  \frac{x^{\beta-1}}{\gamma^m}\leq  x^{\sigma_1-1}\tilde{S}_{m,T_1,T_2}(\sigma_1) - \int_{\sigma_0}^{\sigma_1} x^{\sigma-1}d\tilde{S}_{m,T_1,T_2}(\sigma) . \qedhere\]
\end{proof}

\begin{Rem}
We can see that, provided $\tilde{S}_{m,T_1,T_2}(\sigma)$ decreases with $\sigma$, we have
\begin{align*} - \int_{\sigma_0}^{\sigma_1} x^{\sigma-\sigma_1}d\tilde{S}_{m,T_1,T_2}(\sigma)
&= -\tilde{S}_{m,T_1,T_2}(\sigma_1) + x^{\sigma_0-\sigma_1}\tilde{S}_{m,T_1,T_2}(\sigma_0) + \int_{\sigma_0}^{\sigma_1} \tilde{S}_{m,T_1,T_2}(\sigma)\log(x)x^{\sigma-\sigma_1} d\sigma \\
&\leq -\tilde{S}_{m,T_1,T_2}(\sigma_1) + x^{\sigma_0-\sigma_1}\tilde{S}_{m,T_1,T_2}(\sigma_0) + \tilde{S}_{m,T_1,T_2}(\sigma_0) \int_{\sigma_0}^{\sigma_1} \log(x)x^{\sigma-\sigma_1} d\sigma \\
&=\tilde{S}_{m,T_1,T_2}(\sigma_0)  -\tilde{S}_{m,T_1,T_2}(\sigma_1) \\
\end{align*}
which recovers the trivial bound
\[ 
 \sum_{\substack{T_1 <\gamma\leq T_2 \\  \sigma_0  < \beta  \leq  \sigma_1 }} \frac{1}{\gamma^m} x^{\beta-1} \leq \tilde{S}_{m,T_1,T_2}(\sigma_0)  x^{\sigma_1-1} .
 \]
 \end{Rem}
For any fixed $x$ numerical integration could improve the trivial bound; the following proposition shows how to improve it abstractly.

\begin{prop}\label{prop:tightintermediate}
Let $T_1 > e$, $1/2 < \sigma_0 <\sigma_1\leq 1$ and suppose
\[ N(\sigma,T) \leq \sum_i c_i T^{\tilde{p}_i(1-\sigma)}\log(T)^{\hat{q}_i + \tilde{q}_i(1-\sigma)} \]
for $\sigma_1 \geq \sigma \geq \sigma_0$ and $T>T_1$ and with $m>0$, $c_i\geq 0$, $\hat{q}_i$ a non-negative integer, $\tilde{p}_i\geq 0$, $\tilde{q}_i\ge 0$, $\tilde{q}_i(1-\sigma_0)\leq 1$ and $m-\tilde{p}_i(1-\sigma_0)>0$. Let $r\geq 0$ be an integer.
Then for $x\geq x_0 > 1$ with $\log(x_0)> \tilde{p}_i\log(T_1) + \tilde{q}_i\log\log(T_1)$ for all $i$,
\begin{align*}  \sum_{\substack{T_1 <\gamma \\  \sigma_0  < \beta \leq \sigma_1  }} \frac{x^{\beta-1}}{\gamma^m} \leq B_0(m,\sigma_0,T_1,\infty)\,x^{\sigma_0-1} + \sum_i \frac{c_i}{T_1^m\log(T_1)} \Bigg(&\overline{B}(m,\sigma_0,\sigma_1,T_1,\tilde{p}_i,\hat{q}_i,\tilde{q}_i,r,x_0)\,x^{\sigma_1-1} \\
&- \underline{B}(m,\sigma_0,T_1,\tilde{p}_i,\hat{q}_i,\tilde{q}_i,r,x)\,x^{\sigma_0-1}\Bigg) \end{align*}
where $B_0$ is as in \eqref{def-B4} (with $p_i=\tilde{p}_i(1-\sigma_0)$, $q_i=\hat{q}_i+\tilde{q}_i(1-\sigma_0)$ and $T_2=\infty$),
\begin{align*}
\overline{B}(m,\sigma_0,\sigma_1,T,\tilde{p},\hat{q},\tilde{q},r,x) &=
(T^{\tilde{p}}\log(T)^{\tilde{q}})^{(1-\sigma_1)}\bigg(\sum_{j=0}^{r-1} \frac{Q_{m,T,\hat{q},\tilde{q},\tilde{p}}^{(j)}(1-\sigma_1)\log(x)}
{\left(\log(x) - \tilde{p}\log(T) - \tilde{q}\log\log(T)\right)^{j+1}}
\\& \qquad + \frac{Q_{m,T,\hat{q},\tilde{q},\tilde{p}}^{(r)}(1-\sigma_0)\log(x)}
{\left(\log(x) - \tilde{p}\log(T) - \tilde{q}\log\log(T)\right)^{r+1}}\bigg)
\end{align*} 
and
\begin{align*}
\underline{B}(m,\sigma_0,T,\tilde{p},\hat{q},\tilde{q},r,x) &=
(T^{\tilde{p}}\log(T)^{\tilde{q}})^{(1-\sigma_0)}\bigg(\sum_{j=0}^{r} \frac{Q_{m,T,\hat{q},\tilde{q},\tilde{p}}^{(j)}(1-\sigma_0)\log(x)}
{\left(\log(x) - \tilde{p}\log(T) - \tilde{q}\log\log(T)\right)^{j+1}}\bigg) \ \geq 0
\end{align*}
with $Q^{(j)}$ denoting the $j$th derivative with respect to $a$ of
 \[ Q_{m,T,\hat{q},\tilde{q},\tilde{p}}(a) = \frac{mP_{\hat{q}+1}(\tilde{q}a,\log(T)(m-\tilde{p}a))}{
 (m-\tilde{p}a)^{\hat{q}+2}} .
 \]
\end{prop}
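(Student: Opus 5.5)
We will start from the second form of the previous proposition, with $T_2=\infty$:
\[ \sum_{\substack{T_1<\gamma\\ \sigma_0<\beta\leq\sigma_1}}\frac{x^{\beta-1}}{\gamma^m}\leq \tilde S(\sigma_0)x^{\sigma_0-1}+\int_{\sigma_0}^{\sigma_1}\tilde S(\sigma)\log(x)x^{\sigma-1}\,d\sigma. \]
We take $\tilde S(\sigma)=B_0(m,\sigma,T_1,\infty)$, obtained from the intermediate-zeros proposition applied with $p_i=\tilde p_i(1-\sigma)$ and $q_i=\hat q_i+\tilde q_i(1-\sigma)$. With $T_2=\infty$ the boundary term vanishes because $m-p_i>0$. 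The first term is then exactly $B_0(m,\sigma_0,T_1,\infty)x^{\sigma_0-1}$. For the integral, Lemma \ref{lem:Qbound} (valid since $0\le\tilde q_i(1-\sigma)\le 1$ and $m-\tilde p_i(1-\sigma)>0$ on the range, these being monotone in $\sigma$) bounds each summand of $\tilde S(\sigma)$ by
\[ \frac{c_i}{T_1^m\log T_1}\,Q_i(1-\sigma)\,e^{(1-\sigma)\lambda_i},\qquad \lambda_i:=\tilde p_i\log T_1+\tilde q_i\log\log T_1 , \]
where $Q_i$ denotes the function $Q_{m,T_1,\hat q_i,\tilde q_i,\tilde p_i}$.

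It therefore suffices to bound, for each $i$, the integral $\log(x)\int_{\sigma_0}^{\sigma_1}Q_i(1-\sigma)e^{(1-\sigma)\lambda_i}x^{\sigma-1}d\sigma$. Substituting $a=1-\sigma$ and writing $L=\log x-\lambda_i>0$ (positive by the hypothesis on $x_0$, since $x\ge x_0$), this becomes
\[ \log(x)\int_{a_1}^{a_0}Q_i(a)e^{-aL}\,da,\qquad a_1=1-\sigma_1,\ a_0=1-\sigma_0. \]
We integrate by parts $r$ times, each time integrating $e^{-aL}$ and differentiating $Q_i$. This yields
\[ \sum_{j=0}^{r-1}\frac{Q_i^{(j)}(a_1)e^{-a_1L}-Q_i^{(j)}(a_0)e^{-a_0L}}{L^{j+1}}+\frac{1}{L^{r}}\int_{a_1}^{a_0}Q_i^{(r)}(a)e^{-aL}\,da. \]
For the remainder we use that $Q_i^{(r+1)}\ge0$ (the final clause of Lemma \ref{lem:Qbound}), so $Q_i^{(r)}$ is increasing and bounded by $Q_i^{(r)}(a_0)$. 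The remaining integral is then $\le Q_i^{(r)}(a_0)(e^{-a_1L}-e^{-a_0L})/L$. Multiplying by $\log x$ and noting $e^{-aL}=x^{-a}e^{a\lambda_i}=x^{\sigma-1}(T_1^{\tilde p_i}\log(T_1)^{\tilde q_i})^{1-\sigma}$, the positive terms at $a_1$, together with $Q^{(r)}(a_0)e^{-a_1L}$, assemble into $\overline{B}\,x^{\sigma_1-1}$. The negative terms at $a_0$ ($j=0,\dots,r-1$, plus the $j=r$ term $-Q^{(r)}(a_0)e^{-a_0L}/L^{r+1}$) assemble into $-\underline{B}\,x^{\sigma_0-1}$. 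Nonnegativity of $\underline B$ follows from $Q^{(j)}\ge0$ and $L>0$.

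The main obstacle is replacing $x$ by $x_0$ in $\overline B$ while keeping $x$ in $\underline B$. For this we observe that each coefficient $\log(x)/(\log x-\lambda_i)^{j+1}$ is decreasing in $\log x$ on $\log x>\lambda_i$ when $\lambda_i\ge0$ (since $j+1\ge1$; the derivative has sign of $(\log x-\lambda_i)-(j+1)\log x\le 0$). Hence, as all $Q^{(j)}\ge0$, $\overline B(\dots,x)\le\overline B(\dots,x_0)$. Here $\lambda_i\ge0$ requires $T_1>e$, which is assumed. Keeping $x$ in $\underline B$ is harmless, since it is subtracted exactly. Summing over $i$ with the weights $c_i/(T_1^m\log T_1)$ gives the claimed bound. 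One should also double-check the $\sigma_1=1$ endpoint ($a_1=0$) and that the Lemma's hypotheses $\tilde q_i(1-\sigma)\le1$ hold uniformly, which follows from $\sigma\ge\sigma_0$.
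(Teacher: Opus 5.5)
Your proposal is correct and follows essentially the same route as the paper: start from the integrated form $\tilde S(\sigma_0)x^{\sigma_0-1}+\int_{\sigma_0}^{\sigma_1}\tilde S(\sigma)\log(x)x^{\sigma-1}\,d\sigma$, bound $\tilde S$ with Lemma~\ref{lem:Qbound}, integrate by parts $r$ times, and control the remainder using that the derivatives of $Q$ are non-negative. Your explicit monotonicity argument for replacing $x$ by $x_0$ in $\overline{B}$ (the coefficient $\log(x)/(\log x-\lambda_i)^{j+1}$ is decreasing because $\lambda_i\ge 0$ when $T_1>e$) supplies a step that the paper's terse proof leaves implicit and only mentions in the remark following the proposition.
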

\begin{Rem}
    For $T_1 \ge e$ the function $\overline{B}(m,\sigma_0,\sigma_1,T_1,\tilde{p}_i,\hat{q}_i,\tilde{q}_i,r,x_0)$ is non-increasing in $\sigma_0$, $\sigma_1$, and $x_0$, hence the value at some point can be used uniformly for larger values.
In contrast the function $\underline{B}(m,\sigma_0,T,\tilde{p},\hat{q},\tilde{q},r,x)$ is positive and non-increasing in $x$, so the coefficient of $x^{\sigma_0-1}$ above is non-decreasing in $x$ and cannot be replaced by its value at $x_0$. For a bound uniform in $x\geq x_0$ one may replace $\underline{B}(\ldots,x)$ by its limit $(T^{\tilde{p}}\log(T)^{\tilde{q}})^{(1-\sigma_0)}Q_{m,T,\hat{q},\tilde{q},\tilde{p}}(1-\sigma_0)$ as $x\to\infty$ (only the $j=0$ term survives); by Lemma~\ref{lem:Qbound} the resulting coefficient,
\[  B_0(m,\sigma_0,T_1,\infty) - \sum_i \frac{c_i}{T_1^m\log(T_1)}(T_1^{\tilde{p}_i}\log(T_1)^{\tilde{q}_i})^{(1-\sigma_0)}Q_{m,T_1,\hat{q}_i,\tilde{q}_i,\tilde{p}_i}(1-\sigma_0), \]
 is non-positive, so the $x^{\sigma_0-1}$ term may also simply be discarded.
\end{Rem}
\begin{Rem}
  The condition $\tilde{q}_i\ge 0$ can be relaxed at the expense of replacing the value of $Q^{(r)}(1-\sigma_0)$ in both formulas with its maximum on the interval, which  will no longer obviously be at an endpoint.
\end{Rem}
\begin{proof}
From 
\[
  \sum_{\substack{T_1 <\gamma\\  \sigma_0  < \beta \leq \sigma_1  }} \frac{x^{\beta-1}}{\gamma^m}  \leq  \tilde{S}_{m,T_1,\infty}(\sigma_0)x^{\sigma_0-1}   + \int_{\sigma_0}^{\sigma_1} \tilde{S}_{m,T_1,\infty}(\sigma)\log(x)x^{\sigma-1}d\sigma.
 \]
 and writing
 \begin{align*}
     \tilde{S}_{m,T_1,\infty}(\sigma) &=
 \sum_i c_i\frac{m\Gamma(\hat{q}_i+\tilde{q}_i(1-\sigma)+1,(m-\tilde{p}_i(1-\sigma))\log(T_1))}{(m-\tilde{p}_i(1-\sigma))^{\hat{q}_i +1 + \tilde{q}_i(1-\sigma)}} \\&
 < \sum_i c_im\frac{P_{\hat{q}_i+1}(\tilde{q}_i(1-\sigma),\log(T_1)(m-\tilde{p}_i(1-\sigma)))}{
 \log(T_1)^{1-\tilde{q}_i(1-\sigma)}(m-\tilde{p}_i(1-\sigma))^{\hat{q}_i+2}T_1^{m-\tilde{p}_i(1-\sigma)}
 }\\
 &= \frac{1}{T_1^m\log(T_1)} \sum_i c_iQ_{m,T_1,\hat{q}_i,\tilde{q}_i,\tilde{p}_i}(1-\sigma) \left(\log(T_1)^{\tilde{q}_i} T_1^{\tilde{p}_i}\right)^{1-\sigma}
  \end{align*} 
  we thus have the bound
 \[ 
   \sum_{\substack{T_1 <\gamma\\  \sigma_0  < \beta \leq \sigma_1  }} \frac{x^{\beta-1}}{\gamma^m}  \leq  \tilde{S}_{m,T_1,\infty}(\sigma_0)x^{\sigma_0-1}   + \frac{\log(x)}{T_1^m\log(T_1)}\sum_i c_i\int_{\sigma_0}^{\sigma_1} Q_{m,T_1,\hat{q}_i,\tilde{q}_i,\tilde{p}_i}(1-\sigma)\left(\frac{x}{T_1^{\tilde{p}_i}\log(T_1)^{\tilde{q}_i}} \right)^{\sigma-1} d\sigma.
 \] 
 Repeated integration by parts essentially gives the result, except that at the final stage one uses that the higher derivatives of $Q(a)$ are all positive, so that we may uniformly bound the relevant quantity on the interval.
\end{proof}

\begin{Rem}
We aim to compare the quality of the trivial bound and the strategy above.

To be concrete we can consider $m=1$,  $T_1=H_0=3\,000\,175\,332\,800$, $T_2=\infty$, $\sigma_0=0.9$, $\sigma_1\ge 0.9$, $q(\sigma)=3+2(1-\sigma)$, and $p(\sigma)=\frac{8}{3}(1-\sigma)$. For simplicity we take $c(\sigma)=1$, noting this value affects every result linearly. In this case we would have
\[ \sum_{\substack{T_1 <\gamma\leq T_2 \\  \sigma_0  < \beta  \leq  \sigma_1 }} \frac{1}{\gamma^m} x^{\beta-1} \leq 5.238\cdot10^{-5}x^{\sigma_1-1}.\]
Taking $x=e^{100}$ and $\sigma_1=0.99$ gives a bound of $1.927\cdot 10^{-5}$ whereas $x=e^{1000}$ gives $2.378\cdot10^{-9}$.
 
Directly performing integration to obtain upper bounds for  $-\int_{\sigma_0}^{\sigma_1} x^{\sigma-\sigma_1}d\tilde{S}_{m,T_1,T_2}(\sigma)$ allows for improvements. Using the same example
\[  \sum_{\substack{T_1 <\gamma \leq T_2 \\  \sigma_0  < \beta  \leq  \sigma_1 }}  \frac{x^{\beta-1}}{\gamma^m}\leq  x^{\sigma_0-1}\tilde{S}_{m,T_1,T_2}(\sigma_0) + \int_{\sigma_0}^{\sigma_1} \tilde{S}_{m,T_1,T_2}(\sigma) dx^{\sigma-1} \]
where
\[ \tilde{S}_{m,T_1,T_2}(\sigma)
= \frac{1}{(1-p(\sigma))^{q(\sigma)+1}}\Gamma(q(\sigma)+1,(1-p(\sigma))\log(H_0)). \]
Now again with $x=e^{100}$ and $\sigma_1=0.99$ this evaluates numerically to approximately $4.271\cdot 10^{-8}$, and at $x=e^{1000}$ we obtain $1.037\cdot10^{-12}$.

However, to obtain asymptotic results where $x$ varies we would use Proposition \ref{prop:tightintermediate} with the same examples for
$x=e^{100}$ with $r=0$ we obtain $5.207\cdot 10^{-8}$, for $r=1$ we obtain
$4.701\cdot 10^{-8}$, and $r=2$ we obtain  $4.568\cdot 10^{-8}$. Whereas for $x=e^{1000}$ with $r=0$ we obtain $1.436\cdot 10^{-12}$, for $r=1$ we obtain
$1.040\cdot 10^{-12}$, and $r=2$ we obtain $1.037\cdot 10^{-12}$.
Note that the value does not necessarily improve with $r$, as the higher derivatives of $Q$ eventually grow quickly (at $x=e^{100}$ the bound is minimal for $r=3$ and worsens from $r=4$ on).
\end{Rem}

We now provide an easy-to-use version of Proposition \ref{prop:tightintermediate} by evaluating with specific parameters and using that $H_0$ is the Riemann Hypothesis verification height.
\begin{cor}\label{cor:intermediate}
Suppose that we have
\[ N(\sigma,T) \leq \mathcal{U}T^{\frac{8}{3}(1-\sigma)}\log(T)^{3+2(1-\sigma)} + \mathcal{V}\log(T)^2 \]
for $\sigma\in [\sigma_0,\sigma_1]$ and $T\geq H_0$.
Suppose $0.99 \le \sigma_1 \le 1$. For $\sigma_0=0.9$ and $\log(x)>100$,
\[  \sum_{  \sigma_0  < \beta  \leq  \sigma_1 } \frac{1}{\gamma} x^{\beta-1} <
\left(
1.701359
\cdot 10^{-7} \mathcal{U}
+ 2.949343\cdot 10^{-10}\,\mathcal{V}
\right)x^{\sigma_1-1},
\]
while for $\sigma_0=0.75$ and $\log(x)>100$ the same bound holds with $1.701359\cdot 10^{-7}$ replaced by $4.594096\cdot 10^{-7}$.
We can reduce $\sigma_0$ without much harm to the bounds once $x$ is larger: for $\sigma_0=0.75$ and $\log(x)>200$,
\[ \sum_{  \sigma_0  < \beta  \leq  \sigma_1 } \frac{1}{\gamma} x^{\beta-1} <
\left(
3.677198\cdot 10^{-8}  \mathcal{U} +
2.949343\cdot 10^{-10}\,\mathcal{V}
\right) x^{\sigma_1-1},
\]
and if instead $\sigma_0=0.75$ and $\log(x)>1000$ then
\[ \sum_{  \sigma_0 < \beta  \leq  \sigma_1 } \frac{1}{\gamma} x^{\beta-1} <
\left(
2.282680 \cdot 10^{-8}  \mathcal{U} +
2.949343\cdot 10^{-10}\,\mathcal{V}
\right)x^{\sigma_1-1}.
\]
The coefficients are those of Proposition~\ref{prop:tightintermediate} with $r=1$ (for $\log(x)>100$), $r=12$ (for $\sigma_0=0.75$, $\log(x)>200$) and $r=16$ otherwise, rounded up in the last digit. The non-positive $x^{\sigma_0-1}$ term of the proposition has been discarded (for $\sigma_0=0.9$ the coefficient would be $-1.348797\cdot10^{-11}\,\mathcal{U}$).
\end{cor}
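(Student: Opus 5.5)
This is a direct specialisation of Proposition~\ref{prop:tightintermediate}, taking $m=1$ and $T_1=H_0$. The zero-density hypothesis gives two terms. The first has $c_1=\mathcal{U}$, $\tilde{p}_1=8/3$, $\hat{q}_1=3$, $\tilde{q}_1=2$. The second has $c_2=\mathcal{V}$, $\tilde{p}_2=\tilde{q}_2=0$, $\hat{q}_2=2$.

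\textbf{Reduction to heights above $H_0$.} Zeros with $0<\gamma\le H_0$ all lie on the half line. Since $\sigma_0>1/2$, they contribute nothing to the sum over $\sigma_0<\beta\le\sigma_1$. The sum therefore equals the one over $\gamma>H_0$, which is the setting of the proposition.

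\textbf{Checking the hypotheses.}
\begin{itemize}
\item For $\sigma_0\ge 0.75$ we have $\tilde{q}_1(1-\sigma_0)\le 1/2\le 1$.
\item Likewise $1-\tfrac{8}{3}(1-\sigma_0)\ge 1/3>0$.
\item The condition on $x_0$ needs $\log x_0>\tfrac83\log H_0+2\log\log H_0\approx 77.6+6.8\approx 84.4$. This is satisfied by $\log x_0=100$, and a fortiori by $200$ and $1000$.
\end{itemize}
By the Remark after the proposition, the coefficient of $x^{\sigma_0-1}$ is non-positive uniformly in $x\ge x_0$, so that term may be discarded.

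\textbf{Using monotonicity.} The coefficient of $x^{\sigma_1-1}$ is
\[ \sum_i c_i\,\overline{B}(1,\sigma_0,\sigma_1,H_0,\dots,r,x_0)/(H_0\log H_0). \]
It is non-increasing in $\sigma_1$ and in $x_0$. Hence evaluating at $\sigma_1=0.99$ and at the stated $x_0$ gives a bound valid uniformly for $\sigma_1\in[0.99,1]$ and $x\ge x_0$.

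\textbf{The $\mathcal{V}$ term.} Here $\tilde{p}=\tilde{q}=0$, so $Q$ is independent of $a$ and all its derivatives vanish. The coefficient collapses to
\[ \frac{Q(0)\log x}{H_0\log H_0\,\log x}=\frac{P_3(0,\log H_0)}{H_0\log H_0}. \]
This is independent of $x_0$, $\sigma_0$ and $r$, which explains why the same constant $2.949343\cdot10^{-10}$ appears in every case. (The leftover $B_0(\ldots)x^{\sigma_0-1}$ piece combines with the $-\underline{B}$ term to give a non-positive contribution, by Lemma~\ref{lem:Qbound}.)

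\textbf{The $\mathcal{U}$ term.} For this term I would compute $Q_{1,H_0,3,2,8/3}^{(j)}(a)$ symbolically as a rational function in $a$. I would then evaluate $\overline{B}$ in each case:
\begin{itemize}
\item $r=1$ for $\sigma_0\in\{0.9,0.75\}$ with $\log x_0=100$;
\item $r=12$ for $\sigma_0=0.75$ with $\log x_0=200$;
\item $r=16$ for $\sigma_0=0.75$ with $\log x_0=1000$.
\end{itemize}
Each value is rounded up in the last digit.

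\textbf{Main obstacle.} The hard part is the reliable numerics, since these are high-order derivatives of $Q$ with a large parameter $\log H_0\approx 28.7$. I would carry them out in interval or high-precision arithmetic to guarantee the rounding is upward. I would also cross-check the result against direct numerical integration of $\int\tilde S\,dx^{\sigma-1}$ at the endpoint values, confirming that the chosen $r$ gives the quoted constants.
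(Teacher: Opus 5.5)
Your proposal is correct and follows the paper's own argument. The paper obtains the corollary by specialising Proposition~\ref{prop:tightintermediate} with $m=1$, $T_1=H_0$ and the two density terms exactly as you list them, evaluating $\overline{B}$ at $\sigma_1=0.99$ and the stated $x_0$ and $r$ (justified by the monotonicity Remark), discarding the non-positive $x^{\sigma_0-1}$ term, and rounding up; your closed form $P_3(0,L)/(H_0L)=(L^2+2L+2)/H_0$ with $L=\log H_0$ does give $2.949343\cdot 10^{-10}$. One harmless slip: $\tfrac{8}{3}\log H_0\approx 76.6$, so the threshold for $\log x_0$ is about $83.3$ rather than $84.4$, which is still below $100$.
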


\section{Zeros near the 1-Line}\label{sec:zeros1}
In this section we aim to study the sum
\[ \sum_{\substack{0 <\gamma \leq T \\ \sigma < \beta <  1}} \frac{x^{\beta-1}}{\gamma^m} \]
 where $m$ is fixed (and typically small), $x>4$ is fixed (and often large), and where $T=\infty$ is an admissible value.  An additional complexity we wish to handle is to provide bounds that are uniform while allowing for $T=T(x)$ and $\sigma=\sigma(x)$ chosen in such a way as to balance contributions from other error terms. In particular, $\sigma(x) >3/4$ is chosen to balance the contribution from the intermediate zeros studied in the previous section.

Assuming $H_0 \ge e^{w_0}$ and $v(w)$ are as above, that $\sigma>1/2$ and $x>1$ we have
\[ \Sigma_{\sigma}^{m} := \sum_{\substack{0 <\gamma\leq T \\ \sigma < \beta <  1}} \frac{x^{\beta-1}}{\gamma^m}  \leq \sum_{\substack{H_0<\gamma \leq T \\ \beta > \sigma } } \frac{x^{-v(\log(\gamma))}}{\gamma^m}. \]
Writing the last sum as a  Riemann-Stieltjes integral, then integrating by parts we get:
\[\Sigma_{\sigma}^{m} \leq  \int_{H_0}^T  \frac{x^{-v(\log(t))}}{t^m} dN(\sigma,t)
=\frac{x^{-v(\log(T))}}{T^m}  N(\sigma,T) - \int_{H_0}^T N(\sigma,t) {d}\Big( \frac{x^{-v(\log(t))}}{t^m} \Big) \]
In the following proposition we will show that the function $\frac{x^{-v(\log(t))}}{t^m}$ has a unique critical point after which it is decreasing; this will allow us to bound the last integral.

\begin{prop}\label{prop:tm}
Let $G_x(t)=\frac{x^{-v(\log(t))}}{t^m}$ with $m>0$. Then for all $x>\exp{(\frac{-m}{v'(w_0)})}$, there is a unique  $t_m=t_m(x)>e^{w_0}$ such that $G_x'(t_m)=0$, and for $t>e^{w_0}$ one has $G_x'(t)<0$ if and only if $t>t_m$.
\end{prop}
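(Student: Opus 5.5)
Differentiate in the logarithmic variable. Write $t=e^w$ and $L=\log(x)>0$, so that $\log G_x(e^w) = -L\,v(w) - m w$. Since $t\mapsto e^w$ is an increasing bijection and $G_x>0$, the sign of $G_x'(t)$ equals the sign of
\[ h(w) := \frac{d}{dw}\log G_x(e^w) = -L\,v'(w) - m. \]
It therefore suffices to show that $h$ has a unique zero $w_m>w_0$ on $(w_0,\infty)$, with $h>0$ on $(w_0,w_m)$ and $h<0$ on $(w_m,\infty)$; then $t_m=e^{w_m}>e^{w_0}$ works.

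First, by hypothesis (3), $v''(w)>0$ for $w\geq w_0$. Hence $v'$ is strictly increasing, and $h'(w)=-L\,v''(w)<0$, so $h$ is strictly decreasing and has at most one zero. Second, at the left endpoint we have $h(w_0) = -L\,v'(w_0) - m$. Since $v'(w_0)<0$ by hypothesis (2), the condition $x>\exp(-m/v'(w_0))$ means $L > -m/v'(w_0)$, that is, $-L\,v'(w_0)>m$. So $h(w_0)>0$. Third, as $w\to\infty$, hypothesis (2) gives $v'(w)\to 0$, hence $h(w)\to -m<0$. By continuity of $h$ (since $v\in C^2$) and the intermediate value theorem, there is a zero $w_m\in(w_0,\infty)$. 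Strict monotonicity then makes it unique and gives the sign pattern: $h>0$ before $w_m$ and $h<0$ after.

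Finally, translate back. For $t>e^{w_0}$ we have
\[ G_x'(t) = \frac{G_x(t)}{t}\,h(\log t), \]
so $G_x'(t)<0$ exactly when $\log t>w_m$, i.e. when $t>t_m$, and $G_x'(t_m)=0$. Any other critical point $t>e^{w_0}$ would force $h(\log t)=0$, contradicting uniqueness.

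There is no serious obstacle here. The only points needing care are the strictness of the inequality at $w_0$, which comes directly from the strict inequality assumed for $x$, and ensuring that $w_m>w_0$ is strict, which follows from $h(w_0)>0$.
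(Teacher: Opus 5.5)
Your proof is correct and is essentially the paper's argument. The paper writes $G_x'(t)=-t^{-1-m}x^{-v(\log(t))}\bigl[m+\log(x)v'(\log(t))\bigr]$ and observes that $t\mapsto -m/v'(\log(t))$ is an increasing bijection from $(e^{w_0},\infty)$ onto $(-m/v'(w_0),\infty)$; this uses the same facts you use ($v'$ increasing, $v'<0$, $v'\to 0$) as your analysis of $w\mapsto -\log(x)v'(w)-m$ at fixed $x$, and your version additionally spells out the sign pattern of $G_x'$, which the paper leaves implicit.
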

\begin{proof}
We observe that $G_x'(t)$ equals
\begin{equation*}
-t^{-1-m}x^{-v(\log(t))}\left[m+\log(x)v'(\log(t))\right]
\end{equation*}
Therefore, $G_x'(t)=0$ if and only if $\log(x)=h(t)$, $t=t(x)$, where
\begin{equation*}
h(t)=\frac{-m}{v'(\log(t))}.
\end{equation*}
Observe that $h(t)$ is increasing for $t>e^{w_0}$ since $v'(w)$ is increasing for $w>w_0$, and $\lim_{t\to\infty}h(t)=\infty$, as $\lim_{w\to\infty}v'(w)=0$. Thus $h:(e^{w_0},\infty)\to(h(e^{w_0}),\infty)$ defines a bijection. This fact implies that for every $x>e^{h(e^{w_0})}$ there is a unique $t_m=t_m(x)$ satisfying $h(t_m)=\log(x)$. This $t_m$ satisfies the desired properties.
\end{proof}

\begin{defn}\label{defn:um}
We define $t_m(x)$ as in Proposition \ref{prop:tm}. Implicitly, $t_m$ depends on $v$. We shall also define $u_m=\log(t_m)$.
\end{defn}

The case $t_m \leq H_0$ can be reduced to the case $T_0 \leq H_0 \leq t_m \leq T$ by simply replacing $t_m$ by $H_0$ and noting that the difference is a positive integral. Consequently, for simplicity of exposition we shall assume that $T_0 \leq H_0 \leq t_m \leq T$. We have that $\int_{H_0}^{t_m} N(\sigma,t) \;{dG_x}\geq 0$. Combining this with the zero-density bound assumption and integrating by parts again we get
\[\Sigma_{\sigma}^{m} \leq\ \frac{x^{-v(\log(T))}}{T^m}  \tilde{N}(\sigma,T) - \int_{t_m}^T \tilde{N}(\sigma,t) \;dG_x = \frac{x^{-v(\log(t_m))}}{t_m^m}  \tilde{N}(\sigma,t_m) + \int_{t_m}^T G_x(t)\;d\tilde{N}. \]
Combining this with \[ \tilde{N}(\sigma,T) = \sum_{i}  c_i T^{p_i(\sigma)}\log(T)^{q_i(\sigma)} \]
and \[ \int_{t_m}^T G_x(t)\;d\Big(t^p\log(t)^q\Big) = p\int_{t_m}^{T} \frac{x^{-v(\log(t))}}{t^{m+1-p}}\log(t)^q\;dt +  q\int_{t_m}^{T} \frac{x^{-v(\log(t))}}{t^{m+1-p}}\log(t)^{q-1}\;dt\]
we obtain \begin{align}\Sigma_{\sigma}^{m} \le \sum_{i}c_i (E_{i1}(x) + E_{i2}(x) + E_{i3}(x)), \label{eq:edge} \end{align}
where
\begin{align*}
E_{i1}(x) &:=  \frac{x^{-v(\log(t_m))}}{t_m^m} t_m^{p_i(\sigma(x))}\log(t_m)^{q_i(\sigma(x))} \\
E_{i2}(x) &:= p_i(\sigma(x))\int_{t_m}^{T} \frac{x^{-v(\log(t))}}{t^{m+1-p_i(\sigma(x))}}\log(t)^{q_i(\sigma(x))}dt \\
E_{i3}(x) &:= q_i(\sigma(x))\int_{t_m}^{T} \frac{x^{-v(\log(t))}}{t^{m+1-p_i(\sigma(x))}}\log(t)^{q_i(\sigma(x))-1}\;dt.
\end{align*}
We observe that essentially all we have to do is to bound the integral
\begin{align*}
I_{v,m,p,q}(x) := \int_{t_m}^{T} \frac{x^{-v(\log(t))}}{t^{m+1- p}}\log(t)^{q}dt.
\end{align*}
 Explicitly, $E_{i2}(x)=p_i(\sigma(x))I_{v,m,p_i(\sigma(x)),q_i(\sigma(x))}(x)$ and $E_{i3}(x)=q_i(\sigma(x))I_{v,m,p_i(\sigma(x)),q_i(\sigma(x))-1}(x)$.

\subsection{Bounds on $I_{v,m,p,q}(x) $ for Abstract $v$}\label{subsec:nearoneabstract}

We now focus our attention on bounding the integrals $I_{v,m,p,q}(x)$.  We first note that although the roles of $m$ and $p$ may seem redundant the term $t_m$ is defined in terms of $v$ and $m$, and does not depend on $p$. In this section we illustrate a very general approach to bounding these integrals. Throughout we assume $m>0$, $0\leq p<m$, $q\geq 0$ and $w_0>0$. For most results in this section the assumption $q \geq 0$ can be significantly weakened, often at the expense of needing to impose lower bounds on $x$ depending on $v$.
 As we do not need this in any of our applications, we do not pursue it here.

\begin{defn}\label{defn:Fxfx}
We define
\[  F_{x}(t) = F_{x,v,m,p,q}(t) :=\frac{x^{-v(\log(t))}}{t^{m-p}}\log(t)^q  \]
and
\[ f_{x}(u) = f_{x,v,m,p,q}(u) :=  -(m-p)u - \log(x)v(u) +  q\log(u) \]
so that $F_x(t) = \exp(f_x(\log(t)))$.
Note that by taking the change of variable $u=\log(t)$ in $ I_{v,m,p,q}(x)$, we get that:
\[   I_{v,m,p,q}(x) =\int_{u_m}^{\log(T)} \exp(f_x(u)) \;du.\]
\end{defn}

The assumptions on $v$ impose the following properties on $f_x$ slightly generalizing Proposition \ref{prop:tm}.

\begin{prop}\label{prop:fx}
Let $f_x$ be defined as above, then $f_x\in C^{2}([w_0,+\infty))$ and the following hold:
\begin{enumerate}
\item For all $x>\exp{(\frac{-m}{v'(w_0)})}$, there is a unique $u_{m-p,q}(x)\geq u_m(x)$ such that $f_x'(u_{m-p,q}(x))=0$ and, for $u>w_0$, $f_x'(u)<0$ if and only if $u>u_{m-p,q}(x)$; moreover $u_{m-p,q}(x)>u_m(x)$ unless $p=q=0$, in which case $u_{m,0}=u_m$.
\item $v'(u_{m-p,q}(x))\in O(\frac{1}{\log(x)})$, explicitly $\lim\limits_{\substack{x\to \infty}} \log(x)\, v'(u_{m-p,q}(x))=-m+p$.
\item  For $x>1$ and $u>w_0$, $f''_x(u)<0$, $f''_x$ is increasing and
\[ \lim\limits_{\substack{x\to \infty}} u_{m-p,q}(x)\, f_x''(au_{m-p,q}(x))=C_a:=(m-p)c_a< 0, \]
for all $a\geq 1$.
\end{enumerate}
\end{prop}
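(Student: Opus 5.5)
I will work directly from the explicit derivatives
\[ f_x'(u) = -(m-p) - \log(x)v'(u) + \frac{q}{u},\qquad f_x''(u) = -\log(x)v''(u) - \frac{q}{u^2}. \]
Regularity is immediate: $v\in C^2([w_0,\infty))$ and $\log u$ is smooth for $u\geq w_0>0$, so $f_x\in C^2$. For part (3), $v''>0$ and $q\geq0$ give $f_x''<0$ for $x>1$. Moreover $v''$ is decreasing and $q/u^2$ is decreasing, so $-\log(x)v''(u)-q/u^2$ is increasing.

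For part (1), I will use that $f_x''<0$, so $f_x'$ is strictly decreasing on $(w_0,\infty)$. As $u\to\infty$ we have $v'(u)\to0$ and $q/u\to0$, so $f_x'(u)\to -(m-p)<0$. At $u=u_m(x)$, Proposition \ref{prop:tm} (applicable since $x>\exp(-m/v'(w_0))$) gives $m+\log(x)v'(u_m)=0$. Hence $f_x'(u_m)=p+q/u_m\geq 0$, with equality exactly when $p=q=0$. By monotonicity and the intermediate value theorem, $f_x'$ has a unique zero $u_{m-p,q}\geq u_m$, strictly larger unless $p=q=0$. Since $f_x'$ is decreasing, $f_x'(u)<0$ iff $u>u_{m-p,q}$.

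For part (2), I first show $u_{m-p,q}(x)\to\infty$. This follows because $u_{m-p,q}\geq u_m$, and $u_m\to\infty$ since $h$ in Proposition \ref{prop:tm} is a bijection onto $(h(e^{w_0}),\infty)$. Evaluating $f_x'=0$ at $u_{m-p,q}$ then gives
\[ \log(x)v'(u_{m-p,q}) = -(m-p)+\frac{q}{u_{m-p,q}}\to -(m-p). \]

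The last part of (3) is the main obstacle, since it needs a comparison between $\log x$ and $u$. Write $U=u_{m-p,q}(x)$. Then
\[ U f_x''(aU) = -\log(x)\,U v''(aU) - \frac{q}{a^2U}. \]
The second term tends to $0$. For the first, I substitute $\log(x) = \big(-(m-p)+q/U\big)/v'(U)$ from the critical point equation, giving
\[ \big((m-p)-q/U\big)\, v''(aU)\frac{U}{v'(U)}. \]
By hypothesis (3) on $v$, $v''(aU)\,U/v'(U)\to c_a$ as $U\to\infty$. The limit is therefore $(m-p)c_a=C_a$, which is negative because $c_a<0$ and $m>p$.
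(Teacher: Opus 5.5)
Your proposal is correct and follows essentially the same route as the paper's proof. You use the explicit formulas for $f_x'$ and $f_x''$, the observation $f_x'(u_m)=p+q/u_m\geq 0$ together with strict monotonicity, and the critical-point equation to reduce $u_{m-p,q}f_x''(au_{m-p,q})$ to the hypothesis $v''(aw)\,w/v'(w)\to c_a$. The only cosmetic difference is that you get $u_m\to\infty$ from the bijectivity of $h$, whereas the paper gets it from $v'(u_m)=-m/\log(x)\to 0$.
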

\begin{proof}
    The regularity of $f_x$ follows directly from the regularity of $v$, with first derivative:
    \[f'_x(u)=-m+p-\log(x)v'(u)+\frac{q}{u}\]
    and second derivative:
 \[f''_x(u)=-\log(x)v''(u)-\frac{q}{u^2}.\]
Recalling that for $u>w_0$, $v''(u)>0$ and decreases to $0$, we have that $f''_x<0$ and increases to $0$ (as $q\geq 0$), in particular $f'_x$ is strictly decreasing. For $x>\exp{(\frac{-m}{v'(w_0)})}$, $-\log(x)v'(\log(t_m))=m$, thus
\[f'_x(\log(t_{m}))=p+\frac{q}{\log(t_{m})}\geq 0\]
(with equality only when $p=q=0$) and
\[\lim_{u\to \infty} f_x'(u)=-m+p<0,\]
so there exists a unique $u_{m-p,q}=u_{m-p,q}(x)\geq\log(t_m)$, such that $f_x'(u_{m-p,q})=0$, and $f_x'(u)<0$ if and only if $u>u_{m-p,q}$. It follows directly that $u_{m-p,q} > u_m$ (unless $p=q=0$).
\par Note that $\lim\limits_{\substack{x\to \infty}} v'(\log(t_m))=\lim\limits_{\substack{x\to \infty}}\frac{-m}{\log(x)}=0$, so $\lim\limits_{\substack{x\to \infty}}t_m(x)=+\infty$, and so $\lim\limits_{\substack{x\to \infty}}u_{m-p,q}(x)=+\infty$. Now for $x$ big enough, $f'_x(u_{m-p,q}(x))=0$, so 
\[\log(x)v'(u_{m-p,q}(x))=-m+p+\frac{q}{u_{m-p,q}(x)},\]
consequently $\lim\limits_{\substack{x\to \infty}}\log(x)v'(u_{m-p,q}(x))=-m+p$.
\par Now for $a\geq 1$ we have:
\[u_{m-p,q}(x)f''_x(au_{m-p,q}(x))=-\log(x)u_{m-p,q} v''(au_{m-p,q})-\frac{q}{a^2u_{m-p,q}},\]
combining this with $v''(au)=O_a(v'(u)/u)$, $\log(x)v'(u_{m-p,q}(x))=O(1)$ and $u_{m-p,q}(x)\to\infty$, we get that
\begin{align*}\lim\limits_{\substack{x\to \infty}}u_{m-p,q}(x)f''_x(au_{m-p,q}(x))&=\lim\limits_{\substack{x\to \infty}}\Big(-\log(x)v'(u_{m-p,q}(x))\Big)\Big(\frac{u_{m-p,q}(x)v''(au_{m-p,q}(x))}{v'(u_{m-p,q})}\Big)\\&=(m-p)c_a\\&=C_a< 0.\qedhere \end{align*}
\end{proof}

\begin{Rem}
The quantity $u_m(x)$ of Definition~\ref{defn:um} is simply the value $u_{m,0}$ as defined above. In what follows we shall continue to use both notations which should not cause confusion.
\end{Rem}

\begin{prop}\label{prop:tailintegralabstract}
Suppose that $x>\exp(-m/v'(w_0))$. If $T(x)=t_{m-p,q}(x)^{1+\eta}$, for $\eta>0$, then
\[  \frac{1}{F_x(t_{m-p,q}(x))} \int_{T(x)}^\infty \frac{x^{-v(\log(t))}}{t^{m+1-p}}\log(t)^q  \,dt  <   \frac{1}{-f'_x((1+\eta)u_{m-p,q})} \] hence is bounded as $x\to \infty$.
\end{prop}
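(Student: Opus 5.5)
Substituting $u=\log t$ turns the integral into $\int_{(1+\eta)u_{m-p,q}}^\infty \exp(f_x(u))\,du$, since $dt/t = du$ and $F_x(t)=\exp(f_x(\log t))$. Write $u_*=u_{m-p,q}(x)$ and $u_1=(1+\eta)u_*$. The left-hand side then equals
\[ \int_{u_1}^\infty \exp\big(f_x(u)-f_x(u_*)\big)\,du . \]
The plan is to bound $f_x(u)-f_x(u_*)$ using only concavity of $f_x$ and the location of its maximum, both supplied by Proposition~\ref{prop:fx}.

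By Proposition~\ref{prop:fx}(1) and (3), $f_x$ is strictly concave on $(w_0,\infty)$ and has its unique critical point at $u_*$. Since $u_1>u_*$, we get $f_x'(u_1)<0$ and $f_x(u_1)<f_x(u_*)$. Concavity gives the tangent-line bound $f_x(u)\le f_x(u_1)+f_x'(u_1)(u-u_1)$ for $u\ge u_1$. Hence
\[ \int_{u_1}^\infty e^{f_x(u)-f_x(u_*)}\,du \le e^{f_x(u_1)-f_x(u_*)}\int_{u_1}^\infty e^{f_x'(u_1)(u-u_1)}\,du = \frac{e^{f_x(u_1)-f_x(u_*)}}{-f_x'(u_1)} < \frac{1}{-f_x'(u_1)}. \]
The last inequality is strict because $f_x(u_1)<f_x(u_*)$, by strict decrease of $f_x$ beyond $u_*$. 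This is the claimed bound.

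For boundedness as $x\to\infty$, we need $-f_x'(u_1)$ to stay bounded away from $0$. Recall $f_x'(u)=-(m-p)-\log(x)v'(u)+q/u$. As $x\to\infty$, $u_*\to\infty$, so $q/u_1\to 0$. Since $v'<0$, we have $-\log(x)v'(u_1)>0$, which gives $-f_x'(u_1)\ge (m-p)-q/u_1\to m-p>0$. So $-f_x'(u_1)$ is eventually at least $(m-p)/2$, and the bound is uniformly bounded for large $x$.

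The steps are routine. The only point to check is that the tangent-line bound applies on the whole ray, which uses concavity on all of $[u_1,\infty)\subset(w_0,\infty)$. Proposition~\ref{prop:fx}(3) provides this for $x>1$.
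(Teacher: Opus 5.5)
Your proof of the displayed inequality is correct and matches the paper's argument. You substitute $u=\log t$, use concavity of $f_x$ for the tangent-line bound at $u_1=(1+\eta)u_{m-p,q}$, integrate the exponential, and use that $f_x$ is maximal at $u_{m-p,q}$. The gap is in the ``hence bounded as $x\to\infty$'' part, where you have a sign error. From $f_x'(u)=-(m-p)-\log(x)v'(u)+q/u$ you get $-f_x'(u_1)=(m-p)+\log(x)v'(u_1)-q/u_1$. Since $v'<0$, the middle term is negative, so $v'<0$ yields $-f_x'(u_1)<(m-p)-q/u_1$, the reverse of your inequality. This negative term is also of the same size as $m-p$: at the critical point, exactly $-\log(x)v'(u_{m-p,q})=(m-p)-q/u_{m-p,q}$. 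So $-f_x'(u_1)$ is a difference of two nearly cancelling quantities, and whether it stays away from $0$ depends on how much $-\log(x)v'$ decays between $u_{m-p,q}$ and $(1+\eta)u_{m-p,q}$. Your claimed eventual lower bound $(m-p)/2$ is false in general. For $v(u)=cu^{-2/3}\log(u)^{-1/3}$ one finds $-f_x'(u_1)\to(m-p)\bigl(1-(1+\eta)^{-5/3}\bigr)$, which is about $0.147(m-p)$ for $\eta=0.1$.

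The missing ingredient is hypothesis (3) on $v''$, used through Proposition~\ref{prop:fx}(3). The monotonicity assumptions on $v'$ and $v''$ alone do not stop $v'$ from being nearly constant on $[u_{m-p,q},(1+\eta)u_{m-p,q}]$ along a sequence of $x$. The paper argues as follows. Since $f_x'(u_{m-p,q})=0$, the mean value theorem gives $-f_x'(u_1)=-f_x''(\xi)\,\eta u_{m-p,q}$ for some $\xi\in(u_{m-p,q},u_1)$. As $f_x''$ is negative and increasing, $-f_x'(u_1)\ge -\eta\,u_{m-p,q}f_x''((1+\eta)u_{m-p,q})$, and this lower bound tends to $-\eta C_{1+\eta}=-\eta(m-p)c_{1+\eta}>0$. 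Hence $1/(-f_x'(u_1))$ is eventually at most $2/(-\eta C_{1+\eta})$. Replace your last paragraph with this argument.
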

\begin{proof}
We set $U(x) = \log(T(x))$ and we perform the $u$ substitution $u=\log(t)$ so that
\[ \int_{T(x)}^\infty \frac{x^{-v(\log(t))}}{t^{m+1-p}}\log(t)^q \,dt= \int_{U(x)}^{\infty} \exp(f_x(u)) du  \]
We apply Taylor's theorem and use that $f'_x(U(x))<0$ and $f''_x<0$
\[ f_x(u) = f_x(U) + f'_x(U)(u-U) + \frac{f''_x(u')}{2}(u-U)^2 <  f_x(U) + f'_x(U)(u-U) \]
to get
\[ \int_{U(x)}^\infty \exp( f_x(u)) du < \int_{U(x)}^\infty \exp(f_x(U))\, \exp(f'_x(U)(u-U)) du =  \frac{1}{-f'_x(U(x))}F_x(T(x)).\]
Now we prove that  $\frac{1}{-f'_x(U(x))}$ is bounded; since moreover $\frac{F_x(T(x))}{F_x(t_{m-p,q}(x))}\leq 1$ (as $f_x$ attains its maximum at $u_{m-p,q}(x)$ by Proposition \ref{prop:fx}), combining this with the above inequality we get the desired result.
\par By Taylor's theorem
\[ -f'_x(U(x)) = -f_x''(u')(U(x)-u_{m-p,q}(x)) \]
for some $u'\in(u_{m-p,q}(x),U(x))$, and $f_x''$ is increasing, so
\[ -\eta u_{m-p,q}(x) f_x''((1+\eta)u_{m-p,q}(x))  < -f_x''(u')(U(x)-u_{m-p,q}(x))  < -\eta u_{m-p,q}(x)f_x''(u_{m-p,q}(x)), \]
and $\lim\limits_{\substack{x\to \infty}}u_{m-p,q}(x)\, f_x''((1+\eta)u_{m-p,q}(x))=C_{(1+\eta)}< 0$, so $\frac{1}{-f'_x(U(x))}$ is bounded.
\end{proof}
\begin{Rem}
For $\eta>0$ and  sufficiently large $x$ the ratio $\frac{F_x(T(x))}{F_x(t_{m-p,q}(x))}$ is decreasing in $x$; we do not establish this in full generality.
\end{Rem}

\begin{prop}\label{prop:mainintegralabstract}
Suppose that $x>\exp(-m/v'(w_0))$. Let  $\delta_{\rm low}(x) = (u_{m-p,q}(x)-u_{m}(x))\sqrt{-f_x''(u_{m-p,q}(x))/2}$ then
\[ \frac{\int_{t_m(x)}^{t_{m-p,q}(x)} \frac{x^{-v(\log(t))}}{t^{m+1-p}}\log(t)^q  \,dt}{ F_x( t_{m-p,q}(x))} \leq  \operatorname{erf}\left(\delta_{\rm low}(x) \right)\frac{\sqrt{\pi}}{\sqrt{-2f_x''(u_{m-p,q}(x))}} \]
and moreover if $T(x) > t_{m-p,q}(x)$ then
\[ \frac{\int_{t_{m-p,q}(x)}^{T(x)} \frac{x^{-v(\log(t))}}{t^{m+1-p}}\log(t)^q  \,dt}{ F_x( t_{m-p,q}(x))}<   \frac{\sqrt{\pi}}{\sqrt{-2f_x''(\log(T(x)))}}. \]
\end{prop}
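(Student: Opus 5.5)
Both bounds come from a quadratic Taylor bound on $f_x$ around its maximum, after the substitution $u=\log(t)$ of Definition~\ref{defn:Fxfx}. Write $u^*=u_{m-p,q}(x)$. Then $F_x(t)=\exp(f_x(\log t))$ and $dt/t=du$, so the two ratios become
\[\int_{u_m}^{u^*}\exp(f_x(u)-f_x(u^*))\,du \quad\text{and}\quad \int_{u^*}^{\log T}\exp(f_x(u)-f_x(u^*))\,du.\]
By Proposition~\ref{prop:fx}, $f_x'(u^*)=0$, $f_x''<0$, and $f_x''$ is increasing for $u>w_0$.

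For the first integral, Taylor's theorem at $u^*$ gives, for $u\in[u_m,u^*]$,
\[f_x(u)-f_x(u^*)=\tfrac12 f_x''(\xi)(u-u^*)^2\]
with $\xi\in(u,u^*)$. Since $f_x''$ is increasing, $f_x''(\xi)\le f_x''(u^*)$, and so the exponent is at most $-\kappa^2(u-u^*)^2$ with $\kappa=\sqrt{-f_x''(u^*)/2}$. Substituting $s=\kappa(u^*-u)$ bounds the integral by
\[\frac{1}{\kappa}\int_0^{\kappa(u^*-u_m)}e^{-s^2}\,ds=\frac{\sqrt\pi}{2\kappa}\operatorname{erf}(\delta_{\rm low}),\]
and $\frac{\sqrt\pi}{2\kappa}=\frac{\sqrt\pi}{\sqrt{-2f_x''(u^*)}}$, which is exactly the claimed first bound.

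For the second integral, $u\in[u^*,\log T]$ gives $\xi\le\log T$. Hence $f_x''(\xi)\le f_x''(\log T)<0$, so the exponent is at most $-\kappa_T^2(u-u^*)^2$ with $\kappa_T=\sqrt{-f_x''(\log T)/2}$. Extending to a full half-Gaussian gives the bound $\frac{\sqrt\pi}{2\kappa_T}=\frac{\sqrt\pi}{\sqrt{-2f_x''(\log T)}}$. The inequality is strict because the truncated integral over $[u^*,\log T]$ is strictly smaller than the full half-line integral.

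The only real point to check is the direction of the monotonicity of $f_x''$ on each side. To the left of $u^*$ the curvature is most negative at points below $u^*$, and $f_x''(\xi)\le f_x''(u^*)$ still holds since $\xi\le u^*$. To the right we must use the endpoint $\log T$ rather than $u^*$. We also need $u_m\ge w_0$, so that Proposition~\ref{prop:fx} applies on the whole range; this follows from $x>\exp(-m/v'(w_0))$.
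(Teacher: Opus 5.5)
Your proof is correct and is essentially the paper's argument. After the substitution $u=\log(t)$, you apply Taylor's theorem at $u_{m-p,q}(x)$ and use the monotonicity of $f_x''$ to get a Gaussian majorant: with curvature $f_x''(u_{m-p,q}(x))$ on $[u_m(x),u_{m-p,q}(x)]$ and with curvature $f_x''(\log(T(x)))$ on $[u_{m-p,q}(x),\log(T(x))]$; these integrate to the stated $\operatorname{erf}$ and half-Gaussian bounds. Your explicit Lagrange remainder and the check that $u_m(x)>w_0$ make precise what the paper leaves implicit.
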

\begin{proof}
We set $U(x) = \log(T(x))$ and we perform the $u$ substitution $u=\log(t)$ so that
\[ \int_{t_m(x)}^{T(x)} \frac{x^{-v(\log(t))}}{t^{m+1-p}}\log(t)^q \,dt= \int_{u_m(x)}^{U(x)} \exp(f_x(u)) du. \]
We apply Taylor's theorem, and use that $f_x''(u)$ is increasing, to find that for $u \in [u_m(x),u_{m-p,q}]$ we have
\[   f_x(u)  \leq   f_x(u_{m-p,q}(x)) + \frac{f''_x(u_{m-p,q}(x))}{2}(u-u_{m-p,q})^2 .\]
whereas for $u \in [u_{m-p,q},U(x)]$ we have
\[   f_x(u)  \leq   f_x(u_{m-p,q}) + \frac{f''_x(U(x))}{2}(u-u_{m-p,q})^2 .\]
Therefore, we have respectively
\[ \frac{\int_{u_m(x)}^{u_{m-p,q}(x)}\exp(f_x(u))du}{F_x(t_{m-p,q}(x))}  \leq  \int_{u_m(x)}^{u_{m-p,q}(x)}\exp\left(\frac{f''_x(u_{m-p,q})}{2}(u-u_{m-p,q})^2\right) du, \]
and
\[ \frac{\int_{u_{m-p,q}(x)}^{U(x)} \exp(f_x(u))du}{F_x(t_{m-p,q}(x))}  \leq  \int_{u_{m-p,q}(x)}^{U(x)} \exp\left(\frac{f''_x(U(x))}{2}(u-u_{m-p,q})^2\right) du, \]
and performing the substitutions $\delta = \sqrt{\frac{-f''_x(u_{m-p,q})}{2}}(u-u_{m-p,q})$  and $\delta = \sqrt{\frac{-f''_x(U(x))}{2}}(u-u_{m-p,q})$ we obtain the bounds
\[  \sqrt{\frac{2}{-f''_x(u_{m-p,q}(x))}}\int_{-\delta_{\rm low} }^0 \exp(-\delta^2)d\delta = \operatorname{erf}\left(\delta_{\rm low} \right)\frac{\sqrt{\pi}}{\sqrt{-2f''_x(u_{m-p,q}(x))}}.\]
 and
\[  \sqrt{\frac{2}{-f''_x(U(x))}}\int_{0}^\infty \exp(-\delta^2)d\delta = \frac{\sqrt{\pi}}{\sqrt{-2f''_x(U(x))}}\]
which give the results.
\end{proof}
\begin{Rem}\label{rem:mainintegrallower}
A matching lower bound holds provided $T(x) = t_{m-p,q}(x)^{1+\eta(x)}$ where $1 > \eta(x) > \frac{1}{\sqrt{u_{m-p,q}(x)}}$ by applying Taylor's theorem to obtain lower bounds, though we do not need it anywhere. One easily obtains:
 \[  \int_{t_m(x)}^{T(x)} \frac{x^{-v(\log(t))}}{t^{m+1-p}}\log(t)^q\,dt  \asymp  \sqrt{\log( t_{m-p,q}(x))} F_x( t_{m-p,q}(x)). \]
A more careful analysis, see Proposition \ref{prop:Cvalue}, combining Propositions \ref{prop:tailintegralabstract} and \ref{prop:mainintegralabstract} with a carefully chosen splitting point $\eta$ will often allow one to 
obtain a more precise asymptotic. A value $\eta(x)$ of the form
\[ \frac{C\sqrt{2\log(u_{m-p,q})}}{\sqrt{(m-p)u_{m-p,q}}} \]
for some constant $C$ will tend to balance the terms coming from Propositions  \ref{prop:tailintegralabstract}  and \ref{prop:mainintegralabstract} with larger values reducing the weight of the tail.
\end{Rem}

\subsection{Bounds on $I_{v,m,p,q}(x) $ for Specific $v$}
We assume a common shape of the usual zero-free regions we know by setting
\begin{align*}
v(u):=v_{a,b,c}(u)=cu^a\log(u)^b,
\end{align*}
where $-1 \leq a < 0$ and $c>0$. For example in the case of de la Vallée Poussin zero-free region $a=-1$ and $b=0$, and in the case of Littlewood $a=-1$ and $b=1$ and finally in Korobov-Vinogradov zero-free region $a=-2/3$ and $b=-1/3$.

Our strategy in this section proceeds in the following way:
\begin{itemize}
\item In section \ref{subsubsec:uapprox} we introduce the specific form of $u_{n,q}$ and its recursively defined component $\hat{u}_{n,q}$. We study how to approximate $\hat{u}_{n,q}$ in applications, especially in the case of $b<0$.
\item In section \ref{subsubsec:comparison} we introduce a proposition which creates a link between $u_{m-p,q}$ and $u_{m,0}$.
\item In section \ref{subsubsec:explicitprop} we introduce the necessary steps to apply Propositions \ref{prop:tailintegralabstract} and \ref{prop:mainintegralabstract} explicitly.
\item In section \ref{subsubsec:Ebounds} we show how to give computable upper bounds for $E_{i1}(x)$, $E_{i2}(x)$ and $E_{i3}(x)$ in \eqref{eq:edge}.
\item In section \ref{subsubsec:Eboundsparameters} we will finally insert all the parameters of the explicit zero-density bound and Korobov-Vinogradov zero-free region and compute some upper bounds for $E_{i1}(x)$, $E_{i2}(x)$ and $E_{i3}(x)$ and prove necessary decreasing properties by applying the results we have derived in the previous subsections, especially the bounding theory of $u_{n,q}$.
\end{itemize}
\subsubsection{Computing $u_{n,q}$}\label{subsubsec:uapprox}

We assume that $n>0$ and that $x$ is large enough so that by Proposition \ref{prop:fx} a unique critical point exists, in practice this means $x$ large enough so that we are past the zeros and poles of the expressions in \eqref{def:hatu}. 
In order to apply the results of Section \ref{subsec:nearoneabstract} in this setting we shall need to compute the critical value $u_{n,q}(x)$ which shall have the defining equation 
\begin{align*}
v'(u_{n,q}(x)) = \frac{-n}{\log(x)}\left( 1 - \frac{q}{nu_{n,q}(x)} \right)
\end{align*}
As we know the shape of $v(u)$ we can differentiate it:
\begin{align*}
v'(u) = c u^{a-1}\log(u)^{b}\left(a+\frac{b}{\log(u)}\right).
\end{align*}
Hence,
\begin{align*}
c u_{n,q}(x)^{a-1}\log(u_{n,q}(x))^{b}\left(a+\frac{b}{\log(u_{n,q}(x))}\right) = \frac{-n}{\log(x)}\left( 1 - \frac{q}{nu_{n,q}(x)} \right).
\end{align*}
There is no closed form expression for $u_{n,q}(x)$ if $b \not= 0$, but we can approximate it recursively in the following way. First we find a recursive equation for $u_{n,q}(x)$:
\begin{align}
u_{n,q}(x) = \left( \frac{-n}{c\log(x)}\log(u_{n,q}(x))^{-b}\left(a+\frac{b}{\log(u_{n,q}(x))}\right)^{-1}\left( 1 - \frac{q}{nu_{n,q}(x)} \right)\right)^{-1/(1-a)}. \label{def:u}
\end{align}
Now set
\begin{align}
\tilde{u}_{n,q}(x) := \left(\frac{(1-a)^{-b}(-a)c}{n} \right)^{1/(1-a)}\log(x)^{1/(1-a)}\log\log(x)^{b/(1-a)} \label{def:tildeu}
\end{align}
and
\begin{align}
\begin{split}
\hat{u}_{n,q}(x)&:= \left( 1+ \frac{\log(n^{-1}(1-a)^{-b}(-a)c)}{\log\log(x)} + b\frac{\log\log\log(x)}{\log\log(x)}-\frac{\log(\hat{u}_{n,q}(x))}{\log\log(x)}\right)^{-b} \\ &\times\left( 1+\frac{b}{a\log(\tilde{u}_{n,q}(x)\hat{u}_{n,q}(x)^{-1/(1-a)})} \right)^{-1} \\ &\times\left( 1 - \frac{q}{n\tilde{u}_{n,q}(x)\hat{u}_{n,q}(x)^{-1/(1-a)}} \right), \label{def:hatu}
\end{split}
\end{align}
which allows us to conclude that
\begin{align}
u_{n,q}(x) = \tilde{u}_{n,q}(x)\hat{u}_{n,q}(x)^{-1/(1-a)}. \label{def:u2}
\end{align}
\begin{prop}\label{prop:limit1}
We have that $\lim_{x \to \infty} \hat{u}_{n,q}(x) = 1$.
\end{prop}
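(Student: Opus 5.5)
We prove the limit by working directly from the exact critical-point equation and converting the recursion \eqref{def:hatu} into an explicit statement about $u_{n,q}(x)$. By \eqref{def:u2} the claim is equivalent to
\[ u_{n,q}(x)^{1-a}\sim \tilde u_{n,q}(x)^{1-a}, \]
that is, to
\[ \frac{u_{n,q}(x)^{1-a}}{\log(x)\log\log(x)^{b}}\;\longrightarrow\; \frac{(1-a)^{-b}(-a)c}{n}. \]
The plan is to get this by first pinning down $\log(u_{n,q}(x))$ asymptotically and then feeding that back into the defining equation.

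\emph{Step 1: $u_{n,q}(x)\to\infty$.} This was already established in the proof of Proposition~\ref{prop:fx}: since $u_{n,q}\geq u_m$ and $t_m(x)\to\infty$, we have $u_{n,q}(x)\to\infty$ as $x\to\infty$.

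\emph{Step 2: drop the lower-order factors.} Rewrite the defining equation as
\[ c\,u_{n,q}^{a-1}\log(u_{n,q})^{b}\left(-a-\frac{b}{\log u_{n,q}}\right) = \frac{n}{\log(x)}\left(1-\frac{q}{nu_{n,q}}\right). \]
By Step 1, both $\left(1-\frac{q}{nu_{n,q}}\right)\to 1$ and $\left(1+\frac{b}{a\log u_{n,q}}\right)\to 1$. Hence
\[ u_{n,q}^{1-a}\log(u_{n,q})^{-b} = \frac{(-a)c}{n}\log(x)\,(1+o(1)). \]

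\emph{Step 3: the logarithm of $u_{n,q}$.} Take logarithms in Step 2:
\[ (1-a)\log u_{n,q} - b\log\log u_{n,q} = \log\log x + O(1). \]
Since $\log\log u_{n,q}=o(\log u_{n,q})$, this gives $\log u_{n,q}\sim \frac{\log\log x}{1-a}$. Consequently
\[ \log(u_{n,q})^{-b} = (1-a)^{b}\log\log(x)^{-b}(1+o(1)). \]
This is the one genuinely "bootstrapping" step. I expect it to be the main obstacle, because $b$ may be negative (the Korobov--Vinogradov case has $b=-1/3$). It is harmless only because $(1+o(1))^{-b}\to 1$ for any fixed real $b$, so we need the ratio $\log u_{n,q}\big/\tfrac{\log\log x}{1-a}\to 1$ and not merely a bound.

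\emph{Step 4: conclude.} Substitute Step 3 into Step 2 to get
\[ u_{n,q}^{1-a}= \frac{(-a)c}{n}(1-a)^{-b}\log(x)\log\log(x)^{b}(1+o(1)) = \tilde u_{n,q}(x)^{1-a}(1+o(1)), \]
and therefore $\hat u_{n,q}(x)=\left(\tilde u_{n,q}/u_{n,q}\right)^{1-a}\to 1$.

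As a consistency check against the form of \eqref{def:hatu}, the middle and last factors there tend to $1$ by Step 1. The first factor tends to $1$ as well, since every correction inside it, namely $\log(\text{const})/\log\log x$, $b\log\log\log x/\log\log x$, and $\log\hat u_{n,q}/\log\log x$, is $o(1)$. For the last of these we use that $\hat u_{n,q}$ is bounded, which follows from Step 4 (or directly from Step 3).
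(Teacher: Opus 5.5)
Your proof is correct and follows essentially the paper's route: the paper also first derives $\log u_{n,q}(x)\sim\frac{1}{1-a}\log\log(x)$ from \eqref{def:u} (your Steps 1--3), then deduces $\log(\hat{u}_{n,q}(x))/\log\log(x)\to 0$ and reads off $\hat{u}_{n,q}(x)\to 1$ from \eqref{def:hatu}, which is your Step 4 rewritten in terms of $\hat{u}_{n,q}$ (the route you sketched as a consistency check). One small imprecision in that check: the first factor of \eqref{def:hatu} needs $\log(\hat{u}_{n,q}(x))=o(\log\log(x))$, which Step 3 gives directly, rather than mere boundedness of $\hat{u}_{n,q}$, which would not suffice because $\hat{u}_{n,q}$ must also stay away from $0$.
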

\begin{proof}
From \eqref{def:u} we see that $\log(u_{n,q}(x)) \sim \frac{1}{1-a}\log\log(x)$ and we also note from \eqref{def:tildeu} that $\log(\tilde{u}_{n,q}(x))\sim \frac{1}{1-a}\log\log(x)$. Hence, by \eqref{def:u2} we have
\begin{align*}
\lim_{x \to \infty} \frac{\log(\hat{u}_{n,q}(x))}{\log\log(x)} = 0,
\end{align*}
which implies in \eqref{def:hatu} that $\lim_{x \to \infty} \hat{u}_{n,q}(x) = 1$.
\end{proof}
To approximate $\hat{u}_{n,q}(x)$ we define
\begin{align*}
\mathcal{H}(x,u)&:= \left( 1+ \frac{\log(n^{-1}(1-a)^{-b}(-a)c)}{\log\log(x)} + b\frac{\log\log\log(x)}{\log\log(x)}-\frac{\log(u)}{\log\log(x)}\right)^{-b} \\ &\times\left( 1+\frac{b}{a\log(\tilde{u}_{n,q}(x)u^{-1/(1-a)})} \right)^{-1} \\ &\times\left( 1 - \frac{q}{n\tilde{u}_{n,q}(x)u^{-1/(1-a)}} \right).
\end{align*}
In order to analyze $\mathcal{H}(x,u)$ we also denote its factors in the following way:
\begin{align*}
\mathcal{H}_1(x,u) &:=\left( 1+ \frac{\log(n^{-1}(1-a)^{-b}(-a)c)}{\log\log(x)} + b\frac{\log\log\log(x)}{\log\log(x)}-\frac{\log(u)}{\log\log(x)}\right)^{-b} \\
\mathcal{H}_2(x,u) &:= \left( 1+\frac{b}{a\log(\tilde{u}_{n,q}(x)u^{-1/(1-a)})} \right)^{-1} =  1 - \frac{b}{b+a\log(\tilde{u}_{n,q}(x)u^{-1/(1-a)})} \\
\mathcal{H}_3(x,u) &:= 1 - \frac{q}{n\tilde{u}_{n,q}(x)u^{-1/(1-a)}}.
\end{align*}
\begin{Rem}\label{Rem:H}
There exists a positive constant $x_0$ so that the function $\mathcal{H}(x,u)$ is positive and it has no poles whenever $x > x_0$ and $u > \min\{u_0,\log\log(x)^{-|b|}\}$ where $0 < u_0 < u\leq\log\log(x)^{|b|}$ is small. Furthermore, the function $\mathcal{H}(x,u)$ is continuous in this domain. We also note that for any positive $x_1>x_0$ we have that $\mathcal{H}(x_1,u) = u$ holds if and only if $u = \hat{u}_{n,q}(x_1)$.
\end{Rem}
\begin{prop}\label{prop:iteration2}
Let $u_0$ be some small positive constant, and let $w_{U}(x)$ and $w_{L}(x)$ be defined in such a way that $ u_0 < w_{L}(x) \leq \hat{u}_{n,q}(x) \leq w_{U}(x) \leq \log\log(x)^{|b|}$ holds. Then we have eventually that
\begin{enumerate}
[label=(\roman*)]
\item if $b>0$, then
\begin{align*}
 \mathcal{H}(x,w_{L}(x)) \leq \hat{u}_{n,q}(x) \leq \mathcal{H}(x,w_{U}(x)); \textrm{ and}
\end{align*}
\item if $b<0$, then
\begin{align*}
 \mathcal{H}(x,w_{U}(x)) \leq \hat{u}_{n,q}(x) \leq \mathcal{H}(x,w_{L}(x)). 
\end{align*}
\end{enumerate}
with $ |\mathcal{H}(x,w_{U}(x))-\hat{u}_{n,q}(x)| \leq |w_U(x)-\hat{u}_{n,q}(x)|$ and $ |\mathcal{H}(x,w_{L}(x))-\hat{u}_{n,q}(x)| \leq |w_L(x)-\hat{u}_{n,q}(x)|$.
\end{prop}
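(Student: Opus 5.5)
The plan is to show that, for $x$ large, $u\mapsto\mathcal{H}(x,u)$ is monotone on the interval $[w_L(x),w_U(x)]$ and is a contraction there. Its direction of monotonicity is the sign of $b$. Given this, evaluating at the endpoints and using the fixed-point identity $\mathcal{H}(x,\hat{u}_{n,q}(x))=\hat{u}_{n,q}(x)$ from Remark \ref{Rem:H} gives both claims. If $\mathcal{H}(x,\cdot)$ is increasing, then $w_L\le\hat u\le w_U$ implies $\mathcal{H}(x,w_L)\le\mathcal{H}(x,\hat u)=\hat u\le\mathcal{H}(x,w_U)$. If it is decreasing, the inequalities reverse. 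The distance statements follow from the mean value theorem once $|\partial_u\mathcal{H}(x,u)|\le 1$ on the interval: $|\mathcal{H}(x,w)-\hat u|=|\mathcal{H}(x,w)-\mathcal{H}(x,\hat u)|\le |w-\hat u|$.

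To get monotonicity I will compute $\partial_u\log\mathcal{H}$ factor by factor. First,
\[ \partial_u\log\mathcal{H}_1 = \frac{b}{u\log\log(x)}\Big(1+\tfrac{\log(n^{-1}(1-a)^{-b}(-a)c)}{\log\log x}+b\tfrac{\log\log\log x}{\log\log x}-\tfrac{\log u}{\log\log x}\Big)^{-1}. \]
Since $u$ ranges between $u_0$ and $\log\log(x)^{|b|}$, the bracket is $1+O(\log\log\log x/\log\log x)>0$ eventually, so this term has the sign of $b$. Its size is $\asymp |b|/(u\log\log x)$.

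For $\mathcal{H}_2$, write $L=\log(\tilde u_{n,q}(x))-\frac{\log u}{1-a}$, which is $\asymp\log\log x$. Then $\mathcal{H}_2=aL/(aL+b)$, and
\[ \partial_u\log\mathcal{H}_2=\frac{-b}{(1-a)\,u\,L\,(aL+b)}. \]
This is $O(|b|/(u(\log\log x)^2))$, a lower-order term. For $\mathcal{H}_3$ we have $\partial_u\log\mathcal{H}_3 = O(q\,u^{1/(1-a)-1}/\tilde u_{n,q}(x))$. Because $\tilde u_{n,q}$ grows like a positive power of $\log x$ while $u\le\log\log(x)^{|b|}$, this term is negligible against $1/(u\log\log x)$. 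Hence, eventually, $\partial_u\log\mathcal{H}$ has the sign of $b$, with $|\partial_u\log\mathcal{H}|\le C/(u\log\log x)$. When $b=0$ only the negligible $\mathcal{H}_3$ term survives; the statement does not treat that case.

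For the contraction bound, $\partial_u\mathcal{H}=\mathcal{H}\cdot\partial_u\log\mathcal{H}$, so $|\partial_u\mathcal{H}|\le C\,\mathcal{H}(x,u)/(u\log\log x)$. On the interval, $\mathcal{H}(x,u)\le C'\log\log(x)^{|b|}\cdot(\text{bounded factors})$ and $u\ge u_0$. This gives $|\partial_u\mathcal{H}|\ll \log\log(x)^{|b|-1}/u_0$, which tends to $0$ because $|b|<1$ in the shapes considered (e.g. $b=-1/3$). So $|\partial_u \mathcal{H}|<1$ eventually, and this is what "eventually" refers to.

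The main obstacle is making the uniformity in $u$ honest: all the $O(\cdot)$ terms above must hold uniformly for $u\in[u_0,\log\log(x)^{|b|}]$. In particular the bracket in $\mathcal{H}_1$ and the quantity $aL+b$ must stay bounded away from $0$, which is where the exponent restriction $\log\log(x)^{|b|}$ on the upper endpoint is used. I would handle this by bounding $|\log u|\le |b|\log\log\log x+|\log u_0|$ and absorbing it into the $\log\log\log x/\log\log x$ error.
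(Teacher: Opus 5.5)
Your proposal is correct and takes essentially the paper's route. Both arguments show that $u\mapsto\mathcal{H}(x,u)$ is monotone in the direction of the sign of $b$, and both prove a Lipschitz bound of size $O(1/\log\log x)$ in the variable $\log u$. The paper gets that bound from the factor-by-factor ratios $\mathcal{H}_i(x,D\hat{u}_{n,q})/\mathcal{H}_i(x,\hat{u}_{n,q})$, leading to \eqref{closing-in}, while you get it from a derivative bound and the mean value theorem.

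One small correction: $\mathcal{H}_1(x,u)=\big((1-a)L/\log\log x\big)^{-b}=1+o(1)$ uniformly on $[u_0,\log\log(x)^{|b|}]$, so $\mathcal{H}$ is bounded there and $|\partial_u\mathcal{H}|\ll 1/(u_0\log\log x)$. Your restriction to $|b|<1$ is therefore unnecessary, and as written it would exclude the Littlewood case $b=1$.
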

\begin{proof}
We begin by noting that
\begin{align*}
\mathcal{H}(x,u) = (1-a)^{-b}\frac{\log\log(x)^{b}\log(\tilde{u}_{n,q}(x)u^{-1/(1-a)})^{1-b}}{\log(\tilde{u}_{n,q}(x)u^{-1/(1-a)})+\frac{b}{a}}\bigg(1-\frac{q}{n}\exp\bigg(-\log(\tilde{u}_{n,q}(x)u^{-1/(1-a)}) \bigg) \bigg)
\end{align*}
Now $\log(\tilde{u}_{n,q}(x)u^{-1/(1-a)})$ is decreasing in $u$, which implies that
\begin{align*}
\log(\tilde{u}_{n,q}(x)w_{U}(x)^{-1/(1-a)}) \leq \log(\tilde{u}_{n,q}(x)\hat{u}_{n,q}(x)^{-1/(1-a)}) \leq \log(\tilde{u}_{n,q}(x)w_{L}(x)^{-1/(1-a)}).
\end{align*}
Now if $b>0$, then $1-b < 1$ which implies that eventually for large enough $x$ we must have that $\mathcal{H}(x,u)$ is decreasing as $\log(\tilde{u}_{n,q}(x)u^{-1/(1-a)})$ increases, which implies the case (i) and similarly if $b<0$ then $1-b > 1$ and $\mathcal{H}(x,u)$ will be increasing as $\log(\tilde{u}_{n,q}(x)u^{-1/(1-a)})$ increases, which implies the case (ii).

Next, by setting $w(x)=D(x)\hat{u}_{n,q}(x)$ where $\frac{1}{2}\log\log(x)^{-|b|}<D(x)<1$ if $w(x) = w_L(x)$ and $1<D(x)<2\log\log(x)^{|b|}$ if $w(x)=w_U(x)$ we note that
\begin{align*}
\frac{\mathcal{H}_1(x,w(x))}{\mathcal{H}_1(x,\hat{u}_{n,q}(x))} &= \left(1 - \frac{\log(D(x))}{(1-a)\log(u_{n,q}(x))}\right)^{-b}, \\
\frac{\mathcal{H}_2(x,w(x))}{\mathcal{H}_2(x,\hat{u}_{n,q}(x))} &= 1 - \frac{b(1-a)/a}{(1-a)\log(u_{n,q}(x))((1-a)\log(u_{n,q}(x))+b(1-a)/a-\log(D(x)))} \log(D(x)), \\
\frac{\mathcal{H}_3(x,w(x))}{\mathcal{H}_3(x,\hat{u}_{n,q}(x))} &= 1- \frac{D(x)^{1/(1-a)}-1}{\frac{n}{q}u_{n,q}(x)-1},
\end{align*}
which will eventually imply that there exists a positive constant $K$ such that
\begin{align}
\left| \frac{\mathcal{H}(x,w(x))}{\hat{u}_{n,q}(x)}  -1\right| \leq K\frac{\left|\log(D(x))\right|}{\log\log(x)} \label{closing-in}
\end{align}
holds and hence $ |\mathcal{H}(x,w_{U}(x))-\hat{u}_{n,q}(x)| \leq |w_U(x)-\hat{u}_{n,q}(x)|$ and $ |\mathcal{H}(x,w_{L}(x))-\hat{u}_{n,q}(x)| \leq |w_L(x)-\hat{u}_{n,q}(x)|$.
\end{proof}
\begin{prop}\label{Prop:iteration} Assume $b\not=0$. Let $f(x)$ be some function where for some small positive constant $u_0$ we have $u_0 < f(x) \leq \log\log(x)^{|b|}$. We denote $\mathcal{H}^k(x,f(x))$ as $k$-iterated $f(x)$ where as an example $\mathcal{H}^{1}(x,f(x))=\mathcal{H}(x,f(x))$, $\mathcal{H}^{2}(x,f(x))=\mathcal{H}(x,\mathcal{H}(x,f(x)))$, $\mathcal{H}^{3}(x,f(x))=\mathcal{H}(x,\mathcal{H}(x,\mathcal{H}(x,f(x))))$ and so on. Then there exists an $x'$ so that
\begin{align*}
\lim_{k \to \infty} \mathcal{H}^{k}(x,f(x)) = \hat{u}_{n,q}(x)
\end{align*}
for every $x \geq x'$. 
\end{prop}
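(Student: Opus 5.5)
The plan is to show that, for $x$ large, the map $u\mapsto\mathcal{H}(x,u)$ is a contraction with constant well below $1$ on an interval $J_x$ containing both $f(x)$ and $\hat{u}_{n,q}(x)$. Then the Banach fixed point theorem gives $\mathcal{H}^k(x,f(x))\to\hat{u}_{n,q}(x)$. By Remark \ref{Rem:H}, $\hat{u}_{n,q}(x)$ is the unique fixed point of $\mathcal{H}(x,\cdot)$ in that domain.

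The natural interval is $J_x=[u_0,\log\log(x)^{|b|}]$. Since $\hat{u}_{n,q}(x)\to 1$ by Proposition \ref{prop:limit1}, it lies in $J_x$ for $x$ large.

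\textbf{Step 1: $\mathcal{H}(x,\cdot)$ maps $J_x$ into itself.} Write $L=\log(\tilde{u}_{n,q}(x)u^{-1/(1-a)})$. Since $\log\tilde u_{n,q}(x)\sim\frac{1}{1-a}\log\log x$ and $|\log u|\le |b|\log\log\log x+O(1)$ on $J_x$, we get $L=\frac{1}{1-a}\log\log x\,(1+o(1))$ uniformly in $u\in J_x$. The closed form for $\mathcal{H}$ from the proof of Proposition \ref{prop:iteration2} then gives the following uniformly on $J_x$:
\begin{itemize}
\item $\mathcal{H}_1=1+o(1)$, because every correction term inside the bracket is $O(\log\log\log x/\log\log x)$;
\item $\mathcal{H}_2=1+O(1/\log\log x)$;
\item $\mathcal{H}_3=1+O(e^{-L})$.
\end{itemize}
Hence $\mathcal{H}(x,J_x)\subset[1/2,2]\subset J_x$ once $x\ge x'$, using that $u_0<1/2$.

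\textbf{Step 2: the contraction estimate.} I would differentiate logarithmically in $u$, using $\partial_u L=-\frac{1}{(1-a)u}$:
\[
\partial_u\log\mathcal{H}_1=\frac{b}{u\,(\cdots)\log\log x},\qquad
\partial_u\log\mathcal{H}_2=O\!\left(\frac{1}{uL^2}\right),\qquad
\partial_u\log\mathcal{H}_3=O\!\left(\frac{e^{-L}}{u}\right),
\]
where the bracket $(\cdots)$ in the first expression is $1+o(1)$. Therefore
\[
|\partial_u\mathcal{H}(x,u)|\le \mathcal{H}(x,u)\,\frac{K}{u\log\log x}\le \frac{2K}{u_0\log\log x}
\]
on $J_x$, for some constant $K$ depending on $a,b,c,n,q$. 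This is the derivative analogue of \eqref{closing-in}. Choose $x'$ so that this bound is at most $1/2$. The mean value theorem then gives $|\mathcal{H}(x,u)-\mathcal{H}(x,u')|\le\frac12|u-u'|$ on $J_x$. After one iteration all iterates lie in $[1/2,2]$, where the factor $1/u$ is harmless.

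\textbf{Step 3: conclusion and the main obstacle.} Iterating gives
\[
|\mathcal{H}^k(x,f(x))-\hat{u}_{n,q}(x)|\le 2^{-k}\,|f(x)-\hat{u}_{n,q}(x)|\to 0
\]
for every $x\ge x'$. The main obstacle is Step 2 near the lower end $u\approx u_0$. There the $1/u$ factor is large, and the first iterate must be handled separately: use only Step 1 to land in $[1/2,2]$, and only then invoke the contraction. I would also make sure the $o(1)$ terms are uniform on $J_x$ rather than merely pointwise. This uniformity is exactly what pins down $x'$ independently of the particular $f$, and it requires $u_0$ to be fixed while the interval's upper end grows only like $\log\log(x)^{|b|}$.
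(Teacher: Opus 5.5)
Your proposal is correct and is essentially the paper's argument: the paper iterates the ratio estimate \eqref{closing-in} on $D_k(x)=\mathcal{H}^k(x,f(x))/\hat{u}_{n,q}(x)$ to get $|D_{k+1}(x)-1|\le K|\log D_k(x)|/\log\log(x)$, which is the same contraction toward the fixed point that your derivative bound gives (in the variable $\log u$, with factor $O(1/\log\log(x))$). Your version is more carefully justified than the paper's sketch: you make the uniform Lipschitz constant on $J_x$ explicit and then invoke the Banach fixed point theorem, whereas the paper only asserts that the uniformity of $K$ holds ``eventually''.
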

\begin{proof}
The proof is mainly obtained from Proposition \ref{prop:iteration2}. By looking at the proof and \eqref{closing-in} we see that by taking $w_k(x) = D_k(x)\hat{u}_{n,q}(x)$ we have
\begin{align*}
|D_{k+1}(x)-1| = \left| \frac{\mathcal{H}(x,D_{k}(x)\hat{u}_{n,q}(x))}{\hat{u}_{n,q}(x)}  -1\right| \leq K\frac{|\log(D_{k}(x))|}{\log\log(x)},
\end{align*}
which by initial conditions implies that $\lim_{k\to\infty} D_k(x) = 1$.
\end{proof}
\begin{prop}\label{prop:Bounds}
Let $x_0$ be the same as in Remark \ref{Rem:H} and assume $\tilde{u}_{n,q}(x) > \max\{e,q/n\}$. If $b <0$, then \begin{align*} \mathcal{H}(x,1) < \hat{u}_{n,q}(x) < 1 \end{align*} holds whenever
\begin{align*}
\log\log(x) > -\log(n^{-1}(1-a)^{-b}(-a)c)-b\log\log\log(x)
\end{align*}
and
\begin{align*}
\log(n^{-1}(1-a)^{-b}(-a)c)+b\log\log\log(x) < 0
\end{align*}
hold.
\end{prop}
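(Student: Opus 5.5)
Write $L:=\log(n^{-1}(1-a)^{-b}(-a)c)+b\log\log\log(x)$, so the two hypotheses read $\log\log(x)+L>0$ and $L<0$. The plan is to treat $\hat{u}_{n,q}(x)$ as the unique fixed point of $u\mapsto\mathcal{H}(x,u)$ (Remark~\ref{Rem:H}) and to compare $\mathcal{H}(x,u)$ with $u$ for $u\le 1$ and $u\ge 1$, using that $\mathcal{H}(x,\cdot)$ is monotone in $u$ when $b<0$. The first step is to check each factor at $u=1$. We have $\mathcal{H}_1(x,1)=(1+L/\log\log(x))^{-b}$; the base lies in $(0,1)$ by the two hypotheses, and $-b>0$, so $0<\mathcal{H}_1(x,1)<1$. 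Next, $\mathcal{H}_3(x,1)=1-q/(n\tilde{u}_{n,q}(x))\in(0,1]$ because $\tilde{u}_{n,q}(x)>q/n$. For $\mathcal{H}_2(x,1)=\bigl(1+\tfrac{b}{a\log\tilde{u}_{n,q}(x)}\bigr)^{-1}$, note that $a<0$ and $b<0$ give $b/a>0$, and $\log\tilde{u}_{n,q}(x)>1>0$ since $\tilde{u}_{n,q}(x)>e$; hence $0<\mathcal{H}_2(x,1)<1$. Together these give $0<\mathcal{H}(x,1)<1$.

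The second step is monotonicity in $u$. As $u$ increases, $\mathcal{H}_1$ decreases, since its base decreases and the exponent $-b$ is positive. The quantity $\ell(u):=\log(\tilde{u}_{n,q}(x)u^{-1/(1-a)})$ also decreases, which makes the bracket $1+b/(a\ell)$ increase, so $\mathcal{H}_2$ decreases. Likewise $\tilde{u}_{n,q}u^{-1/(1-a)}$ decreases, so $\mathcal{H}_3$ decreases. On the domain of Remark~\ref{Rem:H} all three factors are positive, so $\mathcal{H}(x,\cdot)$ is strictly decreasing. This agrees with the proof of Proposition~\ref{prop:iteration2}(ii), and here it needs no ``eventually'', because it is checked factor by factor.

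The third step places $\hat{u}:=\hat{u}_{n,q}(x)$ relative to $1$. Consider $g(u)=\mathcal{H}(x,u)-u$, which is continuous and strictly decreasing. At $u=1$, $g(1)=\mathcal{H}(x,1)-1<0$, so the unique zero $\hat{u}$ satisfies $\hat{u}<1$. Since $\mathcal{H}(x,\cdot)$ is decreasing and $\hat{u}<1$, it follows that $\hat{u}=\mathcal{H}(x,\hat{u})>\mathcal{H}(x,1)$. This is the claimed chain $\mathcal{H}(x,1)<\hat{u}_{n,q}(x)<1$.

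The main obstacle is making sure that $u\in[\hat{u},1]$ stays inside the region where the factors are positive and pole-free. Concretely, $\ell(u)$ must stay positive, which follows from $\ell(u)\ge\ell(1)=\log\tilde{u}_{n,q}(x)>1$ when $u\le 1$. The base of $\mathcal{H}_1$ must also stay positive, and $1+L/\log\log(x)-\log(u)/\log\log(x)\ge 1+L/\log\log(x)>0$ for $u\le1$. So on $(0,1]$ everything is well defined, and that is all the argument uses, because $\hat{u}$ already lies in the range of Remark~\ref{Rem:H} with $x>x_0$.
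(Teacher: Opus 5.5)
Your proposal is correct and follows essentially the paper's route. Both arguments combine three ingredients: positivity of $\mathcal{H}(x,1)$ from the two hypotheses and $\tilde{u}_{n,q}(x)>\max\{e,q/n\}$, factor-by-factor monotonicity of $\mathcal{H}(x,\cdot)$ in $u$, and the fixed-point identity $\mathcal{H}(x,\hat{u}_{n,q}(x))=\hat{u}_{n,q}(x)$. The one minor difference is how $\hat{u}_{n,q}(x)\geq 1$ is ruled out. The paper checks directly that each factor of $\mathcal{H}(x,\hat{u}_{n,q}(x))$ would then be below $1$, since your $L$ satisfies $L-\log\hat{u}_{n,q}(x)<0$. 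You instead use $\mathcal{H}(x,1)<1$ and the decrease of $\mathcal{H}(x,u)-u$. In the hypothetical case $\hat{u}_{n,q}(x)\geq 1$, that step needs positivity of the factors on $[1,\hat{u}_{n,q}(x)]$ from Remark~\ref{Rem:H}, not only on $(0,1]$ as your last paragraph suggests. The paper's version relies on Remark~\ref{Rem:H} in the same way.
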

\begin{proof}
We note that the factors $\mathcal{H}_2(x,\hat{u}_{n,q}(x))$ and $\mathcal{H}_3(x,\hat{u}_{n,q}(x))$ are both always less than $1$. Also $\mathcal{H}_1(x,\hat{u}_{n,q}(x))$ is less than $1$ if
\begin{align*}
\log(n^{-1}(1-a)^{-b}(-a)c)+b\log\log\log(x)-\log(\hat{u}_{n,q}(x)) < 0,
\end{align*}
which always holds if $\hat{u}_{n,q}(x) \geq 1$ and $\log(n^{-1}(1-a)^{-b}(-a)c)+b\log\log\log(x) < 0$. Hence, $\hat{u}_{n,q}(x) < 1$ must hold. We then compute
\begin{align*}
\mathcal{H}(x,1)&:= \left( 1+ \frac{\log(n^{-1}(1-a)^{-b}(-a)c)}{\log\log(x)} + b\frac{\log\log\log(x)}{\log\log(x)}\right)^{-b} \\ &\times\left( 1-\frac{b}{b+a\log(\tilde{u}_{n,q}(x))} \right) \\ &\times\left( 1 - \frac{q}{n\tilde{u}_{n,q}(x)} \right). 
\end{align*}
and note that if $0 < u <1$, then $\mathcal{H}_1(x,1) < \mathcal{H}_1(x,u)$, $\mathcal{H}_2(x,1) < \mathcal{H}_2(x,u)$ and $\mathcal{H}_3(x,1) < \mathcal{H}_3(x,u)$ and since $0 < \hat{u}_{n,q}(x) < 1$, we may conclude that $\mathcal{H}(x,1) < \hat{u}_{n,q}(x)$. We then have to find values for $x$ so that $\mathcal{H}(x,1)$ is positive when $0<u<1$. We note that $\mathcal{H}_1(x,1) > 0$ if $\log(n^{-1}(1-a)^{-b}(-a)c)+b\log\log\log(x) > -\log\log(x)$, $\mathcal{H}_2(x,1)>0$ if $\tilde{u}_{n,q}(x) > e$ and $\mathcal{H}_3(x,1) >0$ if $\tilde{u}_{n,q}(x)>q/n$.
\end{proof}

\subsubsection{Comparison between $u_{m-p(x),q}$ and $u_{m,0}$}\label{subsubsec:comparison} In the following Proposition we will derive a useful relation between $u_{m-p(x),q}$ and $u_{m,0}$.
\begin{prop}\label{comparisonProp}
We have that 
\[ u_{m-p(x) ,q}(x) = \left(\frac{1}{\hat{u}_{m-p(x),q}(x) (1-p(x)/m)}\right)^{1/(1-a)} \tilde{u}_{m,0}(x) \]
and
\[ \log( u_{m-p(x) ,q}(x)  ) = \log(  \tilde{u}_{m,0}(x) ) -  \frac{1}{1-a}\log(\hat{u}_{m-p(x),q}(x) (1-p(x)/m)) .\]
\end{prop}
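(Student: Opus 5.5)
This is a direct consequence of the definitions \eqref{def:tildeu} and \eqref{def:u2}. The plan is to read off how $\tilde{u}_{n,q}(x)$ depends on $n$, substitute $n=m-p(x)$ into \eqref{def:u2}, and rewrite the result in terms of $\tilde{u}_{m,0}(x)$.

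First, from \eqref{def:tildeu}, $\tilde{u}_{n,q}(x)$ does not depend on $q$. It depends on $n$ only through the factor $n^{-1/(1-a)}$:
\[ \tilde{u}_{n,q}(x) = n^{-1/(1-a)}\left((1-a)^{-b}(-a)c\right)^{1/(1-a)}\log(x)^{1/(1-a)}\log\log(x)^{b/(1-a)}. \]
Taking $n=m-p(x)$ and comparing with the case $n=m$, $q=0$ gives
\[ \tilde{u}_{m-p(x),q}(x) = \left(\frac{m}{m-p(x)}\right)^{1/(1-a)}\tilde{u}_{m,0}(x) = \left(\frac{1}{1-p(x)/m}\right)^{1/(1-a)}\tilde{u}_{m,0}(x). \]

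Next, substitute this into \eqref{def:u2} with $n=m-p(x)$, namely $u_{m-p(x),q}(x)=\tilde{u}_{m-p(x),q}(x)\,\hat{u}_{m-p(x),q}(x)^{-1/(1-a)}$. Combining the two factors raised to the power $-1/(1-a)$ yields
\[ u_{m-p(x),q}(x) = \left(\frac{1}{\hat{u}_{m-p(x),q}(x)(1-p(x)/m)}\right)^{1/(1-a)}\tilde{u}_{m,0}(x), \]
which is the first formula. Taking logarithms then gives the second formula.

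The only point that needs care is legitimacy rather than computation. We must have $m-p(x)>0$, which holds because $p<m$ is assumed throughout. We must also have $\hat{u}_{m-p(x),q}(x)>0$, so that the real power and the logarithm make sense. For positivity of $\hat{u}$, we rely on the standing assumption that $x$ is large enough for the critical point to exist and for \eqref{def:hatu} to be pole-free and positive (Remark~\ref{Rem:H}).

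One subtlety is that $p$ is allowed to depend on $x$. This causes no difficulty, because the identity \eqref{def:u2} holds pointwise for each fixed $n>0$, and here it is simply applied with $n=m-p(x)$ at each $x$.
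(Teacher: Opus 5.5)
Your proof is correct and follows essentially the same route as the paper. The paper likewise reads off $\tilde{u}_{m,0}(x)=(1-p(x)/m)^{1/(1-a)}\tilde{u}_{m-p(x),q}(x)$ from \eqref{def:tildeu}, substitutes into \eqref{def:u2} with $n=m-p(x)$, and takes logarithms.
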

\begin{proof}
This is essentially by definition,
\[ u_{m ,0}(x) = \left(\frac{1}{\hat{u}_{m,0}(x)}\right)^{1/(1-a)} \tilde{u}_{m,0}(x) \]
and
\[ u_{m-p(x) ,q}(x) = \left(\frac{1}{\hat{u}_{m-p(x) ,q}(x)}\right)^{1/(1-a)} \tilde{u}_{m-p(x) ,q}(x) \]
using that
\[  \tilde{u}_{m,0}(x) =  \left(1-p(x)/m\right)^{1/(1-a)}\tilde{u}_{m-p(x) ,q}(x). \]
\end{proof}

\subsubsection{Explicit Version of Propositions \ref{prop:tailintegralabstract} and \ref{prop:mainintegralabstract}}\label{subsubsec:explicitprop}

We now perform an explicit study of the terms appearing in Proposition \ref{prop:tailintegralabstract} and \ref{prop:mainintegralabstract}.%

First, by direct computation we have.
\begin{align*}
v'(u) &= c u^{a-1}\log(u)^{b}\left(a+\frac{b}{\log(u)}\right).\\
v''(u) &= c u^{a-2}\log(u)^b\left(a(a-1) + \frac{(2a-1)b}{\log(u)} + \frac{b^2-b}{\log(u)^2}\right).
\end{align*}

We now define, and by a direct calculation find:
\begin{align}
   \Phi_{1,\epsilon}(x) &= \frac{v'((1+\epsilon)u_{m-p,q}(x))}{(1+\epsilon)^{a-1}v'(  u_{m-p,q}(x) )}
   \\& =  \left(1 + \frac{\log(1+\epsilon)}{\log(u_{m-p,q})}\right)^b\left(1+\frac{b}{a\log((1+\epsilon)u_{m-p,q})}\right)\left(1+\frac{b}{a\log(u_{m-p,q})}\right)^{-1}\nonumber
\end{align}
and
\begin{align}
   \Phi_{2,\epsilon}(x) &=  \frac{(1+\epsilon)u_{m-p,q}(x)v''((1+\epsilon)u_{m-p,q}(x))}{-(1-a)v'((1+\epsilon) u_{m-p,q}(x) ) }
   \\&= 1 - \frac{b}{(1-a)\log((1+\epsilon)u_{m-p,q}(x))} + \frac{b}{(1-a)\log((1+\epsilon)u_{m-p,q}(x))(a\log((1+\epsilon)u_{m-p,q}(x)) + b)}\nonumber
\end{align}

\begin{lem}\label{lemma:Clemma}
Suppose $\epsilon > 0$ and denote by
\[  \Phi_{3,\epsilon}(x) =  \left(\left(1-\frac{q}{(m-p)u_{m-p,q}}\right)\Phi_{2,\epsilon}(x)\Phi_{1,\epsilon}(x)  +   \frac{(1+\epsilon)^{-a} q}{(m-p)(1-a)u_{m-p,q}} \right) \]
\begin{enumerate}
\item that
    \[ -f_x''((1+\epsilon)u_{m-p,q}(x)) = \frac{(m-p)(1-a)}{(1+\epsilon)^{2-a}u_{m-p,q}}\Phi_{3,\epsilon}(x) \]
\item that
\[   \frac{(m-p)(1-a)\epsilon \Phi_{3,\epsilon}(x) }{(1+\epsilon)^{2-a}}   \leq  -f_x'((1+\epsilon)u_{m-p,q}(x))  \leq  \epsilon u_{m-p,q}(x) (-f_x''(u_{m-p,q}(x)) )  = (m-p)(1-a)\epsilon \Phi_{3,0}(x)  \]
\item that
\[ f_x((1+\epsilon)u_{m-p,q}(x)) -  f_x(u_{m-p,q}(x)) \leq    - \frac{\epsilon u_{m-p,q}}{2}(-f_x'((1+\epsilon)u_{m-p,q}(x))) \leq - \frac{(m-p)(1-a)\epsilon^2 u_{m-p,q} \Phi_{3,\epsilon}(x) }{2(1+\epsilon)^{2-a}} \]
\end{enumerate}
\end{lem}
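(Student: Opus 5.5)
The plan is to compute directly at the point $U=(1+\epsilon)u_{m-p,q}(x)$, writing $u_*=u_{m-p,q}(x)$ for short. We use the formulas $f_x'(u)=-(m-p)-\log(x)v'(u)+q/u$ and $f_x''(u)=-\log(x)v''(u)-q/u^2$ from the proof of Proposition~\ref{prop:fx}, together with the defining relation of the critical point, $f'_x(u_*)=0$. That relation reads
\[ \log(x)v'(u_*)=-(m-p)\Bigl(1-\frac{q}{(m-p)u_*}\Bigr). \]

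\emph{Part (1).} Invert the definitions of $\Phi_{2,\epsilon}$ and $\Phi_{1,\epsilon}$:
\[ v''(U)=-\frac{(1-a)v'(U)\Phi_{2,\epsilon}}{U}, \qquad v'(U)=(1+\epsilon)^{a-1}v'(u_*)\Phi_{1,\epsilon}. \]
Substituting these and the relation for $\log(x)v'(u_*)$ into $-f''_x(U)=\log(x)v''(U)+q/U^2$ gives
\[ -f''_x(U)=\frac{(1-a)(1+\epsilon)^{a-1}(m-p)\bigl(1-\frac{q}{(m-p)u_*}\bigr)\Phi_{1,\epsilon}\Phi_{2,\epsilon}}{(1+\epsilon)u_*}+\frac{q}{(1+\epsilon)^2u_*^2}. \]
Factoring out $\frac{(m-p)(1-a)}{(1+\epsilon)^{2-a}u_*}$ leaves exactly $\Phi_{3,\epsilon}$. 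The second summand contributes $\frac{(1+\epsilon)^{-a}q}{(m-p)(1-a)u_*}$. This is routine bookkeeping.

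\emph{Part (2).} Since $f'_x(u_*)=0$, we have $-f'_x(U)=\int_{u_*}^{U}(-f''_x(s))\,ds$. By Proposition~\ref{prop:fx}(3), $-f''_x$ is positive and decreasing. Hence the integral lies between $\epsilon u_*(-f''_x(U))$ and $\epsilon u_*(-f''_x(u_*))$. Inserting part (1) at $\epsilon$ and at $\epsilon=0$ gives both bounds. At $\epsilon=0$ we have $\Phi_{1,0}=1$, so the right-hand side becomes $(m-p)(1-a)\epsilon\Phi_{3,0}$.

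\emph{Part (3).} This is the only step needing an idea beyond substitution. Since $f''_x$ is increasing, $f'_x$ is convex on $[u_*,U]$. A convex function lies below its chord, and here $f'_x(u_*)=0$, so for $s\in[u_*,U]$,
\[ f'_x(s)\le \frac{s-u_*}{\epsilon u_*}\,f'_x(U). \]
Integrating from $u_*$ to $U$ gives
\[ f_x(U)-f_x(u_*)\le \frac{\epsilon u_*}{2}f'_x(U)=-\frac{\epsilon u_*}{2}\bigl(-f'_x(U)\bigr). \]
Finally, apply the lower bound for $-f'_x(U)$ from part (2), which yields the stated right-hand side. The main point to verify carefully is that the convexity and monotonicity hypotheses are available on the whole interval. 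Both follow from Proposition~\ref{prop:fx}(3), since $u_*>w_0$ for $x$ in the admissible range.
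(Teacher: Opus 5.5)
Your proposal is correct and follows essentially the same route as the paper: direct substitution of the defining relations for $\Phi_{1,\epsilon}$, $\Phi_{2,\epsilon}$ and of $f'_x(u_{m-p,q})=0$ gives (1), and the monotonicity of $-f''_x$ on $[u_{m-p,q},(1+\epsilon)u_{m-p,q}]$ gives (2), where your integral formulation is just the paper's Taylor/mean-value step. For (3) you use convexity of $f'_x$ with the chord bound, as the paper does, and your use of (1) at $\epsilon=0$ is legitimate because that computation is purely algebraic and holds for all $\epsilon\geq 0$.
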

\begin{proof}
A simple calculation gives
\begin{align*} -f_x''((1+\epsilon)u_{m-p,q}(x))
&= \log(x)v''((1+\epsilon)u_{m-p,q}(x)) + \frac{q}{((1+\epsilon)u_{m-p,q}(x))^2} \\
&= (1-a)\frac{-\log(x)v'((1+\epsilon)u_{m-p,q}(x))}{(1+\epsilon)u_{m-p,q}(x)}\Phi_{2,\epsilon}(x) + \frac{q}{((1+\epsilon)u_{m-p,q}(x))^2} \\
&= (1-a)\frac{-\log(x)v'(u_{m-p,q}(x))}{(1+\epsilon)^{2-a}u_{m-p,q}(x)}\Phi_{2,\epsilon}(x)  \Phi_{1,\epsilon}(x) + \frac{q}{((1+\epsilon)u_{m-p,q}(x))^2} \\
&=  \frac{(m-p)(1-a)\left(1-\frac{q}{(m-p)u_{m-p,q}}\right)}{(1+\epsilon)^{2-a}u_{m-p,q}(x)}\Phi_{2,\epsilon}(x)  \Phi_{1,\epsilon}(x) + \frac{q}{((1+\epsilon)u_{m-p,q}(x))^2}.
\end{align*}
Simplifying then gives the first claim.

 Taylor's theorem, applied to $f_x'(u)$, together with the previous bound, gives the second. Then convexity of $f_x'(u)$, together with the previous bound, gives the third.
\end{proof}
\begin{Rem}
All of $\Phi_{1,\epsilon}(x)$, $\Phi_{2,\epsilon}(x)$ and $\Phi_{3,\epsilon}(x)$ have limits of $1$ as $x\to\infty$.
However, if $b<0$, then $\Phi_{1,\epsilon}(x) < 1$ and $\Phi_{2,\epsilon}(x) > 1$, and the exact behavior of $\Phi_{3,\epsilon}(x)$ depends on $\epsilon$, though it is eventually monotonic.
\end{Rem}

\begin{prop}\label{prop:Cvalue}
Define 
\[ \eta(x) := \frac{\sqrt{2\log(u_{m-p,q}(x))}}{\sqrt{(m-p)(1-a)u_{m-p,q}(x)}} \]
Suppose $T(x)\ge  t_{m-p,q}(x)^{1+\eta(x)}$, with $T(x) = \infty$ admissible.
 Then
\[  \frac{\int_{t_m(x)}^{T(x)} \frac{x^{-v(\log(t))}}{t^{m+1-p}}\log(t)^q  \,dt}{ F_x( t_{m-p,q}(x))}
<  \Phi_{4,\eta}(x)  \frac{\sqrt{2\pi u_{m-p,q}(x)}}{\sqrt{(m-p)(1-a)}}\]
where
\begin{align*}  \Phi_{4,\eta}(x)  &= \frac{1}{2}\operatorname{erf}\left(\delta_{\rm low}(x) \right) \Phi_{3,0}(x)^{-1/2} + \frac{1}{2}(1+\eta(x))^{(2-a)/2}\Phi_{3,\eta(x)}(x)^{-1/2}   \\&\qquad +
\frac{(1+\eta(x))^{2-a}}{2\sqrt{\pi\log(u_{m-p,q}(x))}\;\Phi_{3,\eta}(x)}\;
 u_{m-p,q}(x)^{-\frac{\Phi_{3,\eta}(x)}{(1+\eta(x))^{2-a}}}.
\end{align*}
\end{prop}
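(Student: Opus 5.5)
We split the integral at $t_{m-p,q}(x)$ and at $T_\eta(x):=t_{m-p,q}(x)^{1+\eta(x)}$, writing it as
\[ \int_{t_m}^{t_{m-p,q}} + \int_{t_{m-p,q}}^{T_\eta} + \int_{T_\eta}^{T(x)}, \]
each normalized by $F_x(t_{m-p,q}(x))$. Since the integrand is positive, we may replace $T(x)$ by $\infty$ in the last piece. Throughout, $u=u_{m-p,q}(x)$ and $\eta=\eta(x)$.

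\textbf{First two pieces.} For the first piece we apply the first bound of Proposition~\ref{prop:mainintegralabstract} and use Lemma~\ref{lemma:Clemma}(1) with $\epsilon=0$, namely $-f_x''(u)=(m-p)(1-a)\Phi_{3,0}(x)/u$. This gives
\[ \operatorname{erf}(\delta_{\rm low})\frac{\sqrt{\pi}}{\sqrt{-2f_x''(u)}}=\frac{1}{2}\operatorname{erf}(\delta_{\rm low})\,\Phi_{3,0}^{-1/2}\,\frac{\sqrt{2\pi u}}{\sqrt{(m-p)(1-a)}}, \]
which is the first term of $\Phi_{4,\eta}$. For the second piece we use the second bound of Proposition~\ref{prop:mainintegralabstract} with upper endpoint $T_\eta$, so that $\log T_\eta=(1+\eta)u$. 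Lemma~\ref{lemma:Clemma}(1) with $\epsilon=\eta$ gives
\[ -f_x''((1+\eta)u)=\frac{(m-p)(1-a)\Phi_{3,\eta}}{(1+\eta)^{2-a}u}, \]
so $\sqrt{\pi}/\sqrt{-2f_x''((1+\eta)u)}$ equals exactly the second term of $\Phi_{4,\eta}$ times the common factor. The Gaussian bound there was over $[u,\infty)$, so it dominates the integral up to $T_\eta$.

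\textbf{Tail piece.} This is the main step. We run the proof of Proposition~\ref{prop:tailintegralabstract} with $U=(1+\eta)u$ but keep the ratio $F_x(T_\eta)/F_x(t_{m-p,q})$ instead of bounding it by $1$. This gives
\[ \frac{\int_{T_\eta}^\infty}{F_x(t_{m-p,q})}\le \frac{\exp\big(f_x((1+\eta)u)-f_x(u)\big)}{-f_x'((1+\eta)u)}. \]
By Lemma~\ref{lemma:Clemma}(2), the denominator is at least $(m-p)(1-a)\eta\Phi_{3,\eta}/(1+\eta)^{2-a}$. By Lemma~\ref{lemma:Clemma}(3), the exponent is at most $-(m-p)(1-a)\eta^2 u\,\Phi_{3,\eta}/(2(1+\eta)^{2-a})$. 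With the chosen $\eta$ we have $(m-p)(1-a)\eta^2u/2=\log u$, so the exponential is at most $u^{-\Phi_{3,\eta}/(1+\eta)^{2-a}}$.

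It remains to check the prefactor. Since
\[ \frac{1}{(m-p)(1-a)\eta}=\frac{\sqrt{u}}{\sqrt{2(m-p)(1-a)\log u}}, \]
factoring out $\sqrt{2\pi u}/\sqrt{(m-p)(1-a)}$ leaves exactly $(1+\eta)^{2-a}/\big(2\sqrt{\pi\log u}\,\Phi_{3,\eta}\big)$, which is the third term.

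\textbf{Hypotheses and main obstacle.} Summing the three pieces gives the claim, with strict inequality coming from the Taylor steps. The point I expect to need the most care is the applicability of Lemma~\ref{lemma:Clemma} at $\epsilon=\eta(x)$: it requires $\Phi_{3,\eta}>0$ and that the relevant points lie beyond $w_0$, where $f_x''<0$ is increasing. These follow from Proposition~\ref{prop:fx}, since $u\ge u_m>w_0$. I would also verify the sign conventions in part (3), that is, that concavity gives $f_x(U)-f_x(u)\le -\tfrac{1}{2}(U-u)(-f_x'(U))$, so the tail estimate is legitimate.
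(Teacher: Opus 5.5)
Your proposal is correct and matches the paper's proof. As there, you split at $t_{m-p,q}(x)$ and $t_{m-p,q}(x)^{1+\eta(x)}$, bound the two inner pieces by Proposition~\ref{prop:mainintegralabstract}, bound the tail by the argument of Proposition~\ref{prop:tailintegralabstract} while keeping the ratio $F_x(T_\eta)/F_x(t_{m-p,q})$, and then substitute Lemma~\ref{lemma:Clemma}; your term-by-term bookkeeping, including $(m-p)(1-a)\eta^2u/2=\log u$, checks out.

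One small correction to your closing remark: the factor $\tfrac12$ in Lemma~\ref{lemma:Clemma}(3) comes from convexity of $f_x'$ (that is, $f_x''$ increasing) via the chord bound on $[u,(1+\eta)u]$. Concavity of $f_x$ alone only gives $f_x(U)-f_x(u)\le 0$.
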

\begin{proof}
We split the integral at $t_{m-p,q}$ and $t_{m-p,q}^{(1+\eta)}$ and apply respectively Propositions \ref{prop:mainintegralabstract} and \ref{prop:tailintegralabstract} to bound the parts.

Then, substituting the upper and lower bounds from Lemma \ref{lemma:Clemma}, and simplifying completes the proof.
\end{proof}
\begin{Rem}
In our applications we shall have that $\delta_{\rm low}(x)$  tends to $0$ and hence the entire lead term, including the $\operatorname{erf}$ term, tends to $0$ from above.
\end{Rem}

\subsubsection{Bounds on $E_{i1}(x)$, $E_{i2}(x)$ and $E_{i3}(x)$}\label{subsubsec:Ebounds}
We will analyze how to bound $E_{i1}(x)$, $E_{i2}(x)$ and $E_{i3}(x)$ mentioned after Definition \ref{defn:um}. Our approach is to present the bounds in a form where we can isolate the main factor $\exp(-m(1-a^{-1})\tilde{u}_{m,0}(x))$, which is
\begin{align*}
\exp\left( -m(1-a^{-1})(m^{-1}(1-a)^{-b}(-a)c)^{1/(1-a)} \log(x)^{1/(1-a)}\log\log(x)^{b/(1-a)} \right).
\end{align*}

\begin{prop}\label{Prop:Ei1}
Assume a zero-free region $v(u)$ as defined in this subsection and that $x$ is large enough so that the point $u_{m,0}(x)$ can be defined. Then
\begin{align*}
E_{i1}(x) &=\exp(-m(1-a^{-1})\tilde{u}_{m,0}(x))\\
 &\quad\times \exp\bigg( -m(1-a^{-1})\bigg(\left(1 + \frac{b}{a^2(1-a^{-1})\log(u_{m,0}(x)) +ab(1-a^{-1})}-\frac{p_i(\sigma(x))}{m(1-a^{-1})}\right)\\&\quad\times\left(\frac{1}{\hat{u}_{m,0}(x)}\right)^{1/(1-a)}-1\bigg)\tilde{u}_{m,0}(x)+ q_i(\sigma(x))\log(u_{m,0}(x))\bigg).
\end{align*}
\end{prop}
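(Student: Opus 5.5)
Since $E_{i1}(x)=\exp\big(-\log(x)v(u_{m,0})-(m-p_i)u_{m,0}+q_i\log(u_{m,0})\big)$ with $u_{m,0}=\log(t_m)$, the statement is an exact identity. The plan is to rewrite the exponent in terms of $\tilde{u}_{m,0}$ and $\hat{u}_{m,0}$ using \eqref{def:u2} and the defining equation of the critical point. Write $u=u_{m,0}(x)$ and $p=p_i(\sigma(x))$, $q=q_i(\sigma(x))$. The term $q\log(u)$ is already in final form, so only $-\log(x)v(u)-(m-p)u$ needs work.

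The key step is to eliminate $\log(x)$. With $q=0$ in the defining equation of $u_{m,0}$ we have $\log(x)v'(u)=-m$. Using the explicit derivative $v'(u)=cu^{a-1}\log(u)^b\left(a+\frac{b}{\log(u)}\right)=\frac{v(u)}{u}\left(a+\frac{b}{\log u}\right)$, this gives
\[ \log(x)v(u)=\frac{-mu}{a+b/\log(u)}=-\frac{m}{a}\,u\left(1+\frac{b}{a\log(u)}\right)^{-1}. \]
So the exponent becomes
\[ -m u\left(1-\frac{p}{m}-\frac{1}{a}\Big(1+\frac{b}{a\log u}\Big)^{-1}\right). \]

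Next, factor out $m(1-a^{-1})$. Writing $\big(1+\frac{b}{a\log u}\big)^{-1}=1-\frac{b}{a\log u+b}$, the bracket equals
\[ (1-a^{-1})+\frac{b}{a(a\log u+b)}-\frac{p}{m}. \]
Dividing by $(1-a^{-1})$, the middle term becomes
\[ \frac{b}{a^2(1-a^{-1})\log(u)+ab(1-a^{-1})}, \]
which is exactly the expression in the statement. Then substitute $u=\tilde{u}_{m,0}\,\hat{u}_{m,0}^{-1/(1-a)}$ from \eqref{def:u2}. Finally, add and subtract $m(1-a^{-1})\tilde{u}_{m,0}$ to split off the main factor $\exp(-m(1-a^{-1})\tilde{u}_{m,0}(x))$; this produces the "$-1$" inside the second exponential. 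One should check that the signs are consistent, noting that $1-a^{-1}>0$ since $a<0$.

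This is a purely algebraic verification, so there is no real obstacle. The only care needed is the sign bookkeeping when $a<0$ and $b$ may be negative, and confirming that $x$ is large enough that $a\log u+b\neq 0$. That nonvanishing is guaranteed by the hypothesis that $u_{m,0}$ is defined, via Proposition \ref{prop:fx} and Remark \ref{Rem:H}.
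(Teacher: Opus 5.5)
Your proposal is correct and matches the paper's proof essentially step for step. Like the paper, you write $E_{i1}(x)=\exp(f_x(u_{m,0}))$ and eliminate $\log(x)v(u_{m,0})$ using $v(u)/u=v'(u)/(a+b/\log u)$ together with $\log(x)v'(u_{m,0})=-m$. You then factor out $m(1-a^{-1})$, substitute \eqref{def:u2}, and add and subtract $m(1-a^{-1})\tilde{u}_{m,0}(x)$ to isolate the main factor.
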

\begin{proof}
We can prove this by deriving
\begin{align*}
E_{i1}(x) 
&=  \frac{x^{-v(\log(t_m))}}{t_m^m} t_m^{p_i(\sigma(x))}\log(t_m)^{q_i(\sigma(x))} \\
&=  \exp(f_{x,v,m,p_i(\sigma(x)),q_i(\sigma(x))}(u_{m,0}(x) ))  \\
&= \exp\bigg( u_{m,0}(x)\bigg( -m+p_i(\sigma(x))-\log(x)\frac{v(u_{m,0}(x))}{u_{m,0}(x)} \bigg) + q_i(\sigma(x))\log(u_{m,0}(x))\bigg). \\
\end{align*}
Now, by the definition of $v(u)$ we have $\frac{v(u)}{u} = \frac{v'(u)}{a+\frac{b}{\log(u)}}.$ and hence from $\log(x)v'(u_{m-p,q}(x))=-m+p+\frac{q}{u_{m-p,q}(x)}$ by rearranging we have
\begin{align*}
E_{i1}(x) &=\exp\left( -mu_{m,0}(x)\left(1-a^{-1} + \frac{b/a}{a\log(u_{m,0}(x)) +b}-\frac{p_i(\sigma(x))}{m}\right) + q_i(\sigma(x))\log(u_{m,0}(x))\right) \\
&= \exp\bigg( -m(1-a^{-1})\left(1 + \frac{b}{a^2(1-a^{-1})\log(u_{m,0}(x)) +ba(1-a^{-1})}-\frac{p_i(\sigma(x))}{m(1-a^{-1})}\right)\\&\quad\times\left(\frac{1}{\hat{u}_{m,0}(x)}\right)^{1/(1-a)} \tilde{u}_{m,0}(x)+ q_i(\sigma(x))\log(u_{m,0}(x))\bigg)\\
&=\exp(-m(1-a^{-1})\tilde{u}_{m,0}(x)+m(1-a^{-1})\tilde{u}_{m,0}(x))\\
 & \quad\times \exp\bigg( -m(1-a^{-1})\left(1 + \frac{b}{a^2(1-a^{-1})\log(u_{m,0}(x)) +ba(1-a^{-1})}-\frac{p_i(\sigma(x))}{m(1-a^{-1})}\right)\\&\quad\times\left(\frac{1}{\hat{u}_{m,0}(x)}\right)^{1/(1-a)} \tilde{u}_{m,0}(x)+ q_i(\sigma(x))\log(u_{m,0}(x))\bigg) \\
 &=\exp(-m(1-a^{-1})\tilde{u}_{m,0}(x))\\
 & \quad \times \exp\bigg( -m(1-a^{-1})\bigg(\left(1 + \frac{b}{a^2(1-a^{-1})\log(u_{m,0}(x)) +ba(1-a^{-1})}-\frac{p_i(\sigma(x))}{m(1-a^{-1})}\right)\\&\quad\times\left(\frac{1}{\hat{u}_{m,0}(x)}\right)^{1/(1-a)}-1\bigg)\tilde{u}_{m,0}(x)+ q_i(\sigma(x))\log(u_{m,0}(x))\bigg).
\end{align*}
\end{proof}
\begin{prop}\label{Prop:Ei2}
If $p_i(\sigma(x))=0$, then $E_{i2}(x)=0$. Assume $p_i(\sigma(x)) >0$ and $T(x)\geq t_{m-p,q}(x)^{1+\eta(x)}$. We have that
\begin{align*}
E_{i2}(x) &\leq \exp(-m(1-a^{-1})\tilde{u}_{m,0}(x)) \\
&\quad \times \exp\bigg(-m(1-a^{-1})\bigg(\bigg(1+\frac{b}{a^2(1-a^{-1})\log(u_{m-p_i(\sigma(x)),q_i(\sigma(x))}(x))+ab(1-a^{-1})} \\&\quad - \frac{p_i(\sigma(x))}{m(1-a^{-1})}\bigg(1-\frac{\log(u_{m-p_i(\sigma(x)),q_i(\sigma(x))}(x))}{a\log(u_{m-p_i(\sigma(x)),q_i(\sigma(x))}(x))+b} \bigg) \bigg) \\&\quad\times\bigg( \frac{1}{\hat{u}_{m-p_i(\sigma(x)),q_i(\sigma(x))}(x)(1-p_i(\sigma(x))/m)} \bigg)^{1/(1-a)} -1\bigg)\tilde{u}_{m,0}(x) \\&\quad+q_i(\sigma(x))\bigg( -\frac{\log(u_{m-p_i(\sigma(x)),q_i(\sigma(x))}(x))}{a\log(u_{m-p_i(\sigma(x)),q_i(\sigma(x))}(x))+b}+ \log(u_{m-p_i(\sigma(x)),q_i(\sigma(x))}(x)) \bigg) \\ &\quad +\log(p_i(\sigma(x))) +\frac{1}{2}\log(u_{m-p_i(\sigma(x)),q_i(\sigma(x))}(x))\bigg) \\
&\quad \times \Phi_{4,\eta}(x) \sqrt{\frac{2\pi}{(m-p_i(\sigma(x)))(1-a)}},
\end{align*}
where $\Phi_{4,\eta}(x)$ is from Proposition \ref{prop:Cvalue}.
\end{prop}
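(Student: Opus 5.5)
The plan is to reduce $E_{i2}(x)=p_i(\sigma(x))\,I_{v,m,p_i,q_i}(x)$ to Proposition \ref{prop:Cvalue} and then rewrite $F_x(t_{m-p,q}(x))$ in terms of $\tilde{u}_{m,0}(x)$, in the same way Proposition \ref{Prop:Ei1} treated $E_{i1}$. The case $p_i(\sigma(x))=0$ is immediate from the definition of $E_{i2}$. So I fix $p=p_i(\sigma(x))>0$ and $q=q_i(\sigma(x))$, and abbreviate $u=u_{m-p,q}(x)$. Since $T(x)\ge t_{m-p,q}(x)^{1+\eta(x)}$, Proposition \ref{prop:Cvalue} gives
\[ E_{i2}(x)\le p\,F_x(t_{m-p,q}(x))\,\Phi_{4,\eta}(x)\sqrt{\frac{2\pi}{(m-p)(1-a)}}\,\sqrt{u}. \]
I write $p=\exp(\log p)$ and $\sqrt u=\exp(\tfrac12\log u)$. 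This accounts for the terms $\log(p_i(\sigma(x)))+\frac12\log(u)$ in the exponent and for the trailing factor in the statement.

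Next I evaluate $\log F_x(t_{m-p,q})=f_x(u)=-(m-p)u-\log(x)v(u)+q\log u$. As in the proof of Proposition \ref{Prop:Ei1}, I use $v(u)/u=v'(u)/(a+b/\log u)$ together with the critical point equation $\log(x)v'(u)=-(m-p)+q/u$ from Proposition \ref{prop:fx}. This gives
\[ -\log(x)v(u)=\frac{\big((m-p)-q/u\big)\,u\log u}{a\log u+b}. \]
The $q/u$ piece contributes $-q\log u/(a\log u+b)$, which together with $q\log u$ gives the $q_i$-term in the statement. The remaining part is
\[ -u\Big[(m-p)-\frac{(m-p)\log u}{a\log u+b}\Big]. \]
I then rearrange the bracket into the form $m(1-a^{-1})\big(1+\frac{b}{a^2(1-a^{-1})\log u+ab(1-a^{-1})}\big)-p\big(1-\frac{\log u}{a\log u+b}\big)$. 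This uses the identity $\frac{\log u}{a\log u+b}=a^{-1}-\frac{b/a}{a\log u+b}$, applied to the $m$-part only; the $p$-part is left in its unsimplified form, matching the statement.

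Finally, I replace $u$ by $\big(\hat{u}_{m-p,q}(x)(1-p/m)\big)^{-1/(1-a)}\tilde{u}_{m,0}(x)$ using Proposition \ref{comparisonProp}. Factoring out $-m(1-a^{-1})\tilde u_{m,0}(x)$, I add and subtract $m(1-a^{-1})\tilde u_{m,0}(x)$, exactly as in the last lines of the proof of Proposition \ref{Prop:Ei1}. This produces the leading factor $\exp(-m(1-a^{-1})\tilde{u}_{m,0}(x))$ times the stated second exponential, with the "$-1$" inside.

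The main obstacle is purely bookkeeping: checking the sign conventions in the algebraic identity for the $m$-coefficient, noting that $a<0$ so $1-a^{-1}>0$. One also has to be sure that the critical-point equation is applied at $u_{m-p,q}$ and not at $u_{m,0}$. No inequality is lost in these steps except the one coming from Proposition \ref{prop:Cvalue}, so the bound follows.
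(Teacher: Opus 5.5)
Your proposal is correct and follows essentially the same route as the paper: apply Proposition \ref{prop:Cvalue} to $E_{i2}=p_i\,I_{v,m,p_i,q_i}$, absorb $p_i$ and $\sqrt{u_{m-p,q}}$ into the exponent, and then evaluate $f_x(u_{m-p,q})$ using the critical-point equation and Proposition \ref{comparisonProp}, exactly as in Proposition \ref{Prop:Ei1}. Your explicit verification of the identity $\frac{\log u}{a\log u+b}=a^{-1}-\frac{b/a}{a\log u+b}$ for the $m$-coefficient supplies the bookkeeping that the paper leaves to the reader.
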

\begin{proof}
Using Proposition \ref{prop:Cvalue} we derive
\begin{align*}
E_{i2}(x)
&=  p_i(\sigma(x))\int_{t_m}^{T} \frac{x^{-v(\log(t))}}{t^{m+1-p_i(\sigma(x))}}\log(t)^{q_i(\sigma(x))}dt \\
&\leq \Phi_{4,\eta}(x) \sqrt{\frac{2\pi}{(m-p_i(\sigma(x)))(1-a)}} \\ &\times p_i(\sigma(x)) \sqrt{u_{m-p_i(\sigma(x)),q_i(\sigma(x))}(x)}\exp(f_{x,v,m,p_i(\sigma(x)),q_i(\sigma(x))}(u_{m-p_i(\sigma(x)),q_i(\sigma(x))}(x) )) \\
&= \Phi_{4,\eta}(x) \sqrt{\frac{2\pi}{(m-p_i(\sigma(x)))(1-a)}} \\ &\times\exp(f_{x,v,m,p_i(\sigma(x)),q_i(\sigma(x))}(u_{m-p_i(\sigma(x)),q_i(\sigma(x))}(x) ) + \log(p_i(\sigma(x))) + \frac{1}{2}\log(u_{m-p_i(\sigma(x)),q_i(\sigma(x))}))\\
\end{align*}
and continue in the similar way as was done in the proof of Proposition \ref{Prop:Ei1}. Note that we applied Proposition \ref{comparisonProp}.
\end{proof}

\begin{prop}\label{Prop:Ei3}
Assume $q_i(\sigma(x)) \geq 1$ and $T(x)\geq t_{m-p,q-1}(x)^{1+\eta(x)}$. We have that:
\begin{align*}
E_{i3}(x) &\leq \exp(-m(1-a^{-1})\tilde{u}_{m,0}(x)) \\
&\quad \times \exp\bigg(-m(1-a^{-1})\bigg(\bigg(1+\frac{b}{a^2(1-a^{-1})\log(u_{m-p_i(\sigma(x)),q_i(\sigma(x))-1}(x))+ab(1-a^{-1})} \\&\quad - \frac{p_i(\sigma(x))}{m(1-a^{-1})}\bigg(1-\frac{\log(u_{m-p_i(\sigma(x)),q_i(\sigma(x))-1}(x))}{a\log(u_{m-p_i(\sigma(x)),q_i(\sigma(x))-1}(x))+b} \bigg) \bigg) \\&\quad\times\bigg( \frac{1}{\hat{u}_{m-p_i(\sigma(x)),q_i(\sigma(x))-1}(x)(1-p_i(\sigma(x))/m)} \bigg)^{1/(1-a)} -1\bigg)\tilde{u}_{m,0}(x) \\&\quad+(q_i(\sigma(x))-1)\bigg( -\frac{\log(u_{m-p_i(\sigma(x)),q_i(\sigma(x))-1}(x))}{a\log(u_{m-p_i(\sigma(x)),q_i(\sigma(x))-1}(x))+b}+ \log(u_{m-p_i(\sigma(x)),q_i(\sigma(x))-1}(x)) \bigg) \\ &\quad +\log(q_i(\sigma(x))) +\frac{1}{2}\log(u_{m-p_i(\sigma(x)),q_i(\sigma(x))-1}(x))\bigg) \\
&\quad \times \tilde{\Phi}_{4,\eta}(x) \sqrt{\frac{2\pi}{(m-p_i(\sigma(x)))(1-a)}},
\end{align*}
where $\tilde{\Phi}_{4,\eta}(x)$ is obtained by changing $q$ in $\Phi_{4,\eta}(x)$ from Proposition \ref{prop:Cvalue} to $q-1$.
\end{prop}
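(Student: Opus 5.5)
The bound for $E_{i3}$ will be proved the same way as Proposition \ref{Prop:Ei2}, with $q$ replaced by $q-1$ and the prefactor $p_i$ replaced by $q_i$. Write $p=p_i(\sigma(x))$ and $q=q_i(\sigma(x))$. By definition $E_{i3}(x)=q\,I_{v,m,p,q-1}(x)$. The hypothesis $q\geq 1$ makes $q-1\geq 0$, which is exactly what the abstract theory of Section \ref{subsec:nearoneabstract} requires.

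\textbf{Step 1: the integral.} I apply Proposition \ref{prop:Cvalue} with $q-1$ in place of $q$. Its hypothesis is $T(x)\geq t_{m-p,q-1}(x)^{1+\eta(x)}$, which is assumed, with $\eta$ computed from $u_{m-p,q-1}$. This gives
\[
E_{i3}(x)\leq \tilde{\Phi}_{4,\eta}(x)\sqrt{\frac{2\pi}{(m-p)(1-a)}}\,\exp\Big(f_{x,v,m,p,q-1}(u_{m-p,q-1}(x))+\log(q)+\tfrac12\log(u_{m-p,q-1}(x))\Big).
\]
The factor $q\sqrt{u}$ has simply been absorbed into the exponent.

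\textbf{Step 2: the value of $f_x$ at its critical point.} Set $u=u_{m-p,q-1}$. Then
\[
f_x(u)=-(m-p)u-\log(x)v(u)+(q-1)\log u .
\]
For $v=v_{a,b,c}$ we have the identity $v(u)/u=v'(u)/(a+b/\log u)$. The critical point equation gives
\[
\log(x)v'(u)=-(m-p)+\frac{q-1}{u}.
\]
Substituting, $-\log(x)v(u)$ becomes
\[
\frac{u\log u}{a\log u+b}\Big((m-p)-\frac{q-1}{u}\Big).
\]
The piece coming from $(q-1)/u$ produces the term $-(q-1)\frac{\log u}{a\log u+b}$. The piece coming from $m-p$ combines with $-(m-p)u$. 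After factoring out $-m(1-a^{-1})u$, using the algebraic identity
\[
\frac{\log u}{a\log u+b}=\frac1a-\frac{b/a}{a\log u+b},
\]
one recovers the bracket
\[
1+\frac{b}{a^2(1-a^{-1})\log u+ab(1-a^{-1})}-\frac{p}{m(1-a^{-1})}\Big(1-\frac{\log u}{a\log u+b}\Big),
\]
matching the bracket in Proposition \ref{Prop:Ei2}.

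\textbf{Step 3: reduction to $\tilde{u}_{m,0}$.} I invoke Proposition \ref{comparisonProp} with $(m-p,q-1)$:
\[
u=\Big(\hat{u}_{m-p,q-1}(1-p/m)\Big)^{-1/(1-a)}\tilde{u}_{m,0}.
\]
Then I add and subtract $m(1-a^{-1})\tilde{u}_{m,0}(x)$, exactly as in the proof of Proposition \ref{Prop:Ei1}. This isolates the main factor $\exp(-m(1-a^{-1})\tilde{u}_{m,0}(x))$ and leaves the stated correction exponent.

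The only real obstacle is the sign and algebra bookkeeping in Step 2. Specifically, one must check that the coefficient of $p$ comes out as $\big(1-\frac{\log u}{a\log u+b}\big)$. I would verify it by expanding both sides as rational functions of $\log u$. Everything else is a verbatim transcription of the $E_{i2}$ argument.
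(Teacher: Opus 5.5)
Your proposal is correct and follows the same route as the paper. You apply Proposition \ref{prop:Cvalue} with $q-1$ in place of $q$, absorb the factor $q_i(\sigma(x))\sqrt{u_{m-p,q-1}}$ into the exponent, and then redo the algebra of Propositions \ref{Prop:Ei1} and \ref{Prop:Ei2} using the critical-point equation and Proposition \ref{comparisonProp}. Your Step 2 spells out the bookkeeping that the paper leaves as ``minor modifications,'' and it checks out, including the coefficient $1-\frac{\log u}{a\log u+b}$ multiplying $p$.
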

\begin{proof}
The derivation of this formula begins with noting that
\begin{align*}
E_{i3}(x)
&=  q_i(\sigma(x))\int_{t_m}^{T} \frac{x^{-v(\log(t))}}{t^{m+1-p_i(\sigma(x))}}\log(t)^{q_i(\sigma(x))-1}dt \\
&\leq \tilde{\Phi}_{4,\eta}(x) \sqrt{\frac{2\pi}{(m-p_i(\sigma(x)))(1-a)}} \\ &\times  q_i(\sigma(x)) \sqrt{u_{m-p_i(\sigma(x)),q_i(\sigma(x))-1}(x)}\exp(f_{x,v,m,p_i(\sigma(x)),q_i(\sigma(x))-1}(u_{m-p_i(\sigma(x)),q_i(\sigma(x))-1}(x) )) \\
&= \tilde{\Phi}_{4,\eta}(x) \sqrt{\frac{2\pi}{(m-p_i(\sigma(x)))(1-a)}} \\ &\times\exp(f_{x,v,m,p_i(\sigma(x)),q_i(\sigma(x))-1}(u_{m-p_i(\sigma(x)),q_i(\sigma(x))-1}(x) ) + \log(q_i(\sigma(x))) + \frac{1}{2}\log(u_{m-p_i(\sigma(x)),q_i(\sigma(x))-1}))\\
\end{align*}
and we continue in a similar way as was done in the proof of Proposition \ref{Prop:Ei2} with minor modifications.
\end{proof}

\subsubsection{Bounds for $E_{i1}(x)$, $E_{i2}(x)$ and $E_{i3}(x)$ for Specific Parameters}\label{subsubsec:Eboundsparameters}

Throughout this subsection we assume $m=1$, $v(u) = cu^{a}(\log(u))^{b}$ with $a=-2/3$, $b=-1/3$ and $c=1/53.989$ \cite{BellottiZFR}, $p_1(\sigma) =\frac{8(1-\sigma)}{3}$, $p_2(\sigma)=0$, $q_1(\sigma)=5-2\sigma$ and $q_2(\sigma)=2$. 
We also set $T(x) = t_1(x)^{1+\epsilon}$ with $\epsilon=2$ and $t_1(x)$ is as defined in Proposition \ref{prop:tm}.

Next, we choose $\sigma$ as
\begin{align*}
\sigma(x) = 1+\frac{-\tilde{u}_{1,0}(x)-\log(x)v(\tilde{u}_{1,0}(x))+4\log(\tilde{u}_{1,0}(x))}{\log(x)}.
\end{align*}
\begin{lem}\label{lem:sigmabounds}
The following bounds related to $\sigma(x)$ hold:
\begin{align}
1-\sigma(x) &\leq \frac{1}{4\log(x)^{2/5}\log\log(x)^{1/5}}, \quad \textrm{ for all } \log(x) \geq 10000, \label{sigmaBound} \\
\sigma'(x) &\leq \frac{1}{10x\log(x)^{7/5}\log\log(x)^{1/5}}, \quad \textrm{ for all } \log(x) \geq 10000. \label{sigmaprimeBound}
\end{align}
\end{lem}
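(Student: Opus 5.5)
The plan is to work entirely in the variable $L=\log(x)$. With $m=1$, $q=0$, $a=-2/3$, $b=-1/3$ and $c=1/53.989$, definition \eqref{def:tildeu} gives
\[ \tilde{u}:=\tilde{u}_{1,0}(x)=K\,L^{3/5}(\log L)^{-1/5}, \qquad K=\left(\left(\tfrac{5}{3}\right)^{1/3}\tfrac{2}{3}c\right)^{3/5}\approx 0.0793 . \]
Write $g(L)=\tilde{u}+L\,v(\tilde{u})-4\log(\tilde{u})$, so that $1-\sigma(x)=g(L)/L$.

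\textbf{The bound \eqref{sigmaBound}.} First, $\tilde u>1$ on the range considered, so I drop the term $-4\log(\tilde u)/L$, which is negative. The first remaining term is
\[ \frac{\tilde{u}}{L}=K L^{-2/5}(\log L)^{-1/5}. \]
For the second term,
\[ v(\tilde u)=cK^{-2/3}L^{-2/5}(\log L)^{2/15}(\log\tilde u)^{-1/3}, \qquad \log\tilde u=\tfrac35\log L-\tfrac15\log\log L+\log K . \]
This gives
\[ v(\tilde u)=cK^{-2/3}\left(\tfrac35\right)^{-1/3}\rho(L)^{-1/3}\,L^{-2/5}(\log L)^{-1/5}, \qquad \rho(L)=\frac{\log\tilde u}{\tfrac35\log L}. \]
The ratio $\rho(L)$ is increasing in $L$. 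Hence for $L\ge 10000$ the constant is at most its value at $L=10000$; there $\log\tilde u\approx 2.55$ and $\rho\approx 0.46$. Numerically,
\[ K+cK^{-2/3}(3/5)^{-1/3}\rho(10^4)^{-1/3}\approx 0.079+0.154<1/4 . \]
I will certify these constants with rigorous rounding; the monotonicity of $\rho$ is a one-line derivative check.

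\textbf{The bound \eqref{sigmaprimeBound}.} Since $\sigma'(x)=\frac{1}{x}\frac{d\sigma}{dL}$, it suffices to bound
\[ \frac{d\sigma}{dL}=\frac{g}{L^2}-\frac{g'}{L}. \]
Naively this is only about $\tfrac25 g/L^2\approx 0.093\,L^{-7/5}(\log L)^{-1/5}$, which is too tight to be comfortable. Instead I will use the near-criticality of $\tilde u$. Differentiating gives
\[ g'(L)=\tilde u'\,(1+Lv'(\tilde u))+v(\tilde u)-4\tilde u'/\tilde u . \]
By Proposition \ref{prop:Bounds} (with $b<0$; its hypotheses I will check for $L\ge 10^4$), we have $\hat u_{1,0}<1$, so $u_{1,0}=\tilde u\,\hat u_{1,0}^{-3/5}>\tilde u$. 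Since $v'$ is increasing, $v'(\tilde u)<v'(u_{1,0})=-1/L$. Therefore $1+Lv'(\tilde u)<0$, and as $\tilde u'>0$ the first term of $g'$ is negative. It therefore contributes non-negatively to $-g'/L$, and I discard it to get an upper bound:
\[ \frac{d\sigma}{dL}\le \frac{\tilde u-4\log\tilde u}{L^2}+\frac{4\tilde u'}{\tilde u L}. \]
The $Lv(\tilde u)$ part of $g/L^2$ cancels exactly against the $-v(\tilde u)/L$ coming from $g'$.

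Next, $\tilde u'/\tilde u=\frac{3}{5L}-\frac{1}{5L\log L}\le \frac{3}{5L}$, so
\[ \frac{d\sigma}{dL}\le K L^{-7/5}(\log L)^{-1/5}-\frac{4\log\tilde u-12/5}{L^2}. \]
Because $\log\tilde u\ge 2.5>3/5$ for $L\ge 10^4$, the last term is negative. Since $K<1/10$, dividing by $x$ gives \eqref{sigmaprimeBound}.

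The main obstacle I expect is making the sign argument $1+Lv'(\tilde u)<0$ rigorous, that is, verifying the hypotheses of Proposition \ref{prop:Bounds} at $L\ge 10^4$. Concretely, this requires $\tilde u>e$ and $\log(K^{5/3})-\tfrac13\log\log L<0$, which is clear since $K^{5/3}\approx 0.0146$. I also need to check that $\mathcal H$ has no poles there. If this route fails, the fallback is the direct estimate $\tfrac25 g/L^2$ plus explicit error terms, which has less slack but should still fit under $1/10$.
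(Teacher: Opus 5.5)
Your proof of \eqref{sigmaBound} is fine. Dropping $-4\log(\tilde u)/L$, rewriting $v(\tilde u)$ through $\rho(L)$, and using that $\rho$ is increasing reduces the claim to one evaluation ($\approx 0.079+0.154<1/4$). This is a cleaner version of the numerical check the paper performs.

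The argument for \eqref{sigmaprimeBound}, however, has a sign error at its key step. After the (correct) cancellation of the $v(\tilde u)$ terms you have the exact identity
\[ \frac{d\sigma}{dL}=\frac{\tilde u-4\log\tilde u}{L^2}+\frac{\tilde u'}{L}\bigl(-1-Lv'(\tilde u)\bigr)+\frac{4\tilde u'}{\tilde u\,L}. \]
You correctly show $1+Lv'(\tilde u)<0$, but this makes the middle term \emph{positive}. Discarding it therefore gives a lower bound for $d\sigma/dL$, not an upper bound; your own sentence says it ``contributes non-negatively''. No better sign argument can rescue the step: dropping the term legitimately would need $1+Lv'(\tilde u)\ge 0$, i.e.\ $\tilde u\ge u_{1,0}$, which contradicts $\hat u_{1,0}<1$.

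The term is not negligible either. At $L=10^4$ one has $-Lv'(\tilde u)\approx 1.55$, the discarded term is $\approx 4.05\cdot 10^{-8}$, and $d\sigma/dL\approx 8.95\cdot 10^{-8}$. Your claimed bound $KL^{-7/5}(\log L)^{-1/5}-(4\log\tilde u-12/5)/L^2$ is only $\approx 4.99\cdot 10^{-8}$ there. Worse, $xL^{7/5}(\log L)^{1/5}\sigma'(x)$ tends to $K$ from \emph{above}; it is already $\approx 0.084>K$ at $L=10^6$. So no bound with constant $K$ holds, and the lemma is true only because $1/10$ leaves room.

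To repair it, keep the middle term and evaluate it. Use $\tilde u'=\theta\tilde u/L$ with $\theta=\frac35-\frac{1}{5\log L}$, together with $\tilde u\, v'(\tilde u)=-v(\tilde u)\bigl(\frac23+\frac{1}{3\log\tilde u}\bigr)$ and $cK^{-2/3}(5/3)^{1/3}=\frac32 K$. One then gets exactly
\[ xL^{7/5}(\log L)^{1/5}\sigma'(x)=K\Bigl[(1-\theta)+\theta\rho^{-1/3}\Bigl(1+\frac{1}{2\log\tilde u}\Bigr)\Bigr]-4(\log\tilde u-\theta)L^{-3/5}(\log L)^{1/5}. \]
This is about $0.056$ at $L=10^4$, peaks near $0.087$ around $L\approx 10^{7.5}$, and decreases to $K$.

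The negative last term cannot be dropped near $L=10^4$, since the first part alone is $\approx 0.1045$ there. If you verify that the bracket and the last term are both decreasing in $L$, two evaluations suffice. The expression is at most $0.1045-0.0215\approx 0.083$ on $[10^4,10^5]$, and at most $\approx 0.096$ for $L\ge 10^5$.

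This is, in effect, the numerical approximation of $\sigma'(x)\,x\log(x)^{7/5}\log\log(x)^{1/5}$ that the paper's one-line proof refers to. Your stated fallback points in this direction but does not carry it out.
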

\begin{proof}
These bounds can be verified by assuming $\log(x) \geq 10000$ and approximating the expressions $(1-\sigma(x))(\log(x)^{2/5}\log\log(x)^{1/5})$ and $\sigma'(x)(x\log(x)^{7/5}\log\log(x)^{1/5})$.
\end{proof}

\begin{cor}
For all $\log(x) \geq 10000$ we have the following bounds
\begin{align}
1-p_1(\sigma(x)) \geq 1-\frac{2}{3\log(x)^{2/5}\log\log(x)^{1/5}}, \label{p1Bound} \\
\frac{d}{dx}(1-p_1(\sigma(x))) \leq \frac{4}{15x\log(x)^{7/5}\log\log(x)^{1/5}}. \label{p1primeBound}
\end{align}
\end{cor}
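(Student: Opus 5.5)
The corollary follows directly from Lemma \ref{lem:sigmabounds}, because $p_1(\sigma)=\frac{8}{3}(1-\sigma)$ is an affine function of $\sigma$. The plan is simply to substitute the two bounds \eqref{sigmaBound} and \eqref{sigmaprimeBound} into the explicit formula for $1-p_1(\sigma(x))$ and into its derivative. No new analytic input is needed.

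For \eqref{p1Bound}, write
\[ 1-p_1(\sigma(x)) = 1-\tfrac{8}{3}(1-\sigma(x)). \]
By \eqref{sigmaBound}, valid for $\log(x)\geq 10000$,
\[ \tfrac{8}{3}(1-\sigma(x)) \leq \tfrac{8}{3}\cdot\frac{1}{4\log(x)^{2/5}\log\log(x)^{1/5}} = \frac{2}{3\log(x)^{2/5}\log\log(x)^{1/5}}. \]
This gives \eqref{p1Bound} immediately.

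For \eqref{p1primeBound}, differentiate:
\[ \frac{d}{dx}\bigl(1-p_1(\sigma(x))\bigr) = \frac{d}{dx}\Bigl(1-\tfrac{8}{3}+\tfrac{8}{3}\sigma(x)\Bigr) = \tfrac{8}{3}\sigma'(x). \]
Applying \eqref{sigmaprimeBound} with the positive factor $\frac{8}{3}$ gives
\[ \tfrac{8}{3}\cdot\frac{1}{10x\log(x)^{7/5}\log\log(x)^{1/5}} = \frac{4}{15x\log(x)^{7/5}\log\log(x)^{1/5}}, \]
which is \eqref{p1primeBound}.

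There is no real obstacle here. The only points to check are the sign of the derivative of $1-p_1(\sigma)$ with respect to $\sigma$, namely $+\frac{8}{3}>0$, so that an upper bound on $\sigma'$ transfers to an upper bound of the same direction, and the arithmetic $\frac{8}{3}\cdot\frac14=\frac23$ and $\frac{8}{3}\cdot\frac1{10}=\frac4{15}$. All the substantive work sits in Lemma \ref{lem:sigmabounds}, which we take as given.
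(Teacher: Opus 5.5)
Your proposal is correct and matches the paper, which states this corollary without proof as an immediate consequence of Lemma~\ref{lem:sigmabounds}. The point is that $1-p_1(\sigma)=1-\tfrac{8}{3}(1-\sigma)$ is affine in $\sigma$ with positive slope $\tfrac{8}{3}$, so both bounds transfer directly with $\tfrac{8}{3}\cdot\tfrac14=\tfrac23$ and $\tfrac{8}{3}\cdot\tfrac1{10}=\tfrac4{15}$.
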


\begin{lem}\label{lem:hathelp}
Assume $\log(x)\geq 10000$, that $q(x)$ is decreasing in $x$ and that $0 \leq q(x) \leq 4$. Then we have that
\begin{enumerate}
\item \begin{align*}
\frac{\frac{1}{3}\log\log\log(x)+\log(1-p_1(\sigma(x)))}{\log\log(x)}
\end{align*}
is strictly decreasing in $x$;
\item $\tilde{u}_{1-p_1(\sigma(x)),q(x)}(x)$ is strictly increasing in $x$; and
\item $\tilde{u}_{1-p_1(\sigma(x)),q(x)}(x) > \max\{q(x)/(1-p_1(\sigma(x))),e\}$ and $\tilde{u}_{1,q(x)}(x) > \max\{q(x),e\}$.
\end{enumerate}
\end{lem}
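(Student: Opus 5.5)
The three parts are elementary monotonicity and size checks. Throughout I use $a=-2/3$, $b=-1/3$, $c=1/53.989$, the definition \eqref{def:tildeu} of $\tilde{u}$, and the bounds \eqref{p1Bound} and \eqref{p1primeBound}. Write $n(x)=1-p_1(\sigma(x))$ and $L=\log\log(x)\ge\log(10000)\approx 9.21$. By \eqref{p1Bound}, $n(x)\in(1-\varepsilon,1]$ with $\varepsilon\le \frac{2}{3}\cdot 10^{-8/5}\,9.21^{-1/5}\approx 0.0017$.

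\textbf{Part (1).} Let $g(x)=\frac{1}{3}\log\log\log(x)+\log(n(x))$. The function $\frac{g}{L}$ is strictly decreasing iff $g'L-gL'<0$. Since $L'=\frac{1}{x\log(x)}$, this is equivalent to
\[ x\log(x)\,L\,g'(x) < g(x). \]
\begin{enumerate}
\item The first piece of $g$ contributes $x\log(x)\,L\cdot\frac{1}{3}\cdot\frac{1}{L}\cdot\frac{1}{x\log(x)}=\frac{1}{3}$.
\item The second piece contributes $\frac{x\log(x)L\,n'(x)}{n(x)}$. By \eqref{p1primeBound} this is at most
\[ \frac{4L}{15\,n(x)\log(x)^{2/5}L^{1/5}}, \]
which is tiny for $\log(x)\ge 10^4$ (about $0.014$).
\item On the other side, $g(x)\ge \frac{1}{3}\log(9.21)+\log(1-\varepsilon)\approx 0.74$.
\end{enumerate}
So the left side is below roughly $0.35$ while the right side exceeds $0.74$. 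The inequality holds, and part (1) follows.

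\textbf{Part (2).} Take logarithms of \eqref{def:tildeu}. With $1-a=5/3$ and $b/(1-a)=-1/5$, this gives
\[ \log\tilde{u}_{n(x),q(x)}(x)=\frac{3}{5}\Big(\log(K)-\log(n(x))+\log\log(x)\Big)-\frac{1}{5}\log L, \qquad K=(5/3)^{1/3}\tfrac{2}{3}c. \]
Note that $\tilde{u}$ does not depend on $q$, so the hypotheses on $q(x)$ do not enter here. The derivative in $x$ is
\[ \frac{1}{x\log(x)}\left(\frac{3}{5}-\frac{1}{5L}\right)-\frac{3}{5}\frac{n'(x)}{n(x)}. \]
The first term is positive since $L>1/3$. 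By \eqref{p1primeBound}, $n'(x)$ is at most $\frac{4}{15 x\log(x)^{7/5}L^{1/5}}$, so the second term is negligible against the first; indeed the ratio is $O(\log(x)^{-2/5})$. If $n'\le 0$, the second term is nonnegative and helps. Either way the derivative is positive, which gives part (2).

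\textbf{Part (3).} By part (2), both $\tilde{u}_{n(x),q}$ and $\tilde{u}_{1,q}$ (the latter trivially) are increasing in $x$. Because $q(x)$ is decreasing and $n(x)\ge 1-\varepsilon$, the right-hand sides are at most $\max\{4/(1-\varepsilon),e\}\approx 4.01$. It therefore suffices to check the value at $\log(x)=10000$. There
\[ \tilde{u}\approx (K\cdot 10^4)^{3/5}\,9.21^{-1/5}, \qquad K\approx 0.0146, \]
so $K\cdot 10^4\approx 146$. This gives $146^{3/5}\approx 19.9$, divided by about $1.56$, for a value near $12.8>4.01$. The smaller $n$ only increases $\tilde{u}$. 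Hence part (3) holds.

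The main obstacle is part (1): one has to see that the $n(x)$ term is dominated. I have handled this with the crude numeric estimates above rather than any asymptotic argument.
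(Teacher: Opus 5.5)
Your argument is the paper's: in (1) the inequality $x\log(x)\,L\,g'(x)<g(x)$ is exactly the paper's negative-numerator expression, bounded via \eqref{p1Bound} and \eqref{p1primeBound}. In (2) you bound the same derivative with the same two estimates (through the logarithmic derivative instead of the product rule), and (3) is the same monotonicity argument plus a direct evaluation at $\log(x)=10000$.

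Only your arithmetic needs fixing. At $\log(x)=10000$ one has $\varepsilon\approx 0.0107$ (not $0.0017$), and the $n'$-contribution is about $0.040$ (not $0.014$). So the comparison in (1) is roughly $0.373<0.729$, which still leaves ample margin. Also, $n(x)\le 1$ follows from $\sigma(x)<1$, not from \eqref{p1Bound}.
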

\begin{proof}
We begin with (a) by noting that the numerator of
\begin{align*}
\frac{d}{dx}\left( \frac{\frac{1}{3}\log\log\log(x)+\log(1-p_1(\sigma(x)))}{\log\log(x)} \right)
\end{align*}
is by \eqref{p1Bound} and \eqref{p1primeBound} bounded from above by
\begin{align*}
\frac{1}{3}-\frac{1}{3}\log\log\log(x) + \frac{12\log\log(x)}{45\log(x)^{2/5}\log\log(x)^{1/5}-30}-\log\bigg( 1 - \frac{2}{3\log(x)^{2/5}\log\log(x)^{1/5}} \bigg)
\end{align*}
and this is negative for all $\log(x) \geq 10000$ while the denominator is $\log\log(x)^2$, which is always positive.

For (b) we note that $\tilde{u}_{1-p_1(\sigma(x)),q(x)}=(1-p_1(\sigma(x)))^{-3/5}\tilde{u}_{1,q(x)}$ and that
\begin{align*}
\frac{d}{dx}(\tilde{u}_{1,q(x)}(x)) = \frac{3 \, \left(\frac{5}{3}\right)^{\frac{1}{5}} \left(\frac{2000}{161967}\right)^{\frac{3}{5}}}{5 \, x \log\left(x\right)^{\frac{2}{5}} \log\left(\log\left(x\right)\right)^{\frac{1}{5}}} - \frac{\left(\frac{5}{3}\right)^{\frac{1}{5}} \left(\frac{2000}{161967}\right)^{\frac{3}{5}}}{5 \, x \log\left(x\right)^{\frac{2}{5}} \log\left(\log\left(x\right)\right)^{\frac{6}{5}}}, 
\end{align*}
which further implies that $\frac{d}{dx}((1-p_1(\sigma(x)))^{-3/5}\tilde{u}_{1,q(x)})$ is bounded according to \eqref{p1Bound} and \eqref{p1primeBound} from below by
\begin{align*}
&\frac{3 \, \left(\frac{5}{3}\right)^{\frac{1}{5}} \left(\frac{2000}{161967}\right)^{\frac{3}{5}}}{5 \, x \log\left(x\right)^{\frac{2}{5}} \log\left(\log\left(x\right)\right)^{\frac{1}{5}}} - \frac{\left(\frac{5}{3}\right)^{\frac{1}{5}} \left(\frac{2000}{161967}\right)^{\frac{3}{5}}}{5 \, x \log\left(x\right)^{\frac{2}{5}} \log\left(\log\left(x\right)\right)^{\frac{6}{5}}} \\ &-\frac{4 \, \left(\frac{5}{3}\right)^{\frac{1}{5}} \left(\frac{2000}{161967}\right)^{\frac{3}{5}}}{25 \, x {\left(-\frac{2}{3 \, \log\left(x\right)^{\frac{2}{5}} \log\left(\log\left(x\right)\right)^{\frac{1}{5}}} + 1\right)}^{\frac{8}{5}} \log\left(x\right)^{\frac{4}{5}} \log\left(\log\left(x\right)\right)^{\frac{2}{5}}}
\end{align*}
and this is positive for $\log(x) \geq 10000$.

Finally, (c) can be verified by a direct computation as we make use of the fact that $\tilde{u}_{1-p_1(\sigma(x)),q(x)}(x)$ and $\tilde{u}_{1,q(x)}(x)$ are both strictly increasing and that $q(x)/n < 4.05$ under our assumptions.
\end{proof}
\begin{lem}\label{H-increase}
Assume that $a=-2/3$, $b=-1/3$, $c=1/53.989$, either $n(x)=1-p_1(\sigma(x))$ or $n(x)=1$ and $q=q(x)$ where $q(x)$ is decreasing and $0 \leq q(x) \leq 4$. We have that $\mathcal{H}(x,d)$ is strictly increasing when $\log(x)\geq 10000$ and $d$ is a constant satisfying $ 0.1 \leq d \leq 1$.
\end{lem}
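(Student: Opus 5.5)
We will show that each of the three factors $\mathcal{H}_1(x,d)$, $\mathcal{H}_2(x,d)$, $\mathcal{H}_3(x,d)$ is positive and strictly increasing in $x$ on $\log(x)\ge 10000$, $0.1\le d\le 1$. Positivity lets us conclude by the product rule, or equivalently by showing that $\log\mathcal{H}$ has positive derivative. Here $a=-2/3$, $b=-1/3$, $1-a=5/3$, $1/(1-a)=3/5$. Write $\kappa(x)=\log(n(x)^{-1}(1-a)^{-b}(-a)c)$. When $n=1$ this is a negative constant; when $n=1-p_1(\sigma(x))$ it equals $\kappa_1-\log(1-p_1(\sigma(x)))$.

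\textbf{Factor $\mathcal{H}_1$.} With $-b=1/3$ we have
\[\mathcal{H}_1(x,d)=\Big(1+\frac{\kappa(x)-\tfrac13\log\log\log(x)-\log d}{\log\log(x)}\Big)^{1/3}.\]
It therefore suffices that the quotient inside is increasing in $x$, i.e.\ that
\[\frac{\tfrac13\log\log\log(x)+\log(1-p_1(\sigma(x)))-\kappa_1+\log d}{\log\log(x)}\]
is decreasing. Consider the piece $\tfrac13\log\log\log(x)+\log(1-p_1(\sigma(x)))$ divided by $\log\log(x)$. This is exactly Lemma \ref{lem:hathelp}(a) (and in the case $n=1$ the $\log(1-p_1)$ term is absent, for which the same computation is easier). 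The remaining piece $(\log d-\kappa_1)/\log\log(x)$ has constant positive numerator, since $\kappa_1<0$ and $d\ge 0.1$ give $-\kappa_1+\log d>0$ numerically. Hence that piece is decreasing too.

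We also need the base of the cube root to be positive. This follows from Proposition \ref{prop:Bounds}-type estimates at $\log(x)\ge 10000$ by a numerical check; the base is close to $1$ there, since $\log\log(x)\ge 9.2$.

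\textbf{Factor $\mathcal{H}_2$.} We have
\[\mathcal{H}_2=1-\frac{b}{b+aL}, \qquad L=\log(\tilde u_{n,q}(x)d^{-3/5}).\]
With $a,b<0$ this becomes
\[\mathcal{H}_2=1-\frac{1/3}{1/3+\tfrac23 L}=1-\frac{1}{1+2L},\]
which is increasing in $L$. By Lemma \ref{lem:hathelp}(b), $\tilde u_{n,q}(x)$ is increasing in $x$; the case $n=1$ is direct from \eqref{def:tildeu}. Multiplying by the constant $d^{-3/5}$ preserves this, so $L$ is increasing. Positivity holds since $\tilde u>e$ by Lemma \ref{lem:hathelp}(c).

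\textbf{Factor $\mathcal{H}_3$.} We have $\mathcal{H}_3=1-q(x)/(n\tilde u_{n,q}(x)d^{-3/5})$. Since $\tilde u_{n,q}$ depends on $n$ but not on $q$, the denominator $n\tilde u_{n,q}=n^{2/5}\cdot\mathrm{const}\cdot\log(x)^{3/5}\log\log(x)^{-1/5}$. We must check that this is increasing even though $n(x)=1-p_1(\sigma(x))$ increases toward $1$; it increases, so no obstruction arises. Alternatively, use Lemma \ref{lem:hathelp}(b) together with the fact that $n(x)$ is increasing, by \eqref{p1primeBound} and $\sigma$ increasing. The numerator $q(x)$ is decreasing and nonnegative. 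So the subtracted fraction is decreasing, giving $\mathcal{H}_3$ increasing. Positivity follows from Lemma \ref{lem:hathelp}(c), because $d^{-3/5}\ge1$.

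The main obstacle is $\mathcal{H}_1$. It is where the $x$-dependence of $n(x)$ through $\sigma(x)$ and the $\log\log\log$ term interact, and it is why Lemma \ref{lem:hathelp}(a) and the bounds \eqref{p1Bound}, \eqref{p1primeBound} were prepared. The one genuine numerical check remaining is the sign of $\log d-\kappa_1$ over $d\in[0.1,1]$. We need $\log(0.1)>\kappa_1=\log((5/3)^{1/3}\cdot\tfrac23\cdot\tfrac1{53.989})$, and indeed $\kappa_1\approx\log(0.0146)\approx-4.23<-2.30$.
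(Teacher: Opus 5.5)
Your proposal follows the paper's proof: factor $\mathcal{H}(x,d)=\mathcal{H}_1\mathcal{H}_2\mathcal{H}_3$, then use Lemma~\ref{lem:hathelp}(a) for $\mathcal{H}_1$ and Lemma~\ref{lem:hathelp}(b) for $\mathcal{H}_2$ and $\mathcal{H}_3$. You are more explicit than the paper's one-line argument on three points, and those parts are correct: the constant piece $(\log d-\kappa_1)/\log\log(x)$, positivity of the factors, and the case $n=1$. One small correction: the base of the cube root is about $0.46$ at $d=1$, $\log(x)=10^4$, so ``close to $1$'' is inaccurate. Positivity still follows from that single evaluation, because the base decreases in $d$ and you have shown it increases in $x$.

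The step that is not justified is the monotonicity of $n(x)=1-p_1(\sigma(x))$ inside $\mathcal{H}_3$.
\begin{itemize}
\item Lemma~\ref{lem:hathelp}(b) controls $\tilde u_{n,q}=n^{-3/5}\tilde u_{1,q}$, but $\mathcal{H}_3$ involves $n\tilde u_{n,q}=n^{2/5}\tilde u_{1,q}$. So you need a lower bound on $n'$.
\item The danger would be $n$ decreasing, so your ``even though $n$ increases'' has the direction reversed.
\item \eqref{p1primeBound} is an upper bound on $n'$ and cannot supply this. ``$\sigma$ increasing'' is proved nowhere in the paper.
\item The paper's own proof passes over the same point. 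So does its use of $q_1(\sigma(x))=5-2\sigma(x)$ as a decreasing $q(x)$.
\end{itemize}

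The fact is true and quick to prove. Put $X=\log x$, $L=\log X$ and $\tilde u=\tilde u_{1,0}(x)$. Then $1-\sigma=A/X$ with $A=\tilde u+Xv(\tilde u)-4\log\tilde u$, and
\[ X\frac{dA}{dX}-A=X\frac{d\tilde u}{dX}\Big(1+Xv'(\tilde u)-\frac{4}{\tilde u}\Big)-\big(\tilde u-4\log\tilde u\big). \]
Both terms are negative, for three reasons.
\begin{itemize}
\item $X\,d\tilde u/dX=\tilde u\big(\tfrac35-\tfrac1{5L}\big)>0$.
\item A direct computation from \eqref{def:tildeu} gives $Xv'(\tilde u)=-\big(\tfrac{3L}{5\log\tilde u}\big)^{1/3}\big(1+\tfrac{1}{2\log\tilde u}\big)<-1$. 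This holds because $\tilde u<X^{3/5}$, i.e.\ $5\log\tilde u<3L$.
\item For $X\ge 10^4$ we have $\tilde u\ge 12.7>4\log\tilde u$.
\end{itemize}
Hence $1-\sigma$ is strictly decreasing, so $n=1-\tfrac83(1-\sigma)$ is strictly increasing, and your argument for $\mathcal{H}_3$ goes through.
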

\begin{proof}
We note that
\begin{align*}
\mathcal{H}(x,d)&:= \left( 1+ \frac{\log\left(\frac{2000}{161967} \, \left(\frac{5}{3}\right)^{\frac{1}{3}}\right)}{\log\log(x)}-\frac{\log(n(x))}{\log\log(x)} -\frac{\log\log\log(x)}{3\log\log(x)}-\frac{\log(d)}{\log\log(x)}\right)^{1/3} \\ &\times\left( 1+\frac{1}{2\log(\tilde{u}_{n(x),q(x)}(x)d^{-1/(1-a)})} \right)^{-1} \\ &\times\left( 1 - \frac{q(x)}{n(x)\tilde{u}_{n(x),q(x)}(x)d^{-1/(1-a)}} \right).
\end{align*}
and apply Lemma \ref{lem:hathelp}: (a) in the first row and (b) in the second and third row.
\end{proof}

\begin{lem}\label{uhat-increase}
Assume that $a=-2/3$, $b=-1/3$, $c=1/53.989$, either $n(x)=1-p_1(\sigma(x))$ or $n(x)=1$ and $q=q(x)$ where $q(x)$ is decreasing in $x$ and $0 \leq q(x) \leq 4$. Then we have that $\hat{u}_{n(x),q(x)}(x)$ is increasing and $\hat{u}_{n(x),q(x)}(x) < 1$ for all $\log(x) \geq 10000$.
\end{lem}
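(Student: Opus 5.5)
The plan has two parts: the bound $\hat{u}_{n(x),q(x)}(x)<1$, which comes straight from Proposition \ref{prop:Bounds}, and monotonicity, which comes from a fixed-point comparison using Lemma \ref{H-increase}.

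\textbf{The bound $\hat{u}<1$.} We apply Proposition \ref{prop:Bounds} with $b=-1/3<0$, $a=-2/3$ and $c=1/53.989$. We need to check three hypotheses for $\log(x)\geq 10000$.
\begin{enumerate}
\item The condition $\tilde{u}_{n,q}(x)>\max\{e,q/n\}$ is Lemma \ref{lem:hathelp}(c).
\item The condition $\log(n^{-1}(1-a)^{-b}(-a)c)+b\log\log\log(x)<0$ holds because $c$ is small. Even with $n(x)=1-p_1(\sigma(x))$, the bound \eqref{p1Bound} shows that $n^{-1}$ is at most $1+10^{-2}$. So the constant $\log\big(\tfrac{2000}{161967}(5/3)^{1/3}\big)\approx -4.2$ dominates, and $-\tfrac13\log\log\log(x)$ is also negative.
\item The condition $\log\log(x) > -\log(\cdots)-b\log\log\log(x)$ reads roughly $9.2>4.2+\tfrac13\log(9.2)$, which is true. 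Since the left side grows faster than the right, it persists.
\end{enumerate}
Proposition \ref{prop:Bounds} then gives $\mathcal{H}(x,1)<\hat{u}_{n,q}(x)<1$. Numerically $\mathcal{H}(x,1)$ is around $0.6$ at $\log x=10000$, so in particular $\hat{u}\geq 0.1$ in this range. This lower bound is needed below.

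\textbf{Monotonicity.} Here we use the fixed-point characterisation from Remark \ref{Rem:H}: $\hat{u}(x)$ is the unique solution of $\mathcal{H}(x,u)=u$ in the admissible range. For $b<0$, the proof of Proposition \ref{prop:iteration2} shows that $\mathcal{H}(x,u)$ is increasing in $u$. More directly, in the $a,b$ specialisation written out in Lemma \ref{H-increase}, each of the three factors increases in $u$ when $b<0$: the first has $-\log(u)$ raised to the power $1/3$ with a plus sign in front, the second and third involve $\log(\tilde{u}u^{-3/5})$ and $\tilde{u}u^{-3/5}$, which decrease in $u$. Also, $\mathcal{H}(x,u)-u\to$ crosses zero exactly once, going from positive to negative.

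The key step is to show that $\mathcal{H}(x,u)-u$ changes sign only at $\hat{u}$, and is positive to its left. At $u=0.1$ we have $\mathcal{H}(x,0.1)\geq\mathcal{H}(x,1)>0.1$. We also need $\partial_u\mathcal{H}<1$ on $[0.1,1]$, which is the contraction estimate \eqref{closing-in} and holds since $\partial_u\mathcal{H}=O(1/(u\log\log x))$ with $u\geq 0.1$ and $\log\log x\geq 9.2$. Given this, take $x_1<x_2$ and put $d=\hat{u}(x_1)\in[0.1,1)$. Lemma \ref{H-increase} gives
\[
\mathcal{H}(x_2,d)>\mathcal{H}(x_1,d)=d .
\]
Hence $d$ lies to the left of the unique fixed point of $\mathcal{H}(x_2,\cdot)$, that is, $\hat{u}(x_2)>d=\hat{u}(x_1)$.

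\textbf{Main obstacle.} The delicate step is justifying uniqueness and the sign pattern of $\mathcal{H}(x,u)-u$ on $[0.1,1]$ uniformly for $\log(x)\geq 10000$. I would handle it through an explicit derivative bound $\partial_u\mathcal{H}(x,u)\leq \tfrac{1}{3u\log\log x}(1+o(1))+\ldots<1$, obtained by differentiating the three factors explicitly, together with the endpoint values $\mathcal{H}(x,0.1)>0.1$ and $\mathcal{H}(x,1)<1$.
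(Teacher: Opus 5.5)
Your plan works, but one claim in it is backwards, and that claim is what makes your ``main obstacle'' look hard. For these parameters $\mathcal{H}(x,\cdot)$ is \emph{decreasing} in $u$, not increasing:
the first factor contains $-\log(u)/\log\log(x)$ under the positive power $1/3$, so it falls as $u$ grows;
and $\mathcal{H}_2=\bigl(1+1/(2\log(\tilde u_{n,q}(x)u^{-3/5}))\bigr)^{-1}$ and $\mathcal{H}_3=1-q/(n\tilde u_{n,q}(x)u^{-3/5})$ are increasing functions of $\tilde u_{n,q}(x)u^{-3/5}$, which falls as $u$ grows.

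The paper says the same in several places. The proof of Lemma~\ref{hat-lemma} records it via \eqref{H-inc}. Proposition~\ref{prop:iteration2}(ii) encodes it: for $b<0$, $\mathcal{H}$ increases with $\log(\tilde u_{n,q}(x)u^{-1/(1-a)})$, i.e.\ decreases in $u$. It is also what gives $\mathcal{H}(x,1)<\hat u_{n,q}(x)$ in Proposition~\ref{prop:Bounds}. You use it yourself when you write $\mathcal{H}(x,0.1)\geq\mathcal{H}(x,1)$.

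Since the three factors are positive on $[0.1,1]$, the map $u\mapsto\mathcal{H}(x,u)-u$ is strictly decreasing there. So the single sign change (positive to negative) at $\hat u_{n,q}(x)$ is immediate, and no contraction estimate is needed. Your proposed justification of that step would not have stood on its own. First, \eqref{closing-in} is only an ``eventually, for some $K$'' statement and gives nothing explicit for $\log(x)\geq 10000$. Second, the heuristic $\partial_u\mathcal{H}=O(1/(u\log\log x))$ omits the $\mathcal{H}_3$ contribution $\tfrac{3q}{5n\tilde u_{n,q}(x)}u^{-2/5}$, which is not small at $u=0.1$: for $\log(x)=10^4$, $n=1$, $q=4$ one gets $\partial_u\mathcal{H}(x,0.1)\approx-0.87$.

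With that correction your argument is complete, and it genuinely differs from the paper's. The paper also uses the identity $\mathcal{H}(x_1,d)=d=\mathcal{H}(x_2,d)$ against Lemma~\ref{H-increase}, but only to show that $\hat u_{n(x),q(x)}(x)$ never takes the same value twice. It then combines continuity of $x\mapsto\hat u_{n(x),q(x)}(x)$, the bound $\hat u<1$, and the limit $\hat u\to1$ from Proposition~\ref{prop:limit1} with the intermediate value theorem to exclude any decrease. That route needs only that $\hat u(x)$ is \emph{a} fixed point, with no uniqueness in $u$, but it relies on continuity in $x$ and on the asymptotics.

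Your comparison $\mathcal{H}(x_2,d)>\mathcal{H}(x_1,d)=d\Rightarrow\hat u(x_2)>d$ needs the single-crossing property in $u$, which is cheap as shown above. It needs neither continuity in $x$ nor the limit, and it gives strict increase directly for each pair $x_1<x_2$. Since the lemma only assumes $q(x)$ decreasing, not continuous, your argument is if anything the more robust one.

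Two small points on your lower bound. For $q$ near $4$ one has $\mathcal{H}(\exp(10000),1)\approx0.44$ rather than $0.6$. You also need Lemma~\ref{H-increase} to carry $\mathcal{H}(x,1)>0.1$ to all larger $x$. The paper instead gets $\hat u>0.1$ from $\mathcal{H}(x,u)>0.72$ for $u\leq0.1$.
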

\begin{proof}
The condition $\hat{u}_{n(x),q(x)}(x) < 1$ follows from Proposition \ref{prop:Bounds} as we note that all the required conditions hold. We also remark that $\hat{u}_{n(x),q(x)} > 0.1$, since attempt to compute $\mathcal{H}(x,u)$ with $u \leq 0.1$ would give something larger than $0.72$. For the case that $u_{n(x),q(x)}(x) > \max\{q(x)/n(x),e\}$ we remark that since $\tilde{u}_{n(x),q(x)}(x) > 12$ we would, according to \eqref{def:u2}, need at least $\hat{u}_{n(x),q(x)}(x) > 6$ to disprove the bound, but as in the proof of Proposition \ref{prop:Bounds} was shown, attempt to compute $\mathcal{H}(x,\hat{u}_{n(x),q(x)}(x))$ in this case would give something less than $1$, so $\hat{u}_{n(x),q(x)}(x)$ can not be greater or equal to $1$.

The increasing property of $\hat{u}_{n(x),q(x)}(x)$ follows from Lemma \ref{H-increase} as we remark that for any constant $d$ with $0.1 \leq d \leq 1$ we have that $\mathcal{H}(x,d)$ is strictly increasing and that according to Proposition \ref{prop:limit1} and what we just showed, $\hat{u}_{n(x),q(x)}(x)$ has to be increasing somewhere. Since $\hat{u}_{n(x),q(x)}(x)$ is continuous, if $\hat{u}_{n(x),q(x)}(x)$ is somewhere decreasing before it will increase there exists constants $d_1$, $x_1$ and $x_2$ with $0.1 < d_1 < 1$ and $x_1 < x_2$ so that $d_1 = \hat{u}_{n(x_1),q(x_1)}(x_1)=\hat{u}_{n(x_2),q(x_2)}(x_2)$. However, this would according to \eqref{def:hatu} also imply that $\mathcal{H}(x_1,d_1) = \mathcal{H}(x_2,d_1)$, which would contradict Lemma \ref{H-increase}.
\end{proof}

\begin{lem}\label{hat-lemma}
Assume that $a=-2/3$, $b=-1/3$, $c=1/53.989$, either $n(x)=1-p_1(\sigma(x))$ or $n(x)=1$ and $q=q(x)$ where $q(x)$ is decreasing in $x$ and $0 \leq q(x) \leq 4$. Let $I \subseteq [\exp(10000),\infty)$ be an interval and let us assume that for constants $w_L$ and $w_U$ we have that $0.1 \leq w_L < \hat{u}_{n(x),q(x)}(x) < w_U \leq 1$ holds for all $x \in I$. Then
\begin{align*}
 \mathcal{H}(x,w_{U}) &< \hat{u}_{n(x),q(x)}(x) < \mathcal{H}(x,w_{L})
\end{align*}
holds for all $x \in I$.
\end{lem}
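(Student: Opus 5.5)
The plan is to reduce the statement to monotonicity of $u\mapsto\mathcal{H}(x,u)$ on $[0.1,1]$ for each fixed $x\in I$, together with the fixed-point identity $\mathcal{H}(x,\hat{u}_{n(x),q(x)}(x))=\hat{u}_{n(x),q(x)}(x)$ from Remark \ref{Rem:H}. Since $b=-1/3<0$, the claim is the case (ii) of Proposition \ref{prop:iteration2}. That proposition is only asserted ``eventually'', however, so I would redo its monotonicity step explicitly on the range $\log(x)\geq 10000$, $u\in[0.1,1]$. I do not expect to need the contraction part of Proposition \ref{prop:iteration2}, only monotonicity.

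\textbf{Step 1: $\mathcal{H}(x,\cdot)$ is strictly decreasing in $u$.} I would verify this factor by factor.

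\begin{itemize}
\item $\mathcal{H}_1(x,u)$ has exponent $-b=1/3>0$, and its base decreases in $u$ through the term $-\log(u)/\log\log(x)$. The base stays positive for $u\in[0.1,1]$ and $\log(x)\ge 10000$, by the bounds already used in Proposition \ref{prop:Bounds} and Lemma \ref{uhat-increase}. So $\mathcal{H}_1$ is strictly decreasing in $u$.
\item For $\mathcal{H}_2$, set $L(u)=\log(\tilde{u}_{n,q}(x)u^{-3/5})$, which is decreasing in $u$. Here $\mathcal{H}_2=(1+\tfrac{1}{2L})^{-1}$ is increasing in $L$, hence decreasing in $u$. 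Positivity of $L$ follows from $\tilde{u}_{n,q}(x)>e$ (Lemma \ref{lem:hathelp}(c)) together with $u\le1$.
\item Likewise $\mathcal{H}_3=1-q/(n\tilde{u}_{n,q}(x)u^{-3/5})$ is non-increasing in $u$ (strictly when $q>0$). It is positive, since $\tilde{u}_{n,q}(x)>q/n$ by Lemma \ref{lem:hathelp}(c) and $u^{-3/5}\ge1$.
\end{itemize}

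A product of positive functions, one strictly decreasing and the others non-increasing, is strictly decreasing. This gives Step 1.

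\textbf{Step 2: conclude.} For $x\in I$ write $h=\hat{u}_{n(x),q(x)}(x)$, so that $0.1\le w_L<h<w_U\le 1$. Step 1 gives
\[
\mathcal{H}(x,w_U)<\mathcal{H}(x,h)<\mathcal{H}(x,w_L).
\]
Combining this with $\mathcal{H}(x,h)=h$ yields the claim. The fixed-point identity holds because \eqref{def:hatu} is precisely $h=\mathcal{H}(x,h)$.

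\textbf{Main obstacle.} The only delicate point is making sure all three factors are well defined and positive uniformly over $u\in[0.1,1]$ and all $x\ge e^{10000}$, with $n(x)$ either $1$ or $1-p_1(\sigma(x))$ and $0\le q\le4$. For $\mathcal{H}_1$ this requires
\[
1+\frac{\log(\text{const}/n)-\tfrac13\log\log\log x-\log u}{\log\log x}>0.
\]
This holds because $\log\log x\ge\log 10000\approx 9.2$, $-\log u\ge0$, and $n\le1$ makes $-\log n\ge 0$. The term $\tfrac13\log\log\log x$ grows much more slowly than $\log\log x$, so it causes no trouble. I would use \eqref{p1Bound} to control $n(x)$.
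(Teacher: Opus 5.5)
Your proposal is correct and is essentially the paper's proof: strict decrease of $u\mapsto\mathcal{H}(x,u)$ on $[0.1,1]$ for each fixed $x$, combined with the fixed-point identity $\mathcal{H}(x,\hat{u}_{n(x),q(x)}(x))=\hat{u}_{n(x),q(x)}(x)$ from \eqref{def:hatu}. The paper gets the monotonicity in one step from the closed form \eqref{H-inc}, which is increasing in $y(x,u)$, and $y(x,u)$ decreases in $u$; you check the factors one by one instead. The paper also adds a uniformity-in-$x$ paragraph using Lemmas \ref{uhat-increase} and \ref{H-increase}, which your pointwise argument correctly shows is not needed for the statement as given.
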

\begin{proof}
First we define
\begin{align*}
y(x,u) &:= \mathcal{H}_{1}(x,u)^{-1/b} \\ &= 1+ \frac{10 \, \log\left(5\right) - 4 \, \log\left(3\right) + 3 \, \log\left(\frac{16}{53989}\right) - 3 \, \log\left(n(x)\right) - 3 \, \log\left(u\right) - \log\left(\log\left(\log\left(x\right)\right)\right)}{3 \, \log\left(\log\left(x\right)\right)}.
\end{align*}
We then observe that with the given parameters we have
\begin{align}
\mathcal{H}(x,u) = \frac{6\log\log(x)y(x,u)^{4/3}}{5+6\log\log(x)y(x,u)}\bigg(1-\frac{q(x)}{n(x)}\exp\bigg(\frac{-3\log\log(x)y(x,u)}{5}\bigg) \bigg). \label{H-inc}
\end{align}
Now, for fixed $x$, we observe that $y(x,u)$ is decreasing in $u$, which implies that $\mathcal{H}(x,u)$ is also decreasing in $u$. By definition we have that $\mathcal{H}(x,\hat{u}_{n(x),q(x)}(x))=\hat{u}_{n(x),q(x)}(x)$, which implies that for fixed $x$ we have $\mathcal{H}(x,w_{U}) < \hat{u}_{n(x),q(x)}(x) < \mathcal{H}(x,w_{L})$. In order to extend this uniformly for all $x \in I$ we recall that $\hat{u}_{n(x),q(x)}(x)$ is continuous and according to Lemma \ref{uhat-increase} strictly increasing and according to Lemma \ref{H-increase} the same also applies for both $\mathcal{H}(x,w_{L})$ and $\mathcal{H}(x,w_{U})$. Hence, according to \eqref{def:hatu} the only possible point where $\mathcal{H}(x,w_{U}) = \hat{u}_{n(x),q(x)}(x)$ is at the point of $x$ where $\hat{u}_{n(x),q(x)}(x) = w_U$, and according to our assumption, such $x$ does not exist in $I$. Same argument also applies for $w_L$ and the crossing point of $\hat{u}_{n(x),q(x)}(x)$ and $\mathcal{H}(x,w_L)$.
\end{proof}
\begin{Rem}\label{rem:hat-appro}
Assume that $a=-2/3$, $b=-1/3$, $c=1/53.989$, either $n(x)=1-p_1(\sigma(x))$ or $n(x)=1$ and $q=q(x)$ where $q(x)$ is decreasing in $x$ and $0 \leq q(x) \leq 4$. Let $I \subseteq [\exp(10000),\infty)$ be a finite closed interval $[x_l,x_u]$ and $J = [x_l',\infty)$ with $x_l' \geq \exp(10000)$ be an interval without an upper bound. We choose $j \in \N$ to describe the number of iterations. For $x \in I$ we approximate $\hat{u}_{n(x),q(x)}(x)$ in the following way. For every $k =1,2,3,\dots,j$ we compute $\mathcal{H}^{k}(x_l,1)$ and $\mathcal{H}^{k}(x_u,1)$ (see notation in Proposition \ref{Prop:iteration}) and observe that if $k$ is odd, then the obtained number is a lower bound of $\hat{u}_{n(x),q(x)}$ at the given point of $x$ and if $k$ is even, then the bound is an upper bound respectively (Lemmas \ref{uhat-increase} and \ref{hat-lemma}).

We assume that in the spirit of Proposition \ref{prop:iteration2} every iteration will improve the bound. This would imply that the last bound obtained is better than any of the previous bounds, but for the sake of this application, any computed bound is in any case valid, so we do not include the proof of this.

Now let $w_L$ be the lower bound constant of $\hat{u}_{n(x_l),q(x_l)}(x_l)$ and $w_U$ be the upper bound constant of $\hat{u}_{n(x_u),q(x_u)}(x_u)$. We will then declare that for all $x \in I$ we have that
\begin{align*}
\mathcal{H}(x,w_{U}) &< \hat{u}_{n(x),q(x)}(x) < \mathcal{H}(x,w_{L}).
\end{align*}
For $x \in J$ the argument goes similarly for the lower bound $w_L'$ at $x=x_l'$, but we use $1$ as the upper bound constant and declare that for all $x \in J$ we have that
\begin{align*}
\mathcal{H}(x,1) &< \hat{u}_{n(x),q(x)}(x) < \mathcal{H}(x,w_{L}')<1.
\end{align*}
\end{Rem}

Now we have everything we need for doing proper approximations. First we remark that with these choices of parameters the main factor
\begin{align}
\exp\left( -m(1-a^{-1})(m^{-1}(1-a)^{-b}(-a)c)^{1/(1-a)} \log(x)^{1/(1-a)}\log\log(x)^{b/(1-a)} \right). \label{MainConstantFormula}
\end{align}
is exactly
\begin{align*}
\exp\bigg(-\frac{5}{2} \, \left(\frac{5}{3}\right)^{\frac{1}{5}} \left(\frac{2000}{161967}\right)^{\frac{3}{5}}\log(x)^{3/5}\log\log(x)^{-1/5}\bigg).
\end{align*}
We thus define
\begin{equation}
    D := \frac{5}{2} \, \left(\frac{5}{3}\right)^{\frac{1}{5}} \left(\frac{2000}{161967}\right)^{\frac{3}{5}} \approx 0.1982767.
    \label{eq:D2}
\end{equation}

We will now move to estimating the bounds of $E_{11}(x), E_{12}(x)$ and $E_{13}(x)$.

\begin{defn}
We define
\begin{align*}
E_{1}(x) := \frac{E_{11}(x)+E_{12}(x)+E_{13}(x)}{\exp(-D\log(x)^{3/5}\log\log(x)^{-1/5})}
\end{align*}
and
\begin{align*}
E_{2}(x) := \frac{E_{21}(x)+E_{22}(x)+E_{23}(x)}{\exp(-D\log(x)^{3/5}\log\log(x)^{-1/5})}.
\end{align*}
\end{defn}

\begin{prop}
With notation as above. For $10000\leq \log(x) \leq 10^{15}$ the functions $E_1(x)$ and $E_2(x)$ can be bounded by a non-increasing step function. 
An example of such bounds is given by Table \ref{tableBoundE1E2}.
\end{prop}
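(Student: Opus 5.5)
The plan is to show that on each subinterval $I_k=[x_k,x_{k+1}]$ of a partition of $[e^{10000},e^{10^{15}}]$, $E_1(x)$ and $E_2(x)$ are bounded above by a single constant computed from endpoint data. The constants will then be monotonised by taking running maxima from the right, which gives the non-increasing step function of Table \ref{tableBoundE1E2}.

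\textbf{Reduction to monotone building blocks.} By Propositions \ref{Prop:Ei1}, \ref{Prop:Ei2} and \ref{Prop:Ei3}, each $E_{ij}(x)$ divided by $\exp(-D\log(x)^{3/5}\log\log(x)^{-1/5})$ has the form $\exp(\Xi_{ij}(x))$, possibly multiplied by a factor $\Phi_{4,\eta}$ or $\tilde\Phi_{4,\eta}$ and by $\sqrt{2\pi/((1-p_i)(1-a))}$. Here $\Xi_{ij}$ is an explicit expression in the following quantities:
\begin{itemize}
\item $\tilde u_{1,0}(x)$, $1-p_i(\sigma(x))$ and $q_i(\sigma(x))$;
\item $\hat u_{1,0}(x)$, $\hat u_{1-p_1,q_1}(x)$ and $\hat u_{1-p_1,q_1-1}(x)$;
\item the corresponding $\log u$ values, which come from Proposition \ref{comparisonProp}.
\end{itemize}
Note that $q_1(\sigma(x))=3+2(1-\sigma(x))\le 4$ is decreasing in $x$, because $\sigma$ is increasing. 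For $i=2$ we have $p_2=0$, so $E_{22}=0$. The hypotheses of Lemmas \ref{uhat-increase} and \ref{hat-lemma} therefore hold for every $\hat u$ that appears.

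\textbf{Bounding on each interval.} For each $I_k$ I will apply Remark \ref{rem:hat-appro}. This iterates $\mathcal H$ from $1$ at $x_k$ and at $x_{k+1}$ to get constants $w_L\le \hat u\le w_U$ valid on all of $I_k$, and these are then refined to $\mathcal H(x,w_U)<\hat u<\mathcal H(x,w_L)$. Next I will bound $1-\sigma(x)$, and hence $p_1$ and $q_1$, on $I_k$ by their endpoint values, using Lemma \ref{lem:sigmabounds} and the monotonicity of $\sigma$. Then I will substitute into each $\Xi_{ij}$, choosing for every occurrence of every variable whichever endpoint of its range makes the expression largest. This interval-arithmetic step gives a rigorous upper bound.

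The factors $\Phi_{1},\Phi_2,\Phi_3$ and $\Phi_4$ from Lemma \ref{lemma:Clemma} and Proposition \ref{prop:Cvalue} are handled the same way, since they are explicit in $\log u$ and $\eta$. I also need the condition $T(x)=t_1^{3}\ge t_{1-p,q}^{1+\eta}$. This holds because $u_{1-p,q}/u_{1,0}$ is close to $1$ by Proposition \ref{comparisonProp} and $\eta$ is small, and it is checked at the endpoints.

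\textbf{Main obstacle.} The difficult term is $-(1-a^{-1})\big((1+\dots)\hat u^{-3/5}-1\big)\tilde u_{1,0}(x)$. It multiplies a small correction by the large quantity $\tilde u_{1,0}\asymp \log(x)^{3/5}$, so crude endpoint substitution of $\tilde u_{1,0}$ across a wide interval loses far too much. The bracket is negative because $\hat u<1$, which is the source of the gain. Even so, the $\hat u$ enclosures must be tight enough, and the intervals short enough, that the error in $\tilde u$ times the width of the $\hat u$ enclosure stays $O(1)$.

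I would first use a geometric partition in $\log x$ with a fixed ratio such as $1.01$, together with several $\mathcal H$-iterations. If needed I would refine adaptively wherever the bound on $I_{k+1}$ exceeds the bound on $I_k$. Because the $E$'s tend to decrease, and we take running maxima from the right anyway, the resulting step function is non-increasing by construction. What remains is the finite computation that produces Table \ref{tableBoundE1E2}.
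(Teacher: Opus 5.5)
Your proposal follows the paper's proof: subdivide $[10^4,10^{15}]$ in $\log x$, enclose each $\hat u$ on every subinterval via Remark~\ref{rem:hat-appro} (which rests on Lemmas~\ref{uhat-increase} and~\ref{hat-lemma}), evaluate the bounds of Propositions~\ref{Prop:Ei1}--\ref{Prop:Ei3}, including $\Phi_{4,\eta}$, in interval arithmetic, and read off a non-increasing step function. Two harmless slips that do not affect the interval-arithmetic argument: since $\hat u<1$ gives $\hat u^{-3/5}>1$, the bracket $(1+\dots)\hat u^{-3/5}-1$ is actually positive, and it is the prefactor $-(1-a^{-1})=-5/2$ that makes this exponent negative; and $E_{23}$ also requires $\hat u_{1,1}$, which satisfies the same lemmas.
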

\begin{proof}
This is checked computationally using interval arithmetic, by subdividing $[10^4,10^{15}]$ into sub-intervals.

Let $[x_k,x_{k+1}]$ denote a single subinterval of our chosen division of $[10^4,10^{15}]$, which is a subdivision in the variable $\log(x)$. In order to bound $E_{11}(x), E_{12}(x)$ and $E_{13}(x)$ from above by assuming $\log(x) \in [x_k,x_{k+1}]$ we have to approximate the respective $\hat{u}$ for each of these bounds. We do it as described in Remark \ref{rem:hat-appro} with a note that for $\Phi_{4,\eta}(x)$ evaluations from Proposition \ref{prop:Cvalue} we use lower and upper bound constants of $\hat{u}$. We give an example of how to do the bound with $E_{i1}(x)$:
\begin{align*}
E_{i1}(x) &\leq \exp(-m(1-a^{-1})\tilde{u}_{m,0}(x))\\
 &\quad\times \exp\bigg( -m(1-a^{-1})\bigg(\bigg(1 + \frac{b}{a^2(1-a^{-1})\log(\tilde{u}_{m,0}(x)\mathcal{H}(x,w_{L})^{-1/(1-a)}) +ab(1-a^{-1})} \\&\quad -\frac{p_i(\sigma(x))}{m(1-a^{-1})}\bigg)\left(\frac{1}{\mathcal{H}(x,w_{L})}\right)^{1/(1-a)} -1\bigg)\tilde{u}_{m,0}(x) \\ &\quad +q_i(\sigma(x))\log(\tilde{u}_{m,0}(x)\mathcal{H}(x,w_{U})^{-1/(1-a)})\bigg).
\end{align*} 
We then compute the upper bound for each interval using \texttt{RealIntervalField} and create a table of values that decrease in $x$. More details can be found in the related \texttt{Sage} code. The table of bounds for $E_1(x)$ and $E_2(x)$ is presented in Table \ref{tableBoundE1E2}.

\end{proof}

Next, we will show that given bound in Table \ref{tableBoundE1E2} at $\log(x) = 10^{15}$ is valid for all $\log(x) \geq 10^{15}$. This is done by generating a more trivial bound for which the decreasing property is easier to justify. One can use the obtained upper bound function for all $\log(x) \geq 10^{15}$, but more efficient method is to extended the list in Table \ref{tableBoundE1E2} by the method we generated its values initially.
\begin{lem}\label{hatcomparison}
We have that
\begin{align*}
\left(\frac{1}{(1-p_i(\sigma(x))/m)\hat{u}_{m-p_i(\sigma(x)),q_i(\sigma(x))}(x)}\right)^{1/(1-a)} > \left(\frac{1}{\hat{u}_{m,0}(x)}\right)^{1/(1-a)}.
\end{align*}
\end{lem}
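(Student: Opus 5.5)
The plan is to avoid comparing $\hat{u}_{m-p_i(\sigma(x)),q_i(\sigma(x))}(x)$ and $\hat{u}_{m,0}(x)$ directly through the fixed-point description \eqref{def:hatu}. Instead, multiply both sides of the claimed inequality by $\tilde{u}_{m,0}(x)>0$ and recognise each side as a genuine critical point of some $f_x$. Then the statement becomes an inequality between critical points, which Proposition \ref{prop:fx} already provides.

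First step: rewrite the left-hand side. By Proposition \ref{comparisonProp}, applied with $p(x)=p_i(\sigma(x))$ and $q=q_i(\sigma(x))$,
\[ \left(\frac{1}{(1-p_i(\sigma(x))/m)\,\hat{u}_{m-p_i(\sigma(x)),q_i(\sigma(x))}(x)}\right)^{1/(1-a)}\tilde{u}_{m,0}(x) = u_{m-p_i(\sigma(x)),q_i(\sigma(x))}(x). \]
By \eqref{def:u2} with $n=m$, $q=0$, we have $\left(1/\hat{u}_{m,0}(x)\right)^{1/(1-a)}\tilde{u}_{m,0}(x)=u_{m,0}(x)=u_m(x)$. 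Since $\tilde{u}_{m,0}(x)>0$, the lemma is therefore equivalent to
\[ u_{m-p_i(\sigma(x)),q_i(\sigma(x))}(x) > u_{m,0}(x). \]

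Second step: invoke Proposition \ref{prop:fx}(1). It gives $u_{m-p,q}(x)\ge u_m(x)$, with strict inequality unless $p=q=0$. The strictness is automatic in our setting because $q_1(\sigma)=5-2\sigma>0$ and $q_2=2>0$. For the hypotheses of Proposition \ref{prop:fx} I need the following.
\begin{itemize}
\item $0\le p_i(\sigma(x))<m=1$: this follows from \eqref{p1Bound}, and trivially for $p_2=0$.
\item $q_i\ge0$.
\item $x$ large enough that the critical points exist and \eqref{def:u2} is valid: this holds in the range $\log(x)\ge 10000$ under consideration.
\end{itemize}
Strictly speaking, Proposition \ref{prop:fx} is stated for fixed $p,q$, whereas here $p,q$ depend on $x$. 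However, for each fixed $x$ the argument there is pointwise: it uses only that $f_x'$ is strictly decreasing and that $f_x'(u_m)=p+q/u_m>0$. So it applies verbatim with $p=p_i(\sigma(x))$ and $q=q_i(\sigma(x))$ frozen at that $x$.

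The only real obstacle is this bookkeeping: confirming that the $\hat{u}$'s appearing in the lemma are exactly the ones defined through \eqref{def:u2}, i.e. the unique solutions of the defining equation, rather than some other fixed point of $\mathcal{H}$. For this I would cite Remark \ref{Rem:H}, which gives uniqueness of the fixed point for $x>x_0$, together with Lemma \ref{uhat-increase}, which gives $0.1<\hat{u}<1$ and hence positivity and the applicability of \eqref{def:u2}. With that in place the proof is the two-line reduction above.
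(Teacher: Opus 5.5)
Your proposal is correct and matches the paper's own argument. The paper's proof only says the inequality follows directly from Propositions \ref{prop:fx} and \ref{comparisonProp}; your reduction (multiply by $\tilde{u}_{m,0}(x)>0$ to get $u_{m-p_i(\sigma(x)),q_i(\sigma(x))}(x)>u_{m,0}(x)$, then apply Proposition \ref{prop:fx}(1), strict because $q_i>0$) is that argument written out, and your extra checks ($p_i<m$, freezing $p,q$ at each fixed $x$, and identifying $\hat{u}$ via \eqref{def:u2}) make explicit what the paper leaves implicit.
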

\begin{proof}
This follows directly from  Propositions \ref{prop:fx} and \ref{comparisonProp}.
\end{proof}
\begin{defn}
We define
\begin{align*}
G(x) &:= \exp\bigg( -\frac{5}{2}\bigg(\left(1 - \frac{3}{10\log(\tilde{u}_{1,0}(x)) +5}-p_1(\sigma(x))\right)\\&\quad\times\left(\frac{1}{\hat{u}_{1,0}(x)}\right)^{1/(1-a)}-1\bigg)\tilde{u}_{1,0}(x)+ \frac{3}{2}q_1(\sigma(x))\log(\tilde{u}_{1,0}(x))\bigg).
\end{align*}
\end{defn}
\begin{Rem}\label{rem:constantonG}
We note that by Proposition \ref{comparisonProp} and Lemma \ref{hatcomparison} and the known parameters we have that both components $E_{1}(x)$ and $E_{2}(x)$ have an upper bound that can be expressed as $C_{i,x_0}G(x)$ for all $x \geq x_0$ where $C_{i,x_0} = \frac{E_{i}(x_0)}{G(x_0)}$ is a unique constant for $E_{i}(x)$ as the $G(x)$ was defined in a way that it will bound all the increasing parts of the given components from above.
\end{Rem}
Next, we will construct a way to estimate the structure of $G(x)$.
\begin{Rem}\label{RemBase2}
Recalling Propositions \ref{prop:iteration2}, \ref{Prop:iteration}, and \ref{prop:Bounds} and Lemma \ref{hat-lemma} we note that $\hat{u}_{1,0}(x) \geq \mathcal{H}^{5}(\exp(10^{15}),1) \geq \frac{14367722903487}{15625000000000} =0.919534265823168 $ for all $x \geq \exp(10^{15})$ and 
\begin{align*}
\mathcal{H}_1\bigg(x,\frac{14367722903487}{15625000000000}\bigg) &= \left(1+\mathcal{A}_1(x)\right)^{1/3}, \\
\mathcal{H}_2\bigg(x,\frac{14367722903487}{15625000000000}\bigg) &= \left(1+\mathcal{A}_2(x)\right)^{-1},
\end{align*}
where
\begin{align*} \mathcal{A}_1(x) = -\frac{3 \, \log\left(\frac{14367722903487}{15625000000000}\right) - 3 \, \log\left(\frac{2000}{161967} \, \left(\frac{5}{3}\right)^{\frac{1}{3}}\right) + \log\left(\log\left(\log\left(x\right)\right)\right)}{3 \, \log\left(\log\left(x\right)\right)} \end{align*}
and
\begin{align*}
\mathcal{A}_2(x) = \frac{1}{2 \, \log\left(\frac{250000000 \, \left(\frac{14367722903487}{16}\right)^{\frac{2}{5}} \left(\frac{5}{3}\right)^{\frac{1}{5}} \left(\frac{2000}{161967}\right)^{\frac{3}{5}} \log\left(x\right)^{\frac{3}{5}}}{14367722903487 \, \log\left(\log\left(x\right)\right)^{\frac{1}{5}}}\right)}
\end{align*}
\end{Rem}
\begin{defn}\label{DefH1}
We define
\begin{align*}
H_1(x) = S_1(x)S_2(x), 
\end{align*}
where by setting $\mathcal{B}_1 = \frac{b}{1-a}=-1/5$ we have
\begin{align*}
S_1(x) &= 1+\mathcal{B}_1\mathcal{A}_1(x)+\frac{\mathcal{B}_1(\mathcal{B}_1-1)}{2!}\mathcal{A}_1(x)^2 \\&\quad+\frac{\mathcal{B}_1(\mathcal{B}_1-1)(\mathcal{B}_1-2)}{3!}\mathcal{A}_1(x)^3+\frac{\mathcal{B}_1(\mathcal{B}_1-1)(\mathcal{B}_1-2)(\mathcal{B}_1-3)}{4!}\mathcal{A}_1(x)^4,
\end{align*}
and by setting $\mathcal{B}_2 = \frac{1}{1-a}=3/5$ we have
\begin{align*}
S_2(x) = 1+\mathcal{B}_2\mathcal{A}_2(x)+\frac{\mathcal{B}_2(\mathcal{B}_2-1)}{2!}\mathcal{A}_2(x)^2.
\end{align*}
\end{defn}
\begin{Rem}\label{RemBase}
According to Remark \ref{RemBase2} and the known series expansion in Definition \ref{DefH1} we have that
\begin{align*}
H_1(x) \leq \left(\frac{1}{\hat{u}_{1,0}(x)} \right)^{3/5}
\end{align*}
for all $\log(x) \geq 10^{15}$.
\end{Rem}
\begin{Rem}\label{defbase}
Let
\begin{align*}
\tilde{G}(x) := -\frac{5}{2}\bigg(\left(1 - \frac{3}{10\log(\tilde{u}_{1,0}(x)) +5}-p_1(\sigma(x))\right)H_1(x)-1\bigg)\tilde{u}_{1,0}(x)+ \frac{3}{2}q_1(\sigma(x))\log(\tilde{u}_{1,0}(x)),
\end{align*}
which according to Remark \ref{RemBase} implies that
\begin{align*}
G(x) \leq \exp(\tilde{G}(x)).
\end{align*}
\end{Rem}
\begin{Rem}
We have that
\begin{align*}
\lim_{x\to\infty} \frac{\tilde{G}(x)}{\frac{\log\left(x\right)^{\frac{3}{5}} \log\left(\log\left(\log\left(x\right)\right)\right)}{\log\left(\log\left(x\right)\right)^{\frac{6}{5}}}} = -\frac{50}{485901} \cdot 4153^{\frac{2}{5}} 13^{\frac{2}{5}} 3^{\frac{1}{5}} 2^{\frac{2}{5}} = -\frac{D}{15}.
\end{align*}
\end{Rem}
\begin{prop}\label{G1decrease}
We have that for all $\log(x) \geq 10^{15}$
\begin{align}
\tilde{G}(x) + \frac{D\log\left(x\right)^{\frac{3}{5}} \log\left(\log\left(\log\left(x\right)\right)\right)}{15\log\left(\log\left(x\right)\right)^{\frac{6}{5}}} \label{decrease-expr}
\end{align}
is always negative and decreasing.
\end{prop}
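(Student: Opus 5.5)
We will prove the statement by rewriting \eqref{decrease-expr} as an explicit function of the single variable $y=\log\log(x)$ with $\log(x)=e^{y}$, and then controlling its leading terms. Recall that $\tilde{u}_{1,0}(x)=\kappa\log(x)^{3/5}\log\log(x)^{-1/5}$, where $\kappa=\tfrac{2}{5}D$. By Lemma \ref{lem:sigmabounds}, $p_1(\sigma(x))=\tfrac83(1-\sigma(x))=O(\log(x)^{-2/5}\log\log(x)^{-1/5})$. Hence the product $p_1(\sigma(x))\tilde{u}_{1,0}(x)$ is $O(\log(x)^{1/5})$, which is negligible against $\log(x)^{3/5}$. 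Also $q_1(\sigma(x))=3+2(1-\sigma(x))\le 4$, so the term $\tfrac32 q_1\log(\tilde u_{1,0})$ is $O(\log\log(x))$.

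The main term comes from $\bigl(1-\frac{3}{10\log\tilde u_{1,0}+5}\bigr)H_1(x)-1$. Expand $H_1=S_1S_2$ with $\mathcal{A}_1(x)=-\frac{\log\log\log(x)}{3\log\log(x)}+\frac{C_1}{\log\log(x)}$ and $\mathcal{A}_2(x)=\frac{1}{2\log\tilde u_{1,0}(x)+C_2}$. The leading contribution is then $\mathcal{B}_1\mathcal{A}_1\approx \frac{\log\log\log(x)}{15\log\log(x)}$. Multiplying by $-\tfrac52\tilde u_{1,0}$ gives exactly $-\frac{D}{15}\frac{\log(x)^{3/5}\log\log\log(x)}{\log\log(x)^{6/5}}$, which cancels the added term in \eqref{decrease-expr}.

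The remaining pieces have explicit signs. The constant parts of $\mathcal{A}_1$ and the $\mathcal{A}_2$ term give a contribution $\frac{\log(x)^{3/5}}{\log\log(x)^{6/5}}$ times the constant $-\tfrac52\kappa\bigl(\mathcal{B}_1 C_1'+\tfrac{3}{5}\cdot\tfrac{5}{3}\cdot(\text{coefficient})-\tfrac{3}{10}\cdot\tfrac53\bigr)$. We will compute this constant explicitly and check that it is negative. The second-order terms in $\mathcal{A}_1^2$, $\mathcal{A}_1^3$, $\mathcal{A}_1^4$, $\mathcal{A}_2^2$ and the cross terms are of order $\log(x)^{3/5}(\log\log\log x)^2/\log\log(x)^{11/5}$, and we will bound them by explicit multiples of that quantity.

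Concretely, we will write
\[
\text{\eqref{decrease-expr}}=\frac{e^{3y/5}}{y^{6/5}}\Bigl(-K_0+R(y)\Bigr)+O\bigl(e^{y/5}\bigr)+O(y),
\]
where $K_0>0$ is an explicit constant and $R(y)$ is an explicit rational expression in $y$ and $\log y$ with $|R(y)|\le K_1(\log y)^2/y$. Since $y\ge\log(10^{15})\approx 34.5$, the quantity $(\log y)^2/y$ is about $0.36$. This might not be small enough for crude bounds, and this is the step we expect to be the main obstacle. If the crude bound fails, we will keep all terms exactly rather than bounding them by absolute values, since the higher-order binomial coefficients alternate in sign appropriately. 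Negativity then follows from $-K_0+R(y)<0$ together with explicit bounds on the lower-order terms.

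For the decreasing property, we will differentiate in $y$. The derivative is $\frac{e^{3y/5}}{y^{6/5}}\bigl(\tfrac35-\tfrac{6}{5y}\bigr)(-K_0+R(y))+\frac{e^{3y/5}}{y^{6/5}}R'(y)$, plus the derivatives of the small terms. For the $p_1$ and $q_1$ contributions, we will use \eqref{sigmaprimeBound} and \eqref{p1primeBound} to bound their derivatives by $O(e^{y/5})$. Since $R'(y)=O((\log y)^2/y^2)$, the first term dominates. This leaves a one-variable inequality on $[\log 10^{15},\infty)$. We will verify it by showing that the bracket is eventually monotone, and check any compact initial range with interval arithmetic as in the preceding proposition.
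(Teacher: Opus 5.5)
Your plan works, but it takes a genuinely different route from the paper's. The paper argues by brute force. It differentiates the full expression symbolically in Sage, takes the numerator of the derivative (4454 terms) and sorts the terms by growth type in $\log x$, $\log\log x$, $\log\log\log x$. It then shows that, after some cancellations, a single negative term beats all positive terms once $\log x\geq 10^{10000}$. The range $10^{15}\leq\log x\leq 10^{10000}$ is covered by a subdivision on whose pieces the negative terms are decreasing and the positive ones increasing. Negativity then follows from the value at $\log x=10^{15}$ plus monotonicity.

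You instead pass to $y=\log\log x$ and factor out the natural scale $e^{3y/5}y^{-6/5}$, which makes the margin visible at once. The constant you left to be computed has the right sign. The order-$1/y$ parts of $\tfrac35\mathcal{A}_2$ and of $\tfrac{3}{10\log\tilde{u}_{1,0}+5}$ cancel; their difference is positive and $O(y^{-2})$. Hence $K_0=\tfrac{D}{5}|C_1|$ with $C_1=\log\bigl(\tfrac{2000}{161967}(5/3)^{1/3}\bigr)-\log\bigl(\tfrac{14367722903487}{15625000000000}\bigr)\approx-4.14$, i.e. $K_0\approx0.164$.

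Moreover $\mathcal{A}_1<0$ while $\binom{-1/5}{k}$ has sign $(-1)^k$. So every nonconstant term of $S_1$ is positive and only helps, and the loss you feared from $(\log y)^2/y\approx0.36$ does not occur. The only pieces that can go the wrong way are the $\mathcal{A}_2^2$ term of $S_2$, the products with $\tfrac{3}{10\log\tilde{u}_{1,0}+5}$, and the $p_1,q_1$ terms. These are of lower order (at most $O(y^{-2}\log y)$ in the bracket, against the main term $|C_1|/(5y)$) or exponentially small. The normalized cofactor is already about $-0.19$ at $y=\log 10^{15}$.

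Your route buys a transparent reason for the sign: the smallness of the zero-free-region constant $c$ makes $C_1$ negative. It also reduces everything to a few explicit one-variable inequalities valid from $y=\log10^{15}$ on, with no subdivision reaching $\log x=10^{10000}$. The paper's route needs no insight into the cancellations, but it is essentially impossible to check by hand.

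One correction in the monotonicity step. Both \eqref{sigmaprimeBound} and \eqref{p1primeBound} only bound $\sigma'$ from above, i.e. they bound $\frac{d}{dx}p_1(\sigma(x))$ from below. To control from above the derivative of the positive terms $\tfrac52p_1H_1\tilde{u}_{1,0}+\tfrac32q_1\log\tilde{u}_{1,0}$ (note $q_1=3+\tfrac34p_1$), you need an upper bound on $\frac{d}{dx}p_1(\sigma(x))$. Get it from the explicit formula $1-\sigma(x)=\tilde{u}_{1,0}(x)/\log x+v(\tilde{u}_{1,0}(x))-4\log(\tilde{u}_{1,0}(x))/\log x$. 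Only the last term has positive derivative, and for $\log x\geq10^{15}$ that derivative is negligible against the derivative of $\tilde{u}_{1,0}/\log x$. So $\sigma$ is increasing, and the terms involving $p_1'$ and $q_1'$ are nonpositive. What remains is $p_1\,(H_1\tilde{u}_{1,0})'=O(e^{y/5})$ and $q_1(\log\tilde{u}_{1,0})'=O(1)$, as you intended.
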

\begin{proof}
The strategy to prove the decreasing property by assuming $\log(x) \ge 10^{15}$ is the following. First we check that the negativity holds at $\log(x) = 10^{15}$. Let $g(x)$ be the derivative of \eqref{decrease-expr}. In \texttt{Sage} by taking the numerator of $g(x)$ we first observe that the sign of the numerator coincides with the sign of the derivative and that all the 4454 terms of the numerator consist of the following types of possible factors:
\begin{enumerate}
\item constant factor;
\item $\log(x)^{A_1}$ with $A_1 \in [1,13/5]$ or $A_1=0$;
\item $\log\log(x)^{B_1}$ with $B_1 \in [17/5, 134/15]$ or $B_1=0$;
\item $\bigg(2 \, \log\left(14367722903487\right) - 3 \, \log\left(161967\right) + 3 \, \log\left(2000\right) + 46 \, \log\left(5\right) - 11 \, \log\left(3\right) + 12 \, \log\left(2\right) + 5 \, \log\left(\frac{8}{1596413655943}\right) + 3 \, \log\left(\log\left(x\right)\right) - \log\left(\log\left(\log\left(x\right)\right)\right)\bigg)^{B_2}$ with $B_2 \in [1, 3]$ or $B_2=0$;
\item $\bigg(-\frac{3}{5} \, \log\left(161967\right) + \frac{3}{5} \, \log\left(2000\right) + \frac{1}{5} \, \log\left(5\right) - \frac{1}{5} \, \log\left(3\right) + \frac{3}{5} \, \log\left(\log\left(x\right)\right) - \frac{1}{5} \, \log\left(\log\left(\log\left(x\right)\right)\right) \bigg)^{B_3}$ with $B_3 \in [1/3, 17/3]$ or $B_3=0$; and
\item $\log\log\log(x)^{C_1}$ with $C_1 \in [1, 4]$ or $C_1=0$.
\end{enumerate}
This implies that every term can be asymptotically expressed as 
\begin{align*} O\bigg(\log(x)^{A_1}\log\log(x)^{B_1+B_2+B_3}\log\log\log(x)^{C_1}\bigg). \end{align*}
For $\log(x) \geq 10^{15}$ terms in any of the mentioned categories can not change signs as $x$ grows. First we recognize that on top of the asymptotically ordered list there appears to be 4 terms that can be combined into 2 terms that will due to cancellations push them down in the list. It turns out that the surviving asymptotically dominant factor gives negative contribution (the value is always negative) and it has the following characteristic values $A_1 = 13/5, B_1+B_2+B_3 = 214/15$ and $C_1 = 0$. This term  also has a positively contributing counterpart with the same characteristic values but their combination is a negatively contributing term with the same characteristic values. For $\log(x) \geq 10^{10000}$ the sum of the dominant term and all the terms with positive contribution is always negative, which gives the decreasing property for all $\log(x) \geq 10^{10000}$. 

For values $10^{15} \leq \log(x) \leq 10^{10000}$ we observe that all the negative contribution terms of the numerator are decreasing and the positive contribution terms are increasing. Now based on computational verifications, there exists a division for the interval $[10^{15},10^{10000}]$ so that for each subinterval $\Delta_k \in [x_k,x_{k+1}]$ the sum of the negative contribution terms at $x=x_k$ and the sum of the positive contribution terms at $x=x_{k+1}$ combined is negative, which implies that the numerator is negative for all $x$ in the interval. It is worth mentioning that the maximum lengths of the subintervals grows quickly. A good strategy is to increase the length of the next subinterval by giving it a length which is the length of the current subinterval multiplied by some $10^d$ with positive $d$. Examples of suitable choices for $d$ for $\log(x)$ in a certain interval are given in the following table.
\begin{center}
\begin{tabular}{ |l|l| } 
\hline
 $\log(x)$ & $d$ \\
\hline
$[10^{15},10^{65}]$&$0.5$ \\
$[10^{65},10^{565}]$&$5$ \\
$[10^{565},10^{5565}]$&$50$\\
$[10^{5565},10^{10000}]$&$500$\\
\hline
\end{tabular}
\end{center}
\end{proof}
\begin{prop}\label{Prop:E1E2}
The upper bounds for $E_1(x)$ and $E_2(x)$ for $\log(x) \geq 10000$ listed in Table \ref{tableBoundE1E2} hold and we also have that for all $\log(x)\geq 10^{15}$
\begin{align}
E_{1}(x) &\leq 6.4\cdot 10^{-1156217}\exp\left(-D\frac{\log\left(x\right)^{\frac{3}{5}} \log\left(\log\left(\log\left(x\right)\right)\right)}{15\log\left(\log\left(x\right)\right)^{\frac{6}{5}}}\right), \label{E1Gbound}\\
E_{2}(x) &\leq 2.1\cdot 10^{-1156229}\exp\left(-D\frac{\log\left(x\right)^{\frac{3}{5}} \log\left(\log\left(\log\left(x\right)\right)\right)}{15\log\left(\log\left(x\right)\right)^{\frac{6}{5}}}\right). \label{E2Gbound}
\end{align}
\end{prop}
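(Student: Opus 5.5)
The proof has two parts, matching the two claims. The first is the validity of the tabulated step-function bounds for $10^4\le\log(x)\le 10^{15}$. That is exactly the preceding proposition: interval-arithmetic evaluation on subintervals $[x_k,x_{k+1}]$, with $\hat u$ enclosed via Remark \ref{rem:hat-appro} and Lemma \ref{hat-lemma}. So I only need to observe that the table entry at $\log(x)=10^{15}$ is the starting constant for the tail argument. The real work is the tail $\log(x)\ge 10^{15}$.

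For the tail I would argue as in Remark \ref{rem:constantonG}. Write each of $E_{i1},E_{i2},E_{i3}$ (normalised by $\exp(-D\log(x)^{3/5}\log\log(x)^{-1/5})$) using Propositions \ref{Prop:Ei1}, \ref{Prop:Ei2}, \ref{Prop:Ei3}. Each exponent is a bracket multiplying $\tilde u_{1,0}(x)$ plus lower-order logarithmic terms.

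1. Use Proposition \ref{comparisonProp} and Lemma \ref{hatcomparison} to replace every factor $\bigl((1-p_i/m)\hat u_{m-p_i,q_i}\bigr)^{-1/(1-a)}$ from below by $\hat u_{1,0}(x)^{-3/5}$. Because the bracket is multiplied by $-\tfrac52\tilde u_{1,0}$, this only increases the bound.

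2. Absorb the remaining factors into a constant. These are the correction $b/(a\log u+b)$ evaluated at $\log\tilde u_{1,0}$, the $\frac12\log u$ and $\log p_i$, $\log q_i$ terms, the factor $\sqrt{2\pi/((m-p)(1-a))}$, and $\Phi_{4,\eta}$. The $q$-terms are dominated by $\tfrac32 q_1\log\tilde u_{1,0}$ in $G$, since $q_1\ge q_2$. $\Phi_{4,\eta}$ is bounded because $\hat u$ is trapped in $[0.9195,1)$ by Remark \ref{RemBase2}, and the $\Phi_{j,\epsilon}$ are monotone there.

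3. This yields $E_i(x)\le C_{i}G(x)$ for $x\ge \exp(10^{15})$, with $C_i=E_i(x_0)/G(x_0)$ at $\log(x_0)=10^{15}$. Here I must check that the ratio $E_i/G$ is non-increasing, i.e.\ that every discarded factor decreases in $x$. Step (b) of Lemma \ref{hat-lemma} and Lemma \ref{uhat-increase} give this, since $\hat u_{1,0}$ is increasing to $1$.

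4. By Remarks \ref{RemBase} and \ref{defbase}, $G(x)\le\exp(\tilde G(x))$. By Proposition \ref{G1decrease}, $\tilde G(x)+\frac{D\log(x)^{3/5}\log\log\log(x)}{15\log\log(x)^{6/5}}$ is negative and decreasing, so it is bounded by its value at $\log(x)=10^{15}$. Exponentiating gives
\[ E_i(x)\le C_i\exp\bigl(\tilde G(x_0)+\tfrac{D}{15}\log(x_0)^{3/5}\log\log\log(x_0)\log\log(x_0)^{-6/5}\bigr)\exp\Bigl(-D\frac{\log(x)^{3/5}\log\log\log(x)}{15\log\log(x)^{6/5}}\Bigr). \]
Evaluating the prefactor numerically with interval arithmetic at $\log(x_0)=10^{15}$ should give the constants $6.4\cdot10^{-1156217}$ and $2.1\cdot10^{-1156229}$. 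These are tiny because $\tilde G(x_0)\approx-\frac{D}{15}\cdot(\text{huge})$ at that height.

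The main obstacle is step 3, the monotonicity of $E_i(x)/G(x)$, and specifically controlling $\Phi_{4,\eta}$ and the $E_{i2},E_{i3}$ prefactors uniformly in $x$. I would first try to bound each such factor by its value at $\log(x)=10^{15}$, using the monotonicity of $\hat u$ and of $\log u_{m-p,q}$ together with the explicit forms of $\Phi_{1},\Phi_2,\Phi_3$. Failing that, I would replace them by crude constant upper bounds valid on $\hat u\in[0.9195,1]$, $u\ge \tilde u_{1,0}(\exp(10^{15}))$, and accept a slightly worse constant.
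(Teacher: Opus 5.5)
Your treatment of the tail (steps 1--4) follows the paper's own route. Remark~\ref{rem:constantonG} gives $E_i\le C_iG$, Remark~\ref{defbase} gives $G\le\exp(\tilde G)$, and Proposition~\ref{G1decrease} lets you freeze $\tilde G(x)+\frac{D}{15}\log(x)^{3/5}\log\log\log(x)\log\log(x)^{-6/5}$ at $\log x=10^{15}$. You are right that the monotonicity of $E_i/G$ is the thinnest point; the paper simply asserts it in Remark~\ref{rem:constantonG}.

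\textbf{The gap is in the first claim.} Table~\ref{tableBoundE1E2} lists bounds valid for all $x$ with $\log x$ at least the first-column value. Its last row therefore asserts $E_1(x)\le 1.2385\cdot10^{-1446121}$ and $E_2(x)\le 4.2396\cdot10^{-1446134}$ for \emph{all} $\log x\ge 10^{15}$, not just on $[10^4,10^{15}]$.

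The preceding proposition stops at $10^{15}$. Your tail bound does not cover a right-neighbourhood of $10^{15}$ either. You build $C_i=E_i(x_0)/G(x_0)$ from the tabulated value at $\log x_0=10^{15}$. The resulting bound at $x_0$ is then $C_i\exp(\tilde G(x_0))\ge C_iG(x_0)$, which is no smaller than the tabulated entry itself. Because $H_1$ and $\tilde G$ are cruder majorants, the inequality is strict in practice. So \eqref{E1Gbound}--\eqref{E2Gbound} cannot certify the last row on any interval $[10^{15},10^{15}+\delta]$.

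The paper closes this in two steps:
\begin{itemize}
\item It computes the last row by interval arithmetic over $\log x\in[10^{15},10^{15}+10^{13}]$.
\item It applies \eqref{E1Gbound}--\eqref{E2Gbound} only for $\log x\ge 10^{15}+10^{13}$. There they give $E_1\le 1.8\cdot10^{-1447780}$ and $E_2\le 5.7\cdot10^{-1447793}$, far below the last row.
\end{itemize}
You need this extra interval, or an equivalent one, to finish the proof of the tabulated bounds.
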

\begin{proof}
Bounds \ref{E1Gbound} and \ref{E2Gbound} is derived using Remarks \ref{rem:constantonG} and \ref{defbase} and Proposition \ref{G1decrease} and by assuming $\log(x) \geq 10^{15}$. We further remark that the last bound in Table \ref{tableBoundE1E2} is computed using interval assuming $\log(x) \in [10^{15},10^{15}+10^{13}]$. We then compute, according to \eqref{E1Gbound}, \eqref{E2Gbound}, Remarks \ref{rem:constantonG} and \ref{defbase} and Proposition \ref{G1decrease}, that for $\log(x) \geq 10^{15}+10^{13}$ we have
\begin{align*}
E_{1}(x) &\leq 1.8 \cdot 10^{-1447780}, \\
E_{2}(x) &\leq 5.7 \cdot 10^{-1447793},
\end{align*}
and both of these bounds validate the last bound in Table \ref{tableBoundE1E2}.
\end{proof}
\begin{defn}
We define
\begin{align*}
\mathcal{E}_1(x) := E_{1}(x)\exp\left(D\frac{\log\left(x\right)^{\frac{3}{5}} \log\left(\log\left(\log\left(x\right)\right)\right)}{15\log\left(\log\left(x\right)\right)^{\frac{6}{5}}}\right), \\
\mathcal{E}_2(x) := E_{2}(x)\exp\left(D\frac{\log\left(x\right)^{\frac{3}{5}} \log\left(\log\left(\log\left(x\right)\right)\right)}{15\log\left(\log\left(x\right)\right)^{\frac{6}{5}}}\right).
\end{align*}
\end{defn}
\begin{prop}\label{actualE1andE2bounds}
The upper bounds for $\mathcal{E}_1(x)$ and $\mathcal{E}_2(x)$ for $\log(x) \geq 10000$ listed in Table \ref{tableBound} hold.
\end{prop}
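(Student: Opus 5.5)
The statement concerns the normalized quantities $\mathcal{E}_i(x) = E_i(x)\exp\big(D\log(x)^{3/5}\log\log\log(x)/(15\log\log(x)^{6/5})\big)$. It is essentially a reformulation of Proposition~\ref{Prop:E1E2}, so I would treat two ranges separately: $10^4\le\log(x)\le 10^{15}$ and $\log(x)\ge 10^{15}$. Throughout, write $W(x)=D\log(x)^{3/5}\log\log\log(x)/(15\log\log(x)^{6/5})$ for the correction exponent. Note that $W(x)$ is increasing for $\log(x)\ge 10^4$. Its derivative in $\log(x)$ is positive there, since the $\frac35\log(x)^{-2/5}$ growth dominates the slowly varying $\log\log$ factors. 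I would check this directly by differentiating.

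For the finite range $[10^4,10^{15}]$, I would reuse the interval-arithmetic subdivision behind Table~\ref{tableBoundE1E2}. On each subinterval $[x_k,x_{k+1}]$ (in $\log(x)$) we already have a certified upper bound $\beta_k$ for $E_i(x)$. Since $W$ is increasing, the bound
\[
\mathcal{E}_i(x)\le \beta_k\exp(W(x_{k+1}))
\]
holds on that subinterval. To get a non-increasing step function as listed in Table~\ref{tableBound}, I would take running maxima from the right over the subintervals. Where this product is too lossy because $W$ grows across a long subinterval, I would refine the subdivision. Alternatively, I would bound $E_{i1},E_{i2},E_{i3}$ directly in the \texttt{RealIntervalField} computation with the extra factor $\exp(W(x))$ included, using the $\hat u$ enclosures of Remark~\ref{rem:hat-appro} exactly as in the proof of the preceding Proposition. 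Each entry of Table~\ref{tableBound} is then checked to dominate these computed values on the corresponding range.

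For $\log(x)\ge 10^{15}$, the first step is to multiply \eqref{E1Gbound} and \eqref{E2Gbound} by $\exp(W(x))$. This gives $\mathcal{E}_1(x)\le 6.4\cdot10^{-1156217}$ and $\mathcal{E}_2(x)\le 2.1\cdot10^{-1156229}$ uniformly. The second step is to sharpen the tail entry. Proposition~\ref{G1decrease} says $\tilde G+W$ is negative and decreasing, so by Remarks~\ref{rem:constantonG} and~\ref{defbase},
\[
\mathcal{E}_i(x)\le C_{i,x_0}\exp\big(\tilde G(x)+W(x)\big)\le C_{i,x_0}\exp\big(\tilde G(x_0)+W(x_0)\big)
\]
for $x\ge x_0$. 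Evaluating the right side at the endpoint of the last table interval, $\log(x_0)=10^{15}+10^{13}$, validates the final row of Table~\ref{tableBound}.

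The main obstacle I expect is the finite-range bookkeeping. The factor $\exp(W)$ is enormous, roughly $\exp(10^{7})$ near $\log(x)=10^{15}$. Multiplying by its value at the right endpoint of a subinterval can therefore destroy the monotone step structure unless the subintervals are fine enough relative to the growth of $W$. My first attempt would be to fold $W(x)$ into the interval evaluation of $\log E_{ij}(x)$ itself, working with logarithms to avoid underflow, so that the cancellation between $W$ and the decay built into $E_i$ is seen inside each interval rather than estimated separately.
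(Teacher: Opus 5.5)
Your proposal is correct and is essentially the paper's argument: the paper records the result as a corollary of Proposition~\ref{Prop:E1E2}. That means the per-subinterval interval-arithmetic bounds for $E_i$ are multiplied by the increasing factor $\exp(W)$ on $[10^4,10^{15}]$ (the entries of Table~\ref{tableBound} appear to come from exactly your product $\beta_k\exp(W(x_{k+1}))$, with no extra refinement), and \eqref{E1Gbound} and \eqref{E2Gbound} are multiplied by $\exp(W(x))$ for $\log(x)\ge 10^{15}$. Your second tail step is harmless but unnecessary, since $6.4\cdot 10^{-1156217}$ and $2.1\cdot 10^{-1156229}$ already lie far below the final-row entries $4.6112\cdot 10^{-1154558}$ and $1.5786\cdot 10^{-1154570}$.
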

\begin{proof}
The claim is essentially a corollary of Proposition \ref{Prop:E1E2}.
\end{proof}

\section{The Prime Number Theorem}\label{sec:pnt}

We now illustrate the application of our bounds on zero sums to obtain an explicit form of the prime number theorem.

We shall be assuming here the zero-density result of \eqref{eq:golnoush}, valid for $\sigma\in[0.75,1]$ and $T\ge 3\cdot 10^{12}$:
\[ N(\sigma,T) \leq \mathcal{U}T^{\frac{8}{3}(1-\sigma)}\log(T)^{3+2(1-\sigma)} + \mathcal{V}\log(T)^2 \]
and a zero-free region $1-v(u)$ where
\begin{align*}
v(u):=v_{a,b,c}(u)=cu^a\log(u)^b,
\end{align*}
with $a=-2/3$, $b=-1/3$ and $c=1/53.989$ \cite{BellottiZFR}.

One key tool shall be the explicit Perron's formula of \cite[Theorem 1.2]{CHJ-Perron}. 
\begin{thm}
For any $\alpha\in(0,1/2]$ and $\omega\in[0,1]$ there exist $M_{\omega,\alpha}$ and $x_{\omega,\alpha}$ such that for all $T$ with $\max\{102,2\log(x)^2\} < T/2 < (x^\alpha-2)/4$ we have  
 \[  \frac{|\psi(x) - x|}{x} < \sum_{0<|\gamma| \leq T} \frac{x^{\beta-1}}{|\gamma|} + 2M_{\omega,\alpha}\frac{\log(x)^{1-\omega}}{T}. \]
This holds for $x>x_{\omega,\alpha}$.
\end{thm}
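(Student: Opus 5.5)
This statement is \cite[Theorem 1.2]{CHJ-Perron}, which the paper imports. To reprove it I would follow the classical truncated Perron argument, keeping every constant explicit and arranging the error terms so that they collapse into the single term $2M_{\omega,\alpha}\log(x)^{1-\omega}/T$.

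\textbf{Truncated Perron formula.} Set $\kappa = 1+1/\log(x)$. I would start from
\[ \psi(x) = \frac{1}{2\pi i}\int_{\kappa-iT'}^{\kappa+iT'} -\frac{\zeta'}{\zeta}(s)\frac{x^s}{s}\,ds + R_1, \]
where $T'\in[T,T+1]$ is chosen so that the horizontal line $\Im(s)=T'$ stays at distance $\gg 1/\log T$ from every ordinate $\gamma$. The error satisfies
\[ |R_1| \ll \sum_n \Lambda(n)\left(\frac{x}{n}\right)^{\kappa}\min\left(1,\frac{1}{T|\log(x/n)|}\right). \]
I would split this sum into three ranges.
\begin{enumerate}
\item For $|n-x|>x/2$, the logarithm is bounded below, so these terms contribute $\ll x\log(x)/T$ via $-\zeta'/\zeta(\kappa)\le \log(x)+O(1)$.
\item For $x^{\alpha}<|n-x|\le x/2$, a dyadic decomposition in $|n-x|$, with a Chebyshev/Brun--Titchmarsh bound for $\psi$ on each short interval, gives $\ll x\log(x)/T$ again; losing a further factor $\log x$ here would be fatal, so this range must be counted this way.
\item For $|n-x|\le x^{\alpha}$ the factor $\min(1,\cdot)$ is essentially $1$. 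Here I would use the explicit Brun--Titchmarsh inequality $\psi(x+y)-\psi(x)\le 2y\log(x)/\log(y)+\dots$.
\end{enumerate}
Interpolating between the trivial bound and the Brun--Titchmarsh saving is what produces the parameter $\omega\in[0,1]$. The upper condition $T<2(x^{\alpha}-2)$ guarantees that the window of width $x/T$ around $x$ has length at least $x^{1-\alpha}$, so Brun--Titchmarsh applies there.

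\textbf{Contour shift.} Next I would move the segment to $\Re(s)=-U$ with $U\to\infty$ along suitable half-odd lines, or stop at a fixed line left of $0$ and bound that integral trivially. This picks up the residue $x$ at $s=1$, the terms $-x^{\rho}/\rho$ for $|\gamma|\le T'$, the constant $-\zeta'/\zeta(0)=-\log 2\pi$, and the trivial-zero terms, which are $O(1)$ after division by $x$. The horizontal segments are controlled by the standard explicit bound $|\zeta'/\zeta(\sigma+iT')|\ll\log^2 T'$ for $-1\le\sigma\le 2$, valid for the chosen $T'$. They contribute $\ll x\log^2(T)/(T\log x)$. Since $2\log(x)^2<T<x^{\alpha}$, we have $\log T\asymp\log x$, so this is $\ll x\log(x)/T$. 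The zeros with $T<|\gamma|\le T'$ number $O(\log T)$, each of size $\le x/T$, and are absorbed in the same way.

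\textbf{Collecting terms.} Dividing by $x$ and using $|x^{\rho}/\rho|\le x^{\beta}/|\gamma|$ gives the displayed inequality for $x>x_{\omega,\alpha}$. Here $M_{\omega,\alpha}$ collects the constants from $R_1$, from the horizontal integrals and from the $O(1/x)$ terms; the lower bound $T/2>102$ is what lets those $O(1/x)$ terms be dominated by $\log(x)^{1-\omega}/T$ when $x$ is large. I expect the main obstacle to be the range $|n-x|\le x^{\alpha}$ of $R_1$: obtaining the sharp factor $\log(x)^{1-\omega}$ rather than $\log x$, with a fully explicit constant, requires careful bookkeeping in the explicit Brun--Titchmarsh step and in handling prime powers. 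Everything else is routine explicit estimation.
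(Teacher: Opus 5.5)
The paper does not reprove this statement. Its proof only quotes \cite[Theorem 1.2]{CHJ-Perron}, takes absolute values, and removes that theorem's auxiliary height $T^\ast$ by taking the worst case; this is the source of the $T/2$ and the factor $2$. Your from-scratch reconstruction has a genuine gap for every $\omega>0$, and $\omega=0.8$ is the case the paper uses. By your own accounting, four of your pieces are of size $x\log(x)/T$ once $T$ is a power of $x$:
\begin{itemize}
\item the range $|n-x|>x/2$;
\item the dyadic range;
\item the horizontal segments, which you bound by $x\log^2(T)/(T\log x)$;
\item the $O(\log T)$ extra zeros with $T<|\gamma|\le T'$, each of size up to $x/T$.
\end{itemize}
Since $\log(x)/\log(x)^{1-\omega}=\log(x)^{\omega}\to\infty$, no constant $M_{\omega,\alpha}$ absorbs these. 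Your argument therefore proves only the case $\omega=0$. The ``interpolation between the trivial bound and Brun--Titchmarsh'' does not produce $\omega$: Brun--Titchmarsh matters only for $|n-x|\lesssim x/T$, and there it already gives $O(x/T)$ with no logarithm. So the step you single out as the main obstacle is not where the loss occurs.

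The loss comes from bounding the truncation error in absolute value. Your treatment of $|n-x|>x/2$ is merely lossy: keeping the weight $1/|\log(x/n)|$ gives $x\log\log(x)/T$. The range $x/T<|n-x|\le x/2$ cannot be rescued this way. There the kernel is $\asymp x/(T|n-x|)$, so even with perfectly equidistributed primes each dyadic block contributes $\asymp x/T$. The $\asymp\log T$ blocks then give $\asymp x\log(T)/T$, which is $\asymp \alpha x\log(x)/T$ for $T$ near $x^{\alpha}$.

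To go below this you must use cancellation. The leading part of the single-$n$ truncation error is oscillatory, roughly $-(x/n)^{\kappa}\cos(T\log(x/n))/(\pi T\log(x/n))$. One way in is to average the truncated formula over heights $t\in[T/2,T]$. This reduces that part to $O\big((x/n)^{\kappa}T^{-2}\log^{-2}(x/n)\big)$, whose sum over $n$ is $O(x/T)$ by Brun--Titchmarsh. The horizontal segments likewise need more than the single-height bound $|\zeta'/\zeta|\ll\log^2 T$. After averaging, the zeros with $|\gamma|\le T$ appear with weights in $[0,1]$, so the zero contribution is bounded by $\sum_{|\gamma|\le T}x^{\beta}/|\gamma|$, which is exactly the shape of the statement.

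In the same spirit, take any auxiliary height at most $T$ rather than in $[T,T+1]$. Then $|\sum_{|\gamma|\le T^\ast}x^{\rho}/\rho|\le\sum_{|\gamma|\le T}x^{\beta}/|\gamma|$ trivially, and the extra zeros never need to be bounded.
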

\begin{proof}
This is immediate from \cite[Theorem 1.2]{CHJ-Perron}. 
We have simply taken absolute values and eliminated $T^\ast$ by taking the worst case in each bound.
\end{proof}
Using the values from \cite[Table 1]{CHJ-Perron} we take
\[ \alpha=1/100,\quad \omega=0.8,\quad M_{\omega,\alpha}=3.615,\quad \log(x_{\omega,\alpha}) = 10^{3}. \]

As before we set
\[ T(x) = t_1(x)^{1+\epsilon}
\]
with $\epsilon=2$, where $t_1(x)$ is as defined in Definition \ref{defn:um}.
We shall only consider $\log(x_0) \ge 10000$.
With these values we have
\[ \max\{102,2\log(x)^2\} < T(x)/2 < (x^\alpha-2)/4 \]
for $\log(x)>\log(x_0) \ge 10000$.

We may thus write
\begin{equation} \frac{|\psi(x)-x|}{x} < 
  2\sum_{ \substack{0 < \beta \leq \sigma_0\\0<\gamma \leq T(x)} } \frac{x^{\beta-1}}{\gamma}
  +
  2\sum_{ \substack{\sigma_0 < \beta \leq \sigma_1(x)\\0<\gamma \leq T(x)} } \frac{x^{\beta-1}}{\gamma}+
  2\sum_{ \substack{\sigma_1(x) < \beta \\0<\gamma \leq T(x)} } \frac{x^{\beta-1}}{\gamma}
  + 2M_{\omega,\alpha} \frac{\log(x)^{1-\omega}}{T(x)} \label{boundcombination}
\end{equation}
for $\log(x)>\log(x_0)\ge 10000$ where we take
\begin{align*}
    \sigma_0 &= 0.75\\
    \sigma_1(x) &= 1+\frac{-\tilde{u}_{1,0}(x)-\log(x)v(\tilde{u}_{1,0}(x))+4\log(\tilde{u}_{1,0}(x))}{\log(x)}. 
\end{align*}
 We can verify that with $\log(x)>\log(x_0) \ge 10000$ we shall have $\sigma_1(x) > 0.997 > 0.99$.

We may now bound 
\[     2M_{\omega,\alpha} \frac{\log(x)^{1-\omega}}{T(x)} \leq  7.23 \frac{\log(x)^{1/5} }{t_1(x)^{3}}
  \]
using the above.

Next, one may verify that 
\[ \mathcal{E}_{\rm per}(x) =  7.23 \frac{\log(x)^{1/5}\exp\left(D\left(1+\frac{\log\log\log(x)}{15\log\log(x)}\right)\log(x)^{3/5}\log\log(x)^{-1/5}\right) }{t_1(x)^{3}} 
\] 
is decreasing in $x$ for $\log(x) > 10000$ and hence we obtain
\[  2M_{\omega,\alpha} \frac{\log(x)^{1-\omega}}{T(x)} \leq \mathcal{E}_{\rm per}(x_0)
\exp\left(-D\left(1+\frac{\log\log\log(x)}{15\log\log(x)}\right)\log(x)^{3/5}\log\log(x)^{-1/5}\right) \]
We list upper bounds for $\mathcal{E}_{\rm per}(x)$ in Table \ref{tableBound}. We also note that for $t_1(x)^3$ we apply the trivial bound $\hat{u}_{1,0}(x)<1$ uniformly. This gives relatively much larger upper bounds for $\mathcal{E}_{\rm per}(x)$ compared to the actual values but we also remark that since this term has very small overall contribution in the application, we accept these bounds as they are.

Next, using Proposition \ref{prop:lowlying} we bound
\begin{align*}  2\sum_{ \substack{0 < \beta \leq \sigma_0\\0<\gamma \leq T(x)} } \frac{x^{\beta-1}}{\gamma} 
&<
  2( 57.5308569955611)x^{-1/2} + \\
&2\bigg(\frac{1}{2\pi}\log\left(\frac{T(x)}{H_0}\right)\log\left(\frac{\sqrt{T(x)H_0}}{2\pi}\right) - \frac{Q(H_0)}{H_0} + \frac{ R_{7/8}(T(x)) }{T(x)} \\ &\quad +  \frac{2A_0 + 2A_1\log(H_0) + A_1+A_2}{H_0^2} \bigg)\left( \frac{x^{\sigma_0-1} + x^{-\sigma_0}}{2} \right).\\
&<\mathcal{E}_{1/2}(x) \exp\left(-D\left(1+\frac{\log\log\log(x)}{15\log\log(x)}\right)\log(x)^{3/5}\log\log(x)^{-1/5}\right) 
\end{align*}

where 
\begin{align}
\label{eq:E1-2}\begin{split} \mathcal{E}_{1/2}(x)  & := 
2\bigg(  57.5308569955611 x^{-1/2} +
\bigg(\frac{1}{2\pi}\log\left(\frac{T(x)}{H_0}\right)\log\left(\frac{\sqrt{T(x)H_0}}{2\pi}\right) - \frac{Q(H_0)}{H_0} + \frac{ R_{7/8}(T(x)) }{T(x)} \\ &\quad +  \frac{2A_0 + 2A_1\log(H_0) + A_1+A_2}{H_0^2} \bigg)\left( \frac{x^{\sigma_0-1} + x^{-\sigma_0}}{2} \right) \bigg) \\&\times\exp\left(D\left(1+\frac{\log\log\log(x)}{15\log\log(x)}\right)\log(x)^{3/5}\log\log(x)^{-1/5}\right).
\end{split}
\end{align}
As above one verifies that for $\log(x)>10000$ this is decreasing in $x$ and hence
\[ 
 2\sum_{ \substack{0 < \beta \leq \sigma_0\\0<\gamma \leq T(x)} } \frac{x^{\beta-1}}{\gamma} 
 < \mathcal{E}_{1/2}(x_0) \exp\left(-D\left(1+\frac{\log\log\log(x)}{15\log\log(x)}\right)\log(x)^{3/5}\log\log(x)^{-1/5}\right).
\]
We list upper bounds for $\mathcal{E}_{1/2}(x)$ in Table \ref{tableBound}.

Next, using Corollary \ref{cor:intermediate} we bound
\begin{align*}   2\sum_{ \substack{\sigma_0 < \beta \leq \sigma_1(x)\\0<\gamma \leq T(x)} } \frac{x^{\beta-1}}{\gamma}
&< 2\left(
2.282680 \cdot 10^{-8}  \mathcal{U} +
2.949343 \cdot 10^{-10}  \mathcal{V}
\right)x^{\sigma_1(x)-1}\\
& = 2\left(
2.282680 \cdot 10^{-8}  \mathcal{U} +
2.949343 \cdot 10^{-10}  \mathcal{V}
\right) \\ & \quad \times \exp(-\tilde{u}_{1,0}(x)-\log(x)v(\tilde{u}_{1,0}(x))+4\log(\tilde{u}_{1,0}(x))) \\
& < 2\left(
2.282680 \cdot 10^{-8}  \mathcal{U} +
2.949343 \cdot 10^{-10}  \mathcal{V}
\right)\mathcal{E}_{\rm mid}(x) \\ & \quad \times\exp\left(-D\left(1+\frac{\log\log\log(x)}{15\log\log(x)}\right)\log(x)^{3/5}\log\log(x)^{-1/5}\right)
\end{align*}
where
\begin{align*} \mathcal{E}_{\rm mid}(x)  &= \exp(-\tilde{u}_{1,0}(x)-\log(x)v(\tilde{u}_{1,0}(x))+4\log(\tilde{u}_{1,0}(x))) \\ & \quad \times 
\exp\left(D\left(1+\frac{\log\log\log(x)}{15\log\log(x)}\right)\log(x)^{3/5}\log\log(x)^{-1/5}\right).
\end{align*}
We remark that $\mathcal{E}_{\rm mid}(x)$ is decreasing if we assume $\log(x) \geq 15000$. The proof of this can be achieved with similar strategy as in the proof of Proposition \ref{G1decrease} and is shown in \texttt{Sage} code.

Finally, using \eqref{eq:edge} and Proposition \ref{Prop:E1E2}  we bound
\[  2\sum_{ \substack{\sigma_1(x) < \beta \\0<\gamma \leq T(x)} } \frac{x^{\beta-1}}{\gamma}  <   2(\mathcal{U}\mathcal{E}_{1}(x)+\mathcal{V}\mathcal{E}_{2}(x)) \exp\left(-D\left(1+\frac{\log\log\log(x)}{15\log\log(x)}\right)\log(x)^{3/5}\log\log(x)^{-1/5}\right).
 \]

Since for each of the four terms above the coefficient of
$\exp(-D\log(x)^{3/5}\log\log(x)^{-1/5}) $ is decreasing in $x$ for $\log(x)>15000$ we obtain
\begin{thm}\label{thm:assymp}
For $\log(x) > 10000$ we have
\[ \frac{|\psi(x) - x|}{x} < \mathcal{E}(x)\exp\left(-D\left(1+\frac{\log\log\log(x)}{15\log\log(x)}\right)\log(x)^{3/5}\log\log(x)^{-1/5}\right) \]
where
\begin{align*} \mathcal{E}(x) &= \mathcal{E}_{\rm per}(x) + \mathcal{E}_{1/2}(x) + 2\left(
2.282680 \cdot 10^{-8}  \cdot 12.45321 +
2.949343 \cdot 10^{-10}  \cdot 3.86894
\right)\mathcal{E}_{\rm mid}(x) \\
&+ 2(12.45321\mathcal{E}_{1}(x)+3.86894\mathcal{E}_{2}(x)) \end{align*}
and that we can give upper bounds for $\mathcal{E}(x)$ that decrease in $x$ as given in Table \ref{tableBound}.
\end{thm}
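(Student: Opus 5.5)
The plan is to assemble the four pieces already bounded in Section~\ref{sec:pnt}, starting from the decomposition \eqref{boundcombination}. By the explicit Perron formula (with $\alpha=1/100$, $\omega=0.8$, $M_{\omega,\alpha}=3.615$), and since $T(x)=t_1(x)^3$ satisfies the admissibility constraints for $\log(x)>10000$, $E_\psi(x)$ is bounded by twice the sum over zeros with $0<\gamma\le T(x)$ plus the Perron error. We split the zero sum at $\sigma_0=0.75$ and at $\sigma_1(x)$, and write $\Xi(x)$ for the common factor $\exp\bigl(-D(1+\tfrac{\log\log\log(x)}{15\log\log(x)})\log(x)^{3/5}\log\log(x)^{-1/5}\bigr)$. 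The four resulting terms are then handled as follows:
\begin{enumerate}
\item The Perron error is at most $7.23\log(x)^{1/5}t_1(x)^{-3}=\mathcal{E}_{\rm per}(x)\Xi(x)$.
\item The zeros with $\beta\le\sigma_0$ are handled by Proposition~\ref{prop:lowlying} with $\sigma=0.75$, giving $\mathcal{E}_{1/2}(x)\Xi(x)$.
\item The zeros with $\sigma_0<\beta\le\sigma_1(x)$ are handled by Corollary~\ref{cor:intermediate}. This uses the last case ($\sigma_0=0.75$, $\log x>1000$), which is legitimate because $\sigma_1(x)>0.99$. The contribution is a constant times $x^{\sigma_1(x)-1}=\mathcal{E}_{\rm mid}(x)\Xi(x)\cdot\exp(\ldots)^{-1}$, which is exactly the definition of $\mathcal{E}_{\rm mid}$.
\item The zeros with $\beta>\sigma_1(x)$ are handled by \eqref{eq:edge} with $m=1$. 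Since $T(x)=t_1^{1+\epsilon}$ with $\epsilon=2$, the hypotheses $T\ge t_{m-p,q}^{1+\eta}$ of Propositions~\ref{Prop:Ei2} and \ref{Prop:Ei3} hold, because $t_{m-p,q}$ is only slightly larger than $t_1$ by Proposition~\ref{comparisonProp}. The contribution is therefore $(\mathcal{U}\mathcal{E}_1+\mathcal{V}\mathcal{E}_2)\Xi(x)$ by the definitions of $E_1,E_2,\mathcal{E}_1,\mathcal{E}_2$.
\end{enumerate}
Finally we insert $\mathcal{U}=12.45321$ and $\mathcal{V}=3.86894$ from \eqref{eq:golnoush}. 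Summing the four terms with the factor $2$ yields exactly the stated $\mathcal{E}(x)$.

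It remains to show that $\mathcal{E}(x)$ admits decreasing upper bounds, so that the table value at $x_0$ is valid for all $x\ge x_0$. For $\mathcal{E}_1$ and $\mathcal{E}_2$ this is Proposition~\ref{actualE1andE2bounds}, which combines the step-function bounds of Table~\ref{tableBoundE1E2} on $[10^4,10^{15}]$ with Proposition~\ref{G1decrease} beyond that range. For $\mathcal{E}_{\rm per}$ and $\mathcal{E}_{1/2}$ monotonicity is verified directly for $\log x>10000$. Here I would use the trivial bound $\hat u_{1,0}<1$, so that $t_1\ge\exp(\tilde u_{1,0})$ replaces $t_1$ by the explicit $\exp(\tilde u_{1,0}(x))$. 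The $\mathcal{E}_{1/2}$ term is then dominated by $x^{-1/4}$ times a polylogarithm times $\exp(O(\log^{3/5}x))$, whose derivative is clearly negative; this is checked symbolically.

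For $\mathcal{E}_{\rm mid}$ I expect the main obstacle. Its exponent is $-\tilde u_{1,0}-\log(x)v(\tilde u_{1,0})+4\log\tilde u_{1,0}$, and this must be compared with $D(1+\ldots)\log^{3/5}\log\log^{-1/5}$, where $D\log^{3/5}\log\log^{-1/5}=\tfrac52\tilde u_{1,0}$. Substituting $v(\tilde u)=c\tilde u^{-2/3}\log(\tilde u)^{-1/3}$, the leading terms cancel, and what remains has the shape $\tilde u\cdot(\text{terms in }\log\log\log x/\log\log x)$. Its sign and monotonicity for $\log x\ge 15000$ would be established by the same term-by-term dominance and subinterval argument as in Proposition~\ref{G1decrease}, supplemented by interval arithmetic on $[10^4,1.5\cdot10^4]$, where the bound is instead taken as a step function.

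With all four coefficients bounded by non-increasing step functions, their sum gives the entries of Table~\ref{tableBound}, which proves the theorem.
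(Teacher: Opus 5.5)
Your proposal follows the paper's proof essentially step for step: the same decomposition \eqref{boundcombination}, the same four ingredients (the Perron error, Proposition \ref{prop:lowlying}, Corollary \ref{cor:intermediate}, and \eqref{eq:edge} with Proposition \ref{Prop:E1E2}), and the same monotonicity arguments, including a separate uniform bound for $\mathcal{E}_{\rm mid}$ on $10000\le\log(x)\le 15000$. One small correction: the lower bound $t_1(x)\ge\exp(\tilde u_{1,0}(x))$ coming from $\hat u_{1,0}<1$ is the right direction for $\mathcal{E}_{\rm per}$ but not for $\mathcal{E}_{1/2}$, where $T(x)$ also enters through the increasing factor $\log(T/H_0)\log(\sqrt{TH_0}/2\pi)$, so there you also need an upper bound on $T(x)$ (from a lower bound on $\hat u_{1,0}$, e.g.\ $\hat u_{1,0}>0.1$); this is harmless given the $x^{-1/4}$ factor.
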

\begin{proof}
We combine the above bounds for the components in \eqref{boundcombination} and set $\mathcal{U} = 12.45321$ and $\mathcal{V} = 3.86894$ according to \eqref{eq:golnoush}. This gives the result with the caveat that for $10000 \leq \log(x) \leq 15000$ we must use an upper bound for $\mathcal{E}_{\rm mid}(x)$ valid on this whole interval.
\end{proof}

\subsection{Proof of Theorem \ref{OurMainTheorem}}\label{sec:prooftheorem1}

We shall define $L(x_0)$ be the supremum over $x\geq x_0$ of the best of the following bounds which is admissible at $x$.

For $x\geq \exp(10^8)$ we may apply Theorem \ref{thm:assymp} with $\log(x_0)=10^8$.

For $3 \leq \log(x) < 4$ we note that the bound is trivial and for $4 \leq \log(x) \leq 43$ we apply the bound from \cite{Buthe}, which states that
\begin{align*}
\frac{|\psi(x)-x|}{x} &\leq 0.94x^{-1/2}, 11 < x \leq 10^{19}
\end{align*}
and for $43 \leq \log(x) \leq 1300$ our bound will come from Table 2 in \cite{FKS2} by noting that 
\begin{align*}
\frac{|\psi(x)-x|}{x} &\leq 1.9220 \cdot 10^{-8}, 43 \leq \log(x) 
\end{align*}

For $1300\leq \log(x)$ we may apply \eqref{eq:fksresult}, i.e.\ the main theorem of \cite{FKS1}. The bound in Theorem \ref{thm:assymp} becomes stronger than \eqref{eq:fksresult} for $\log(x)$ approximately $1.238\cdot 10^{8}$. Thus beyond this point we will simply take $L(x_0) = \mathcal{E}(x)$ by Theorem \ref{thm:assymp}.

With $L(x_0)$ as defined above it will be a decreasing function which satisfies \begin{align*}
\frac{|\psi(x)-x|}{x} \leq L(x)\exp\left(-D\left(1+\frac{\log\log\log(x)}{15\log\log(x)}\right)\log(x)^{3/5}\log\log(x)^{-1/5}\right).
\end{align*} This completes the proof.

\newpage
\begin{table}[h]
\begin{center}
\caption{Upper bounds valid for all $x$ with $\log(x)$ at least the value in the first column}
\label{tableBoundE1E2}
\begin{tabular}{|l|l|l|}
\hline
$\log(x)$ & $E_{1}(x)$ & $E_{2}(x)$ \\
\hline
$ 10000 $&$ 123.61 $&$ 4.4675 $\\
$ 11000 $&$ 113.71 $&$ 3.9377 $\\
$ 12000 $&$ 104.62 $&$ 3.4824 $\\
$ 13000 $&$ 96.278 $&$ 3.0933 $\\
$ 14000 $&$ 88.641 $&$ 2.7669 $\\
$ 15000 $&$ 81.648 $&$ 2.4798 $\\
$ 16000 $&$ 75.248 $&$ 2.2268 $\\
$ 17000 $&$ 69.388 $&$ 2.0032 $\\
$ 18000 $&$ 64.023 $&$ 1.8051 $\\
$ 19000 $&$ 59.109 $&$ 1.6293 $\\
$ 20000 $&$ 54.607 $&$ 1.4729 $\\
$ 30000 $&$ 30.995 $&$ 0.64565 $\\
$ 40000 $&$ 14.882 $&$ 0.27342 $\\
$ 50000 $&$ 7.6377 $&$ 0.12560 $\\
$ 60000 $&$ 4.0883 $&$ 0.061255 $\\
$ 70000 $&$ 2.2653 $&$ 0.031310 $\\
$ 80000 $&$ 1.2911 $&$ 0.016624 $\\
$ 90000 $&$ 0.75377 $&$ 0.0091116 $\\
$ 100000 $&$ 0.44937 $&$ 0.0051313 $\\
$ 110000 $&$ 0.27287 $&$ 0.0029585 $\\
$ 120000 $&$ 0.16844 $&$ 0.0017413 $\\
$ 130000 $&$ 0.10551 $&$ 0.0010439 $\\
$ 140000 $&$ 0.066983 $&$ 0.00063611 $\\
$ 150000 $&$ 0.043044 $&$ 0.00039344 $\\
$ 160000 $&$ 0.027971 $&$ 0.00024665 $\\
$ 170000 $&$ 0.018364 $&$ 0.00015656 $\\
$ 180000 $&$ 0.012172 $&$ 0.00010050 $\\
$ 190000 $&$ 0.0081395 $&$ 0.000065199 $\\
$ 200000 $&$ 0.0054881 $&$ 0.000042711 $\\
$ 300000 $&$ 0.00019328 $&$ 1.1384 \times 10^{-6} $\\
$ 400000 $&$ 8.1961 \times 10^{-6} $&$ 4.1435 \times 10^{-8} $\\
$ 500000 $&$ 4.8391 \times 10^{-7} $&$ 2.1657 \times 10^{-9} $\\
$ 600000 $&$ 3.6292 \times 10^{-8} $&$ 1.4676 \times 10^{-10} $\\
$ 700000 $&$ 3.2717 \times 10^{-9} $&$ 1.2129 \times 10^{-11} $\\
$ 800000 $&$ 3.4182 \times 10^{-10} $&$ 1.1744 \times 10^{-12} $\\
$ 900000 $&$ 4.0341 \times 10^{-11} $&$ 1.2955 \times 10^{-13} $\\
$ 1000000 $&$ 5.2776 \times 10^{-12} $&$ 1.5948 \times 10^{-14} $\\
$ 10000000 $&$ 7.7284 \times 10^{-52} $&$ 5.6992 \times 10^{-55} $\\
$ 100000000 $&$ 7.6430 \times 10^{-191} $&$ 1.2530 \times 10^{-194} $\\
$ 1000000000 $&$ 2.8388 \times 10^{-675} $&$ 9.1663 \times 10^{-680} $\\
$ 10000000000 $&$ 8.3863 \times 10^{-2379} $&$ 4.5025 \times 10^{-2384} $\\
$ 100000000000 $&$ 1.1247 \times 10^{-8432} $&$ 7.8272 \times 10^{-8439} $\\
$ 1000000000000 $&$ 1.3275 \times 10^{-30155} $&$ 8.2271 \times 10^{-30163} $\\
$ 10000000000000 $&$ 5.6661 \times 10^{-108762} $&$ 1.7614 \times 10^{-108770} $\\
$ 100000000000000 $&$ 3.9456 \times 10^{-395273} $&$ 2.5444 \times 10^{-395283} $\\
$ 1000000000000000 $&$ 1.2385 \times 10^{-1446121} $&$ 4.2396 \times 10^{-1446134} $\\
\hline
\end{tabular}
\end{center}
\end{table}
\newpage
\begin{table}[h]
\begin{center}
\caption{Upper bounds valid for all $x$ with $\log(x)$ at least the value in the first column}
\label{tableBound}
\resizebox{\textwidth}{!}{%
\begin{tabular}{ |l|l|l|l|l|l|l| }
\hline
 $\log(x)$ & $\mathcal{E}_{\rm per}(x)$ & $\mathcal{E}_{1/2}(x)$ & $\mathcal{E}_{\rm mid}(x)$ &$\mathcal{E}_{1}(x)$ & $\mathcal{E}_{2}(x)$ & $\mathcal{E}(x)$ \\
\hline
$ 10000 $&$ 0.12804 $&$ 2.8866 \times 10^{-1070} $&$ 164.85 $&$ 207.10 $&$ 7.4850 $&$ 5216.0 $\\
$ 11000 $&$ 0.093248 $&$ 5.6182 \times 10^{-1178} $&$ 164.85 $&$ 195.51 $&$ 6.7705 $&$ 4921.9 $\\
$ 12000 $&$ 0.068650 $&$ 1.0025 \times 10^{-1285} $&$ 164.85 $&$ 184.40 $&$ 6.1383 $&$ 4640.3 $\\
$ 13000 $&$ 0.051027 $&$ 1.6597 \times 10^{-1393} $&$ 164.85 $&$ 173.81 $&$ 5.5844 $&$ 4372.3 $\\
$ 14000 $&$ 0.038256 $&$ 2.5735 \times 10^{-1501} $&$ 164.85 $&$ 163.76 $&$ 5.1115 $&$ 4118.2 $\\
$ 15000 $&$ 0.028906 $&$ 3.7654 \times 10^{-1609} $&$ 163.05 $&$ 154.24 $&$ 4.6846 $&$ 3877.9 $\\
$ 16000 $&$ 0.021997 $&$ 5.2303 \times 10^{-1717} $&$ 161.15 $&$ 145.26 $&$ 4.2985 $&$ 3651.1 $\\
$ 17000 $&$ 0.016848 $&$ 6.9322 \times 10^{-1825} $&$ 158.65 $&$ 136.79 $&$ 3.9489 $&$ 3437.5 $\\
$ 18000 $&$ 0.012983 $&$ 8.8045 \times 10^{-1933} $&$ 155.66 $&$ 128.82 $&$ 3.6319 $&$ 3236.6 $\\
$ 19000 $&$ 0.010060 $&$ 1.0755 \times 10^{-2040} $&$ 152.29 $&$ 121.33 $&$ 3.3442 $&$ 3047.7 $\\
$ 20000 $&$ 0.0078351 $&$ 1.2675 \times 10^{-2148} $&$ 148.63 $&$ 114.29 $&$ 3.0825 $&$ 2870.3 $\\
$ 30000 $&$ 0.00080456 $&$ 1.4541 \times 10^{-3228} $&$ 107.17 $&$ 78.307 $&$ 1.6312 $&$ 1963.0 $\\
$ 40000 $&$ 0.00010998 $&$ 2.5859 \times 10^{-4309} $&$ 72.125 $&$ 43.546 $&$ 0.80005 $&$ 1090.8 $\\
$ 50000 $&$ 0.000018181 $&$ 1.3880 \times 10^{-5390} $&$ 47.574 $&$ 25.477 $&$ 0.41894 $&$ 637.78 $\\
$ 60000 $&$ 3.4536 \times 10^{-6} $&$ 3.1607 \times 10^{-6472} $&$ 31.280 $&$ 15.369 $&$ 0.23028 $&$ 384.57 $\\
$ 70000 $&$ 7.3059 \times 10^{-7} $&$ 3.7392 \times 10^{-7554} $&$ 20.643 $&$ 9.5136 $&$ 0.13149 $&$ 237.97 $\\
$ 80000 $&$ 1.6857 \times 10^{-7} $&$ 2.6238 \times 10^{-8636} $&$ 13.713 $&$ 6.0153 $&$ 0.077452 $&$ 150.42 $\\
$ 90000 $&$ 4.1797 \times 10^{-8} $&$ 1.1979 \times 10^{-9718} $&$ 9.1814 $&$ 3.8737 $&$ 0.046825 $&$ 96.841 $\\
$ 100000 $&$ 1.1017 \times 10^{-8} $&$ 3.8071 \times 10^{-10801} $&$ 6.1981 $&$ 2.5349 $&$ 0.028946 $&$ 63.358 $\\
$ 110000 $&$ 3.0608 \times 10^{-9} $&$ 8.8666 \times 10^{-11884} $&$ 4.2188 $&$ 1.6826 $&$ 0.018243 $&$ 42.049 $\\
$ 120000 $&$ 8.9058 \times 10^{-10} $&$ 1.5751 \times 10^{-12966} $&$ 2.8947 $&$ 1.1313 $&$ 0.011695 $&$ 28.266 $\\
$ 130000 $&$ 2.6994 \times 10^{-10} $&$ 2.2035 \times 10^{-14049} $&$ 2.0015 $&$ 0.76937 $&$ 0.0076115 $&$ 19.222 $\\
$ 140000 $&$ 8.4865 \times 10^{-11} $&$ 2.4917 \times 10^{-15132} $&$ 1.3942 $&$ 0.52880 $&$ 0.0050218 $&$ 13.210 $\\
$ 150000 $&$ 2.7575 \times 10^{-11} $&$ 2.3271 \times 10^{-16215} $&$ 0.97799 $&$ 0.36696 $&$ 0.0033542 $&$ 9.1656 $\\
$ 160000 $&$ 9.2316 \times 10^{-12} $&$ 1.8278 \times 10^{-17298} $&$ 0.69064 $&$ 0.25692 $&$ 0.0022656 $&$ 6.4164 $\\
$ 170000 $&$ 3.1763 \times 10^{-12} $&$ 1.2260 \times 10^{-18381} $&$ 0.49082 $&$ 0.18136 $&$ 0.0015461 $&$ 4.5289 $\\
$ 180000 $&$ 1.1207 \times 10^{-12} $&$ 7.1154 \times 10^{-19465} $&$ 0.35092 $&$ 0.12900 $&$ 0.0010651 $&$ 3.2210 $\\
$ 190000 $&$ 4.0467 \times 10^{-13} $&$ 3.6145 \times 10^{-20548} $&$ 0.25234 $&$ 0.092400 $&$ 0.00074013 $&$ 2.3071 $\\
$ 200000 $&$ 1.4930 \times 10^{-13} $&$ 1.6231 \times 10^{-21631} $&$ 0.18244 $&$ 0.066629 $&$ 0.00051854 $&$ 1.6635 $\\
$ 300000 $&$ 1.7533 \times 10^{-17} $&$ 2.8203 \times 10^{-32467} $&$ 0.0091399 $&$ 0.0045027 $&$ 0.000026521 $&$ 0.11235 $\\
$ 400000 $&$ 6.6398 \times 10^{-21} $&$ 6.4676 \times 10^{-43306} $&$ 0.00064552 $&$ 0.00031793 $&$ 1.6073 \times 10^{-6} $&$ 0.0079308 $\\
$ 500000 $&$ 5.4370 \times 10^{-24} $&$ 1.9497 \times 10^{-54146} $&$ 0.000058129 $&$ 0.000029604 $&$ 1.3249 \times 10^{-7} $&$ 0.00073835 $\\
$ 600000 $&$ 7.7951 \times 10^{-27} $&$ 2.5687 \times 10^{-64988} $&$ 6.2894 \times 10^{-6} $&$ 3.3671 \times 10^{-6} $&$ 1.3616 \times 10^{-8} $&$ 0.000083967 $\\
$ 700000 $&$ 1.7218 \times 10^{-29} $&$ 3.0524 \times 10^{-75831} $&$ 7.8687 \times 10^{-7} $&$ 4.4671 \times 10^{-7} $&$ 1.6560 \times 10^{-9} $&$ 0.000011139 $\\
$ 800000 $&$ 5.3817 \times 10^{-32} $&$ 5.2769 \times 10^{-86675} $&$ 1.1086 \times 10^{-7} $&$ 6.7055 \times 10^{-8} $&$ 2.3039 \times 10^{-10} $&$ 1.6719 \times 10^{-6} $\\
$ 900000 $&$ 2.2425 \times 10^{-34} $&$ 1.8566 \times 10^{-97519} $&$ 1.7251 \times 10^{-8} $&$ 1.1148 \times 10^{-8} $&$ 3.5798 \times 10^{-11} $&$ 2.7793 \times 10^{-7} $\\
$ 10^{6} $&$ 1.1919 \times 10^{-36} $&$ 1.7014 \times 10^{-108364} $&$ 2.9225 \times 10^{-9} $&$ 2.0204 \times 10^{-9} $&$ 6.1048 \times 10^{-12} $&$ 5.0366 \times 10^{-8} $\\
$ 10^{7} $&$ 5.2416 \times 10^{-146} $&$ 1.4673 \times 10^{-1084939} $&$ 1.3304 \times 10^{-44} $&$ 8.6478 \times 10^{-43} $&$ 6.3772 \times 10^{-46} $&$ 2.1544 \times 10^{-41} $\\
$ 10^{8} $&$ 4.7589 \times 10^{-573} $&$ 4.3169 \times 10^{-10854290} $&$ 1.6432 \times 10^{-167} $&$ 1.1209 \times 10^{-158} $&$ 1.8376 \times 10^{-162} $&$ 2.7919 \times 10^{-157} $\\
$ 10^{9} $&$ 2.1610 \times 10^{-2242} $&$ 2.3662 \times 10^{-108561701} $&$ 7.1481 \times 10^{-591} $&$ 1.3512 \times 10^{-559} $&$ 4.3630 \times 10^{-564} $&$ 3.3654 \times 10^{-558} $\\
$ 10^{10} $&$ 3.4154 \times 10^{-8777} $&$ 4.8914 \times 10^{-1085689794} $&$ 3.0850 \times 10^{-2061} $&$ 7.4332 \times 10^{-1959} $&$ 3.9908 \times 10^{-1964} $&$ 1.8514 \times 10^{-1957} $\\
$ 10^{11} $&$ 4.3381 \times 10^{-34392} $&$ 2.0646 \times 10^{-10857180898} $&$ 7.3356 \times 10^{-7227} $&$ 3.6987 \times 10^{-6896} $&$ 2.5742 \times 10^{-6902} $&$ 9.2121 \times 10^{-6895} $\\
$ 10^{12} $&$ 1.5812 \times 10^{-134912} $&$ 7.4334 \times 10^{-108572912127} $&$ 2.9829 \times 10^{-25573} $&$ 1.3455 \times 10^{-24496} $&$ 8.3382 \times 10^{-24504} $&$ 3.3510 \times 10^{-24495} $\\
$ 10^{13} $&$ 4.8682 \times 10^{-529779} $&$ 4.8824 \times 10^{-1085733430790} $&$ 2.2398 \times 10^{-91356} $&$ 2.8059 \times 10^{-87802} $&$ 8.7224 \times 10^{-87811} $&$ 6.9885 \times 10^{-87801} $\\
$ 10^{14} $&$ 8.7181 \times 10^{-2082251} $&$ 1.0300 \times 10^{-10857351170906} $&$ 3.3360 \times 10^{-329167} $&$ 3.7192 \times 10^{-317265} $&$ 2.3984 \times 10^{-317275} $&$ 9.2631 \times 10^{-317264} $\\
$ 10^{15} $&$ 1.8324 \times 10^{-8190656} $&$ 1.7148 \times 10^{-108573577783072} $&$ 2.4206 \times 10^{-1194944} $&$ 4.6112 \times 10^{-1154558} $&$ 1.5786 \times 10^{-1154570} $&$ 1.1485 \times 10^{-1154556} $\\
\hline
\end{tabular}
}
\end{center}
\end{table}

\end{document}